\newtheorem{lemma}{Lemma}
\newtheorem{proposition}{Proposition}
\newtheorem{theorem}{Theorem}
\newtheorem{corollary}{Corollary}
\newtheorem{remark}{Remark}
\newcommand{\norm}[1]{\left\|#1 \right\|}
\newcommand{\tnorm}[1]{|\!|\!| #1|\!|\!|}
\newcommand{\jump}[1]{\llbracket #1 \rrbracket }
\def\R{\mathbb R}
\def\Pe{\operatorname{Pe}}
\begin{document}

\title[Stabilized FEM for ill-posed convection--diffusion problems. II]{A stabilized finite element method for inverse problems subject to the convection--diffusion equation. II: convection-dominated regime}

\author{Erik Burman}
\author{Mihai Nechita}
\author{Lauri Oksanen}

\address{Department of Mathematics, University College London, Gower Street, London UK, WC1E 6BT.}
\email{\{e.burman, mihai.nechita.16, l.oksanen \}@ucl.ac.uk}

\date{\today}

\begin{abstract}
We consider the numerical approximation of the ill-posed data assimilation problem for stationary convection--diffusion equations and extend our previous analysis in [\emph{Numer. Math.} 144, 451--477, 2020] to the convection-dominated regime. Slightly adjusting the stabilized finite element method proposed for dominant diffusion, we draw upon a local error analysis to obtain quasi-optimal convergence along the characteristics of the convective field through the data set. The weight function multiplying the discrete solution is taken to be Lipschitz continuous and a corresponding super approximation result (discrete commutator property) is proven. The effect of data perturbations is included in the analysis and we conclude the paper with some numerical experiments.
\end{abstract}	

\maketitle
\section{Introduction}
\label{section:intro}
In this work, we consider a data assimilation problem for a stationary convection--diffusion equation
\begin{equation}\label{eq:intro_model_problem}
\mathcal{L} u := - \mu \Delta u + \beta \cdot \nabla  u = f \quad \text{in } \Omega\subset \R^n,
\end{equation}
when convection dominates, that is $0 < \mu \ll |\beta|$, and complement the diffusion-dominated case discussed in the first part \cite{BNO20}.
We assume that $\Omega \subset \R^n$ is open, bounded and connected, and there exists a solution $u\in H^2(\Omega)$ to \eqref{eq:intro_model_problem}.
The problem under study is to approximate the solution $u$ given the source $f$ in $\Omega$ and the perturbed restriction $\tilde U_\omega = u\vert_{\omega} + \delta$ of the solution to an open subset $\omega \subset \Omega$. The perturbation $\delta$ is taken in $L^2(\omega)$. Notice that we consider no boundary conditions on $\partial \Omega$. This is a linear ill-posed problem also known as unique continuation.

To start with, let us briefly recall the main results obtained in the first part. Consider an open bounded set $B\subset \Omega$ that contains the data region $\omega$ such that $B\setminus \omega$ does not touch the boundary of $\Omega$. For $u\in H^1(\Omega)$, the following conditional stability estimate was proven for $\mu>0$ and $\beta \in L^{\infty}(\Omega)^n$,
\begin{equation}\label{eq:continuum_stability}
\norm{u}_{L^2(B)} \le C_{st} \left( \norm{u}_{L^2(\omega)} + \tfrac{1}{\mu} \norm{\mathcal{L} u}_{H^{-1}(\Omega)} \right)^\kappa
\norm{u}_{L^2(\Omega)}^{1-\kappa},
\end{equation}
where the H\"older exponent $\kappa \in (0,1)$ depends only on the geometric setting. In the case of simple geometric configurations, e.g. when $\omega,\, B,\, \Omega$ are three concentric balls, the exponent $\kappa \in (0,1)$ can be given explicitly, see \cite[Remark 1]{BNO20}.
The stability constant $C_{st}$ is given explicitly in terms of the physical parameters
\begin{equation}\label{eq:cstab}
C_{st} = C_1 \exp\left(C_2 (1 + \tfrac{|\beta|}{\mu})^2\right), \quad |\beta| := \norm{\beta}_{L^\infty(\Omega)^n},
\end{equation}
with constants $C_1,\, C_2 > 0$ depending only on the geometry.
Note that the continuum estimate \eqref{eq:continuum_stability} is valid in both the diffusion-dominated and convection-dominated regimes, and that the stability constant $C_{st}$ is uniformly bounded when diffusion dominates. However, when convection dominates $C_{st}$ grows exponentially, rendering the stability estimate ineffective in practice.
We also recall that for global unique continuation from $\omega$ to the entire $\Omega$ the stability is no longer H\"older, but logarithmic, that is the modulus of continuity for the given data is not $|\cdot|^\kappa$ any more, but $|\log(\cdot)|^{-\kappa}$.

On the discrete level, the continuum estimate \eqref{eq:continuum_stability} was combined with a stabilized linear finite element method to obtain convergence orders for the approximate solution.
More precisely, for a mesh size $h$, and defining the P\'eclet number 
$$Pe(h) := \frac{|\beta| h}{\mu},$$
the following error bound \cite[Theorem 1]{BNO20} was proven for the approximation $u_h$ in the diffusive regime $Pe(h)<1$,
\begin{equation}
\|u-u_h\|_{L^2(B)} \le C_{st}\, h^{\kappa} (\|u\|_{H^2(\Omega)} + h^{-1} \|\delta\|_{L^2(\omega)}),
\end{equation}
where the convergence order $\kappa \in (0,1)$ is the same as the H\"older exponent in \eqref{eq:continuum_stability} and the stability constant $C_{st}$ is proportional to the one in \eqref{eq:cstab}.
Under an additional assumption on the divergence of the convective field $\beta$, similar error bounds were also proven in the $H^1$-norm, see \cite[Theorem 2]{BNO20}. 

The prototypical effect of dominating diffusion is shown in \cref{fig:contour_diffusion}, where the problem is set in the unit square and contour error plots are shown for data assimilation from a centered disk of radius 0.1. One can notice oscillating errors that grow in size away from the data region towards the boundary. The exact solution in this example is $u=2\sin(5\pi x)\sin(5\pi y)$ where the factor 2 is taken to make its $L^2$-norm unitary. For the computation we used an unstructured mesh with 512 elements on a side and mesh size $h\approx0.0025$. 

\begin{figure}[h]
	\includegraphics[width=0.75\columnwidth]{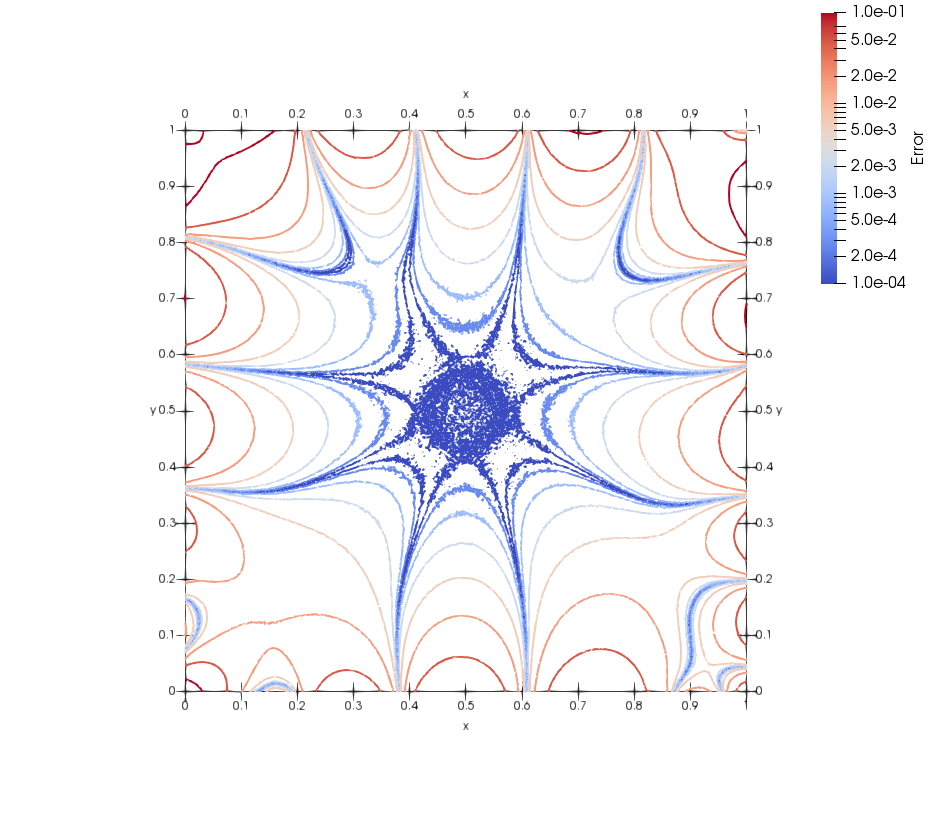}
	\caption{Absolute error contour plot in the diffusion-dominated case, $\mu = 1,\, \beta=(1,0)$. The domain is the unit square, data given in a centered disk of radius 0.1 for the exact solution $u=2\sin(5\pi x)\sin(5\pi y)$.}
	\label{fig:contour_diffusion}
\end{figure}

\subsection{Objective and main results}
We consider a stabilized finite element method for data assimilation subject to the convection--diffusion equation in the convection-dominated regime.
Since the behaviour of the physical system changes fundamentally when convection dominates and
$$Pe(h) \gg 1,$$
the goal of this second part paper is to reconsider the numerical method proposed in the first part \cite{BNO20} and develop an error analysis that captures and exploits the governing transport phenomenon.
This is illustrated in \cref{fig:contour_convection} where the transition to the convection-dominated regime through an intermediate regime is made by decreasing the diffusion coefficient $\mu$.
We aim to obtain sharper local error estimates along the characteristics of the convective field through the data region.
Even though the error analysis that we perform herein is different in nature to the one in the first part, the numerical method itself is only slightly changed (see \cref{rem:fem_penalty} below).
For the error localization technique we draw on ideas used for the streamline diffusion method in \cite{JNP84}, continuous interior penalty in \cite{BGL09}, and non-coercive hyperbolic problems in \cite{Bur14a}.

From the definition of the P\'eclet number we see that the regime will also depend on the resolution of the computation besides the physical parameters.
We can therefore expect the method to change behaviour as the resolution increases and $Pe(h)$ decreases.
This phenomenon was already observed computationally in \cite{Bur13} and can now be explained theoretically.

\begin{figure}[h]
	\begin{subfigure}{0.48\textwidth}
		\includegraphics[draft=false, width=\textwidth]{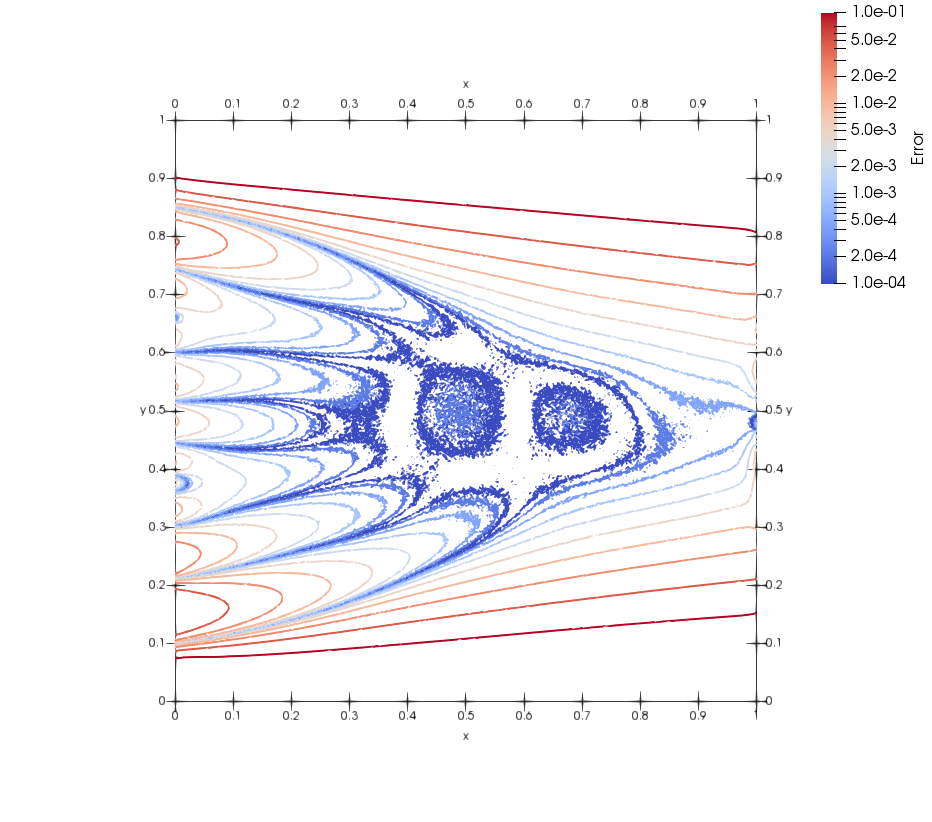}
		\caption{$\mu = 10^{-2}$.}
	\end{subfigure}
	\hfill
	\begin{subfigure}{0.48\textwidth}
		\includegraphics[draft=false, width=\textwidth]{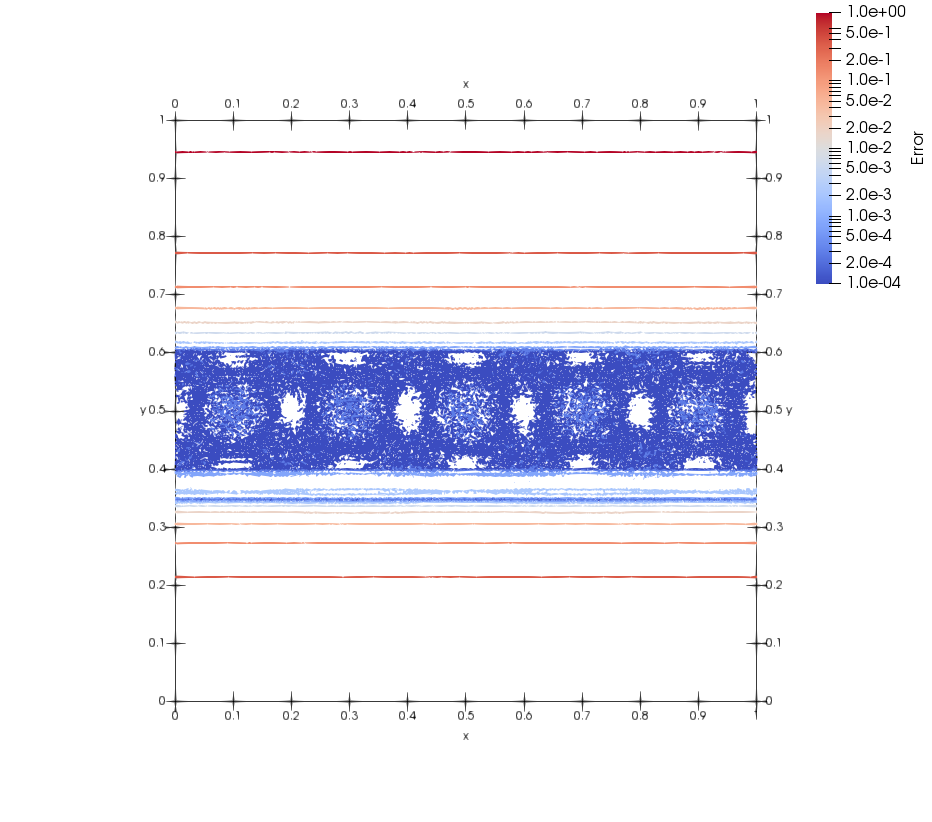}
		\caption{$\mu = 10^{-6}$.}
	\end{subfigure}
	\caption{Absolute error contour plot when convection becomes dominant, $\beta=(1,0)$. The domain is the unit square, data given in a centered disk of radius 0.1 for the exact solution $u=2\sin(5\pi x)\sin(5\pi y)$.}
	\label{fig:contour_convection}
\end{figure}

To make the presentation as simple as possible we consider a model case in the unit square $\Omega$ with constant convection
$$\beta := (\beta_1,0), \quad \beta_1\in \mathbb{R},$$
and the solution given in the subset
$$\omega := (0,x)\times (y^{-}, y^{+}) \text{ with } x > h \text{ and } y^{+} - y^{-} > h.$$
For the subset ${\omega_\beta} \subset \Omega$ covered by the characteristics of $\beta$ that go through $\omega$,
we introduce the stability region ${\mathring \omega_\beta} \subset \omega_\beta$ by cutting off a crosswind layer of width $\mathcal{O}(h^\frac12 |\ln(h)|)$ (see \cref{section_stab_weight} for more details).
We separate the convection-dominated and diffusion-dominated regimes by introducing a constant $Pe_{\lim} > 1$ such that
$$
Pe(h) > Pe_{\lim} > 1.
$$
To reduce the number of constants appearing in the analysis, we will write this as $Pe(h) \gtrsim 1$.
As suggested by \cref{fig:contour_convection}, we expect different results for data assimilation downstream vs upstream in an intermediate regime.
We prove in \cref{thm:error_weighted_downstream} weighted error estimates that for $\beta_1>0$ essentially take the following form
$$\|u - u_h\|_{L^2(\mathring \omega_\beta)} \leq C (|\beta|^{\frac12}
h^{\frac32} |u|_{H^2(\Omega)} + |\beta|^{\frac12} h^{-\frac12} \|\delta\|_{L^2(\omega)}),\text{ when } Pe(h) \gtrsim 1.$$
This is similar to the typical error estimates for piecewise linear stabilized FEMs for convection-dominated well-posed problems, such as local projection stabilization, dG methods, continuous interior penalty or Galerkin least squares.
On general shape-regular meshes these methods have an $\mathcal{O}(h^\frac12)$ gap to the best approximation convergence order. Taking this into account, our result is thus quasi-optimal.
For a recent overview of challenges and open problems in the well-posed case, see e.g. \cite{JKN2018} and \cite{RS15}.

When going against the characteristics, i.e. $\beta_1<0$, we prove in \cref{thm:error_weighted_upstream} first the pre-asymptotic bound
$$
\|u - u_h\|_{L^2(\mathring \omega_\beta)} \leq C ( |\beta|^{\frac12}
h |u|_{H^2(\Omega)} + h^{-1} \|\delta\|_{L^2(\omega)} ), \text{ when } 1 \lesssim Pe(h) < h^{-1},
$$
followed by
$$
\|u - u_h\|_{L^2(\mathring \omega_\beta)} \leq C (|\beta|^{\frac12}
h^{\frac32} |u|_{H^2(\Omega)} + h^{-\frac12} \|\delta\|_{L^2(\omega)}), \text{ when } Pe(h) > h^{-1}.
$$
It follows that when solving the data assimilation problem against the flow, the diffusivity reduces the convergence order in an
intermediate regime. Only for very small diffusion coefficients $\mu < |\beta| h^2$ do we get quasi-optimal bounds. This asymmetry of the error distribution for moderate P\'eclet numbers is clearly visible in the left plot of \cref{fig:contour_convection}.

Previous results on optimal control for stabilized convection--diffusion equations include \cite{BV07,DQ05,HYZ09,YZ09}. We refer to the first part \cite{BNO20} for a more detailed discussion.

In terms of notation, above and throughout the paper $C$ denotes a generic positive constant, not necessarily the same at each occurrence, that is independent of the coefficients $\mu,\, \beta$ and the mesh size $h$.

\section{Discrete setting}
Let $V_h\subset H^1(\Omega)$ be the conforming finite element space of piecewise affine $\mathbb{P}_1$ functions defined on a computational mesh $\mathcal{T}_h$ that consists
of shape-regular triangular elements $K$ with diameter $h_K$. The mesh size $h$ is the maximum over $h_K$ and we will assume that $h<1$.
The interior faces of all the elements are collected in the set $\mathcal{F}_i$ and the jump of a quantity across such a face $F$ is denoted by $\jump{\cdot}_F$, omitting the subscript whenever the context is clear. We denote by $n$ the unit normal.

First we introduce the standard inner products with the induced norms
$$
(v_h,w_h)_\Xi := \int_{\Xi} v_h w_h ~\mbox{d} x, \quad
\left<v_h,w_h\right>_{\partial \Xi} := \int_{\partial \Xi} v_h w_h ~\mbox{d} s,
$$
and the bilinear form in the weak formulation of \eqref{eq:intro_model_problem}
$$
a_h(v_h,w_h) := (\beta \cdot \nabla  v_h, w_h )_\Omega+
( \mu \nabla v_h , \nabla w_h  )_{\Omega}-
\left< \mu \nabla v_h \cdot n , w_h \right>_{\partial \Omega}.
$$
We will make use of the stabilizing bilinear forms
$$
 s_{\Omega}(v_h,w_h):= \gamma \sum_{F \in \mathcal{F}_i} \int_{F} h (\mu + |\beta|h) \jump{\nabla v_h \cdot n} \cdot \jump{\nabla w_h \cdot n}~\mbox{d}s,
$$
which will act on the discrete solution penalizing the jumps of its gradient across interior faces, and 
$$
s_*(v_h,w_h) :=\gamma_* \left( \left< (|\beta|+ \mu h^{-1})v_h, w_h \right>_{\partial \Omega}+ (\mu \nabla v_h, \nabla w_h )_\Omega+ s_\Omega(v_h,w_h) \right),
$$
where $\gamma$ and $\gamma_*$ are positive constants that can be heuristically chosen at implementation. They do not play a role in the convergence of the method and most of the time we will include them in the generic constant $C$.
For the data assimilation term we consider the scaled inner product in the data set $\omega$ given by
$$
s_{\omega}(v_h,w_h):=((|\beta| h^{-1} + \mu h^{-\zeta}) v_h, w_h)_{ \omega}, \quad \zeta \in [0,2].
$$
To this we add the stabilizing interior penalty term $s_\Omega$ to define for conciseness
$$
s(v_h,w_h) := s_\Omega(v_h,w_h) + s_{\omega}(v_h,w_h),
$$

The idea behind the computational method follows the discretize-then-optimize approach: we first discretize and then formulate the data assimilation problem as a PDE-constrained optimization problem with additional stabilizing terms. Apart from their stabilizing intake, these terms are also chosen such that they vanish at optimal rates. For an overview on this approach to ill-posed problems and more details on the desired properties of discrete stabilizers, we refer the reader to \cite{Bur13}. To be more precise, for an approximation $u_h \in V_h$ and a discrete Lagrange multiplier $z_h \in V_h$, we consider the functional
\begin{align*}
L_h(u_h,z_h) :={}& \tfrac12 s_\omega (u_h - \tilde U_\omega, u_h - \tilde U_\omega) + a_h(u_h,z_h) - (f,z_h)_\Omega \\
			&+ \tfrac12 s_\Omega(u_h,u_h) - \tfrac12 s_*(z_h,z_h),
\end{align*}
where the first term is measuring the misfit between $u_h$ and the known perturbed restriction $\tilde U_\omega = u\vert_{\omega} + \delta$, the next two terms are imposing the weak form of the PDE \eqref{eq:intro_model_problem} as a constraint, and the last two terms have stabilizing role and act only on the discrete level.

We look for the saddle points of the Lagrangian $L_h$ and analyse their convergence to the exact solution. Using the optimality conditions we obtain the finite element method for data assimilation subject to
\eqref{eq:intro_model_problem}, which reads as follows: find $(u_h,z_h)  \in [V_h]^2$ such that
\begin{equation}\label{eq:method_FEM}
\left\{\begin{array}{rcl}
a_h(u_h,w_h) - s_*(z_h,w_h) &=& (f,w_h)_\Omega\\
a_h(v_h,z_h) + s(u_h,v_h) &=&s_{\omega}(\tilde U_\omega,v_h)
\end{array} \right.
,\quad \forall (v_h,w_h)  \in [V_h]^2.
\end{equation}
Notice that the exact solution $u\in H^2(\Omega)$ (with noise $\delta \equiv 0$) and the dual variable $z \equiv 0$ satisfy \eqref{eq:method_FEM} since the gradient of the exact solution has no jumps across interior faces. Hence the Lagrange multiplier $z_h$ should converge to zero.

\begin{remark}\label{rem:fem_penalty}
	The same finite element method \eqref{eq:method_FEM} has been proposed in the first part \cite{BNO20} for the diffusion-dominated case; $s_\Omega$ and $s_*$ are exactly the stabilizing terms introduced there. However, herein we have increased the penalty coefficient in the data term $s_\omega$ from $|\beta| h + \mu$ to $|\beta| h^{-1} + \mu h^{-\zeta}$. We note that the bounds in \cite{BNO20} hold also for this stronger penalty term, but the sensitivity to perturbations in data increases by a factor of $h^{-1}$.
\end{remark}

\begin{proposition}\label{prop:unique}
	The finite element method \eqref{eq:method_FEM} has a unique solution  $(u_h,z_h) \in [V_h]^2$ and the Euclidean condition number $\mathcal{K}_2$ of the system matrix  satisfies $$\mathcal{K}_2 \le C h^{-4}.$$ 
	\begin{proof}
		The proof given in \cite[Proposition 2]{BNO20} holds verbatim.
	\end{proof}
\end{proposition}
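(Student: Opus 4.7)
The plan is to exploit the square structure of the linear system in \eqref{eq:method_FEM}. Since the trial and test space is $[V_h]^2$ on both sides, uniqueness implies existence, so it suffices to show that the homogeneous problem (taking $f=0$ and $\tilde U_\omega=0$) admits only the trivial solution. The standard saddle-point trick is to test the first equation with $w_h = z_h$ and the second with $v_h = u_h$, then subtract. The mixed term $a_h(u_h,z_h)$ cancels and one is left with the orthogonality identity
\[
s(u_h,u_h) + s_*(z_h,z_h) = 0.
\]

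From this identity I would extract three consequences, each of which isolates one building block of $s$ and $s_*$. First, the boundary term $\gamma_*\langle (|\beta|+\mu h^{-1})z_h, z_h\rangle_{\partial\Omega}$ together with $\gamma_*\mu\|\nabla z_h\|_{L^2(\Omega)}^2$ inside $s_*$ forces $z_h$ to be constant on $\Omega$ (using $\mu>0$) and to vanish on $\partial\Omega$, hence $z_h\equiv 0$. Second, the scaled data term $s_\omega(u_h,u_h)$ vanishes, which forces $u_h\equiv 0$ on $\omega$ since the weight $|\beta|h^{-1}+\mu h^{-\zeta}$ is strictly positive. Third, the interior penalty contribution $s_\Omega(u_h,u_h)=0$ yields $\jump{\nabla u_h\cdot n}=0$ on every interior face $F\in\mathcal F_i$.

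The key finishing step is then a purely discrete unique continuation: because $u_h$ is $\mathbb P_1$ and has no jumps in its normal gradient, the affine pieces on adjacent triangles must coincide across their shared face, and mesh connectivity propagates this to show that $u_h$ is a single globally affine function on $\Omega$. Such a function that vanishes on the nontrivial open set $\omega$ must be identically zero. Plugging $u_h=0$ and $z_h=0$ back into \eqref{eq:method_FEM} confirms the homogeneous system has only the trivial solution, so \eqref{eq:method_FEM} is uniquely solvable for arbitrary data.

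For the condition number, I would pass to the matrix formulation and estimate $\|\mathcal A\|_2$ and $\|\mathcal A^{-1}\|_2$ separately. The upper bound $\|\mathcal A\|_2\le Ch^{-2}$ follows from standard inverse inequalities applied to $a_h$, $s_\Omega$, $s_*$ and $s_\omega$ (where the weight $h^{-1}$ in $s_\omega$ already signals the extra power of $h^{-1}$ compared to the diffusion-dominated case). The lower bound requires revisiting the stability argument above quantitatively: one tracks the constants arising when chaining the identity $s(u_h,u_h)+s_*(z_h,z_h)=0$ with the discrete unique continuation, using a Poincaré-type estimate on $V_h$ together with the inverse inequality, to show that the smallest singular value is at least $Ch^{2}$. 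Multiplying the two bounds yields $\mathcal K_2\le Ch^{-4}$. The main obstacle I expect is the discrete unique continuation step: making sure that the zero-jump conclusion, which only gives an $H^2$-conforming piecewise affine function, combines correctly with the mesh connectivity to produce a globally affine function, and that the argument is uniform enough in $h$ to feed quantitatively into the condition number estimate.
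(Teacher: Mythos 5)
Your uniqueness argument is correct and is essentially the one behind the cited proof \cite[Proposition 2]{BNO20}: the system is square, so it suffices to treat the homogeneous problem; testing with $(v_h,w_h)=(u_h,-z_h)$ cancels the off-diagonal terms and yields $s(u_h,u_h)+s_*(z_h,z_h)=0$; positivity of each constituent then gives $z_h\equiv 0$ (constant because $\mu\|\nabla z_h\|_{\Omega}^2=0$ with $\mu>0$, and zero because of the boundary term) and $u_h=0$ in $\omega$ together with $\jump{\nabla u_h\cdot n}=0$ on every interior face. Your finishing step is also sound: for a continuous piecewise affine function the tangential derivative is automatically continuous across a face, so vanishing normal jumps force the two constant gradients, and hence the two affine pieces, to coincide on neighbouring elements; connectivity of the mesh makes $u_h$ globally affine, and an affine function vanishing on the nonempty open set $\omega$ is identically zero.

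The genuine gap is in the condition number estimate, and it sits exactly where you flag it: the qualitative unique-continuation step does not by itself yield a lower bound on the smallest singular value that is uniform in $h$, and you never actually produce one. What closes the gap is the discrete Poincar\'e inequality \eqref{eq:Poincare} (scaled from [BHL18, Lemma 2]), which is precisely the quantitative form of your ``zero jumps plus zero on $\omega$ implies zero'' observation: it gives $\|v_h\|_{H^1(\Omega)}\le C(\mu^{\frac12}h+|\beta|^{\frac12}h^{\frac32})^{-1}\,s(v_h,v_h)^{\frac12}$, and a Friedrichs-type inequality using the gradient and boundary terms of $s_*$ gives the analogous control of $\|z_h\|_{L^2(\Omega)}$ by $s_*(z_h,z_h)^{\frac12}$. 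Testing with $(u_h,-z_h)$ as above then bounds the full bilinear form from below by $c\,h^{2}\|(u_h,z_h)\|_{L^2(\Omega)}^2$ (up to factors involving $\mu$ and $|\beta|$), which combined with the $\mathcal{O}(h^{-2})$ continuity bound from inverse inequalities and the mass-matrix norm equivalence yields $\mathcal{K}_2\le C h^{-4}$. Note finally that the strengthened data penalty $|\beta|h^{-1}+\mu h^{-\zeta}$ with $\zeta\le 2$ is of the same order as the inverse-inequality bounds already produced by $a_h$, so the continuity constant is unaffected; this is why the paper can invoke the proof of \cite[Proposition 2]{BNO20} verbatim, whereas your remark about an ``extra power of $h^{-1}$'' in the matrix norm is a red herring.
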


\subsection{Stability region and weight functions}\label{section_stab_weight}

We will exploit the convective term of the PDE to obtain stability in the zone that connects through characteristics to the data region $\omega$.
The objective is to obtain weighted $L^2$-estimates in this region that are independent of $\mu$ (but not of the regularity of the exact solution) with the underlying assumption that $\mu \ll |\beta|$.
To this end we first define the subdomain where we can obtain stability (see \cref{fig:ex_domains} for a sketch) and some weight functions that will be used to define weighted norms.
These can be given in explicit form in the simple framework where $\beta = (\beta_1,0)$ and
$$\omega := (0,x)\times (y^{-}, y^{+}) \text{ with } x > h \text{ and } y^{+} - y^{-} > h.$$ 

Let $\omega_\beta$ denote the closed set of all the points $p \in \bar\Omega$ that can be reached through characteristics from $\omega$,
i.e. for which there exists $s \in \mathbb{R}$ such that $p+s\beta \in \partial\omega$. Similarly to the classical work \cite{JNP84}, we define the stability region ${\mathring \omega_\beta}$ by cutting off a crosswind layer from $\omega_\beta$, namely
\begin{equation}\label{eq:stability_region}
{\mathring \omega_\beta} := \{p \in {\omega_\beta}:\, \mbox{dist}(p,\Omega \setminus \omega_\beta) \ge c_\lambda h^\frac12 \ln(1/h)\},
\end{equation}
with the constant $c_\lambda$ to be made precise in the following. In our setting, we simply have that ${\mathring \omega_\beta} = [0,1]\times [\mathring y^{-}, \mathring y^{+}]$ for some $\mathring y^{+} > \mathring y^{-} > 0$.
The crosswind layer and its width are motivated by the subsequent construction of weight functions with a specific decay outside $\omega_\beta$.

\begin{figure}
	\resizebox{0.66\textwidth}{!}{
		\centering
		\begin{tikzpicture}
		\draw (0,0) rectangle (10,10);
		\draw (9.5,9.5) node[align=center] {\Huge $\Omega$};
		
		\filldraw[color=gray] (0,4) rectangle (2,7);
		\filldraw[pattern=north east lines] (0,4.5) rectangle (10,6.5);
		
		\draw (0,0) node[align=center, below] {\Large 0};
		\draw (2,0) node[align=center, below] {\Huge $x$};
		\draw (0,4) node[align=center, left] {\Huge $y^-$};
		\draw (0,7) node[align=center, left] {\Huge $y^+$};
		\draw (10,4.5) node[align=center, right] {\Huge $\mathring y^-$};
		\draw (10,6.5) node[align=center, right] {\Huge $\mathring y^+$};
		
		\draw (1,8) node[align=center, above] {\huge $(\beta_1,0)$};
		
		\draw[dashed] (0,4) -- (10,4);
		\draw[dashed] (0,7) -- (10,7);
		\draw[dashed] (2,0) -- (2,10);
		
		\draw[decorate, decoration={brace, raise=2pt}, thick] (2,7) -- (2,6.5);
		\draw (2.1,6.75) node[align=center, right] {$\mathcal{O}(h^\frac12 \log(1/h))$};
		
		\foreach \x in {0.5, 2.5, 4.5, 6.5, 8.5} {
			\draw[->,thick] (\x,1.3) -- +(1,0);
			\draw[->,thick] (\x,2.6) -- +(1,0);
			\draw[->,thick] (\x,9) -- ++(1,0);
			\draw[->,thick] (\x,8) -- ++(1,0);
		}
		\foreach \x in {2.5, 4.5, 6.5, 8.5} {
			\draw[->,thick] (\x,5.5) -- +(1,0);
		}
		\end{tikzpicture}}
	\caption{Data set $\omega$ (gray) and the stability region ${\mathring \omega_\beta}$ (hatched).}
	\label{fig:ex_domains}
\end{figure}
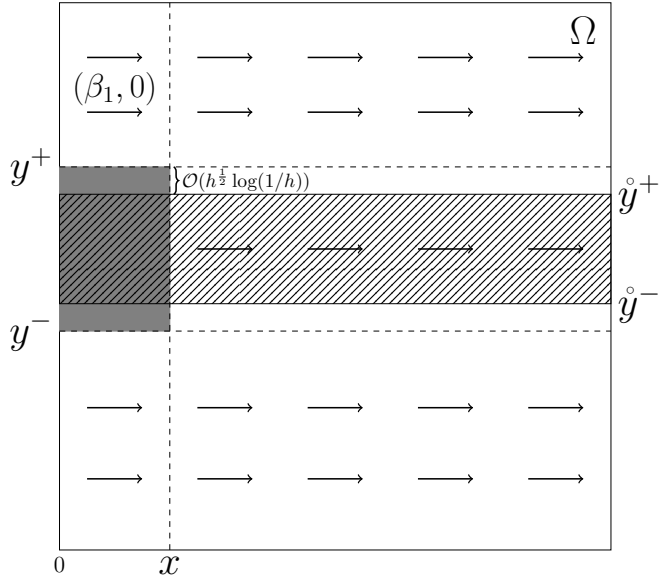

We will consider different weight functions depending on the direction of the convection field. In the downstream case we let
$$
\psi_1(x,y):= e^{-x}, \text{ when } \beta_1 >0,
$$
and in the upstream case
$$
\psi_1(x,y):= -e^{-x}, \text{ when } \beta_1 <0.
$$
In both cases we have that $\nabla \psi_1 = (-\psi_1, 0)$. Let then $\psi_2\in W^{1,\infty}(\Omega)$ be a function satisfying
\begin{equation}\label{eq:weight_psi2}
\psi_2=\left\{ \begin{array}{ll}
1, & \text{ in } \mathring \omega_\beta \\
\mathcal{O}(h^{3}), & \text{ in } \Omega \setminus \omega_\beta
\end{array}
\right.
,\quad \beta \cdot \nabla \psi_2=0, \quad |\nabla \psi_2| \leq C h^{-\frac12}.
\end{equation}
Such a function can be obtained by taking a positive constant $\lambda$ that will be specified later and letting
$$
\psi_2(x,y) := \left\{ 
\begin{array}{ll} 
\exp((\mathring y^{+} - y)/(\lambda h^\frac12)), & y > \mathring y^{+} \\
1, & (x,y) \in  {\mathring \omega_\beta} \\
\exp((y - \mathring y^{-})/(\lambda h^\frac12)), & y < \mathring y^{-}.
\end{array} 
\right.
$$
Note that $\psi_2$ is only piecewise continuously differentiable. For $\psi_2$ to decrease sufficiently rapidly to $\mathcal{O}(h^3)$ outside of $\omega_\beta$, we can take
$$
\mbox{dist}({\mathring \omega_\beta},\Omega \setminus \omega_\beta) = \min(y^{+} - \mathring y^{+}, \mathring y^{-} - y^{-})
\ge 3 \lambda h^\frac12 \ln(1/h),
$$
which corresponds to $c_\lambda = 3\lambda$ in the definition of $\mathring \omega_\beta$ given in \eqref{eq:stability_region}.
We thus have that
$$|\nabla \psi_2| \leq \lambda^{-1} h^{-\frac12},$$
and in the subsequent proofs the constant $\lambda$ will be taken large enough.

We now define the weight function $\varphi \in W^{1,\infty}(\Omega)$ that will be used in the weighted norms. For the downstream case we take in \cref{sec:error_estimates_downstream}
\begin{equation}\label{eq:weight_down}
\varphi:= \psi_1 \psi_2 \in (0,1), \text{ when } \beta_1 >0,
\end{equation}
and for the upstream case in \cref{sec:error_estimates_upstream},
\begin{equation}\label{eq:weight_up}
\varphi:= \psi_1 \psi_2 \in (-1,0), \text{ when } \beta_1 <0.
\end{equation}
Using the product rule and the fact that $\beta \cdot \nabla \psi_2=0$, it follows that in both cases we have
\begin{equation}\label{eq:beta_grad_weight}
\beta \cdot \nabla \varphi = -|\beta| |\varphi|,
\end{equation}
and
\begin{equation}\label{eq:grad_weight}
|\nabla \varphi| \leq (1 + \lambda^{-1} h^{-\frac12}) |\varphi|.
\end{equation}

We will denote the inflow and outflow boundaries by $\partial \Omega^-$ and $\partial \Omega^+$, i.e. $\beta \cdot n < 0$ on $\partial \Omega^-$ and $\beta \cdot n > 0$ on $\partial \Omega^+$. 
We will also assume that $\beta \cdot n=0$ can only hold on the boundary of $\Omega \setminus \omega_\beta$, and that $\mu \le \Pe^{-1}_{\lim} |\beta \cdot n|h$ when $\beta \cdot n \neq 0$.
This is straightforward to verify in the model case of the unit square that we are considering.

\section{Preliminaries and the discrete commutator property}
We first collect several inequalities that will be used repeatedly.
We recall the standard discrete inverse inequality
\begin{equation}\label{eq:inverse_ineq}
\| \nabla v_h\|_{L^2(K)} \le C h^{-1} \|v_h\|_{L^2(K)},\quad \forall v_h \in \mathbb{P}_1(K),
\end{equation}
see e.g. \cite[Lemma 1.138]{EG04}, the continuous trace inequality
\begin{equation}\label{eq:trace_cont}
\|v\|_{L^2(\partial K)} \le C( h^{-\frac12} \|v\|_{L^2(K)} + h^{\frac12} \|\nabla v\|_{L^2(K)}),\quad \forall v \in H^1(K),
\end{equation}
see e.g. \cite{MS99}, and the discrete trace inequality
\begin{equation}\label{eq:trace_grad}
\|\nabla v_h \cdot n\|_{L^2(\partial K)} \le C h^{-\frac12} \|\nabla v_h\|_{L^2(K)},\quad \forall v_h \in \mathbb{P}_1(K).
\end{equation}
We will use standard estimates for the $L^2$-projection $\pi_h:L^2(\Omega) \mapsto V_h$, namely
\begin{equation*}
	\norm{\pi_h u}_{H^m(\Omega)} \le C \norm{u}_{H^m(\Omega)},\quad u\in H^m(\Omega),\, m=0,1,
\end{equation*}
\begin{equation*}
\norm{u-\pi_h u}_{H^m(\Omega)} \le C h^{k-m} \norm{u}_{H^k(\Omega)},\quad u\in H^k(\Omega),\, k=1,2.
\end{equation*}
Scaling the result in \cite[Lemma 2]{BHL18} we recall the Poincar\'e-type inequality
\begin{equation}\label{eq:Poincare}
\|(\mu^{\frac12} h + |\beta|^{\frac12} h^{\frac32}) v_h\|_{H^1(\Omega)} \leq C \gamma^{-\frac12} s(v_h,v_h)^{\frac12}, \quad \forall v_h \in V_h.
\end{equation}
Using \eqref{eq:trace_cont} and approximation estimates we also have the jump inequality
\begin{equation}\label{eq:jumpineq}
s_\Omega(\pi_h u,\pi_h u)^{\frac12} \le C \gamma^{\frac12} (\mu^{\frac12} h + |\beta|^{\frac12} h^{\frac32})|u|_{H^2(\Omega)},\quad \forall u\in H^2(\Omega).
\end{equation}

We also recall that for a Lipschitz domain $K$ -- and hence for any element $K\in \mathcal{T}_h$ -- a function $\varphi$ is Lipschitz continuous if and only if $\varphi \in W^{1,\infty}(K)$. This follows from the proof in \cite[Theorem 4, p. 294]{Eva10} where the extension operator in the third step of the proof is replaced by the extension operator in \cite[Theorem 5, p. 181]{Stein70}. This equivalence holds for more general domains satisfying the minimal smoothness property in \cite[p. 189]{Stein70}. The proof in \cite[Theorem 4, p. 294]{Eva10}	also shows that the mean value theorem holds and for any $x,y\in K$,
\begin{align}\label{eq:mean_value}
|\varphi(x) - \varphi(y)| \le C_{ex} h_K |\varphi|_{W^{1,\infty}(K)},
\end{align}
where $|\varphi|_{W^{1,\infty}(K)} := \|\nabla \varphi\|_{\infty,K}$ and the constant $C_{ex}>0$ is the norm of the extension operator used.

\begin{lemma}\label{lem:weight}
	For all $v_h \in V_h$ and $K \in \mathcal{T}_h$, the following inequalities hold
	\begin{equation}\label{eq:lem_weight_1}
	\|\varphi\|_{\infty,K} \|v_h\|_K \leq C \| v_h \varphi \|_K,
	\end{equation}
	\begin{equation}\label{eq:lem_weight_trace}
	\| v_h \varphi \|_{\partial K} \leq C h^{-\frac12} \| v_h \varphi \|_K,
	\end{equation}
	assuming that $(h + \lambda^{-1} h^{\frac12})$ is small enough.
\end{lemma}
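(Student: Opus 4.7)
The plan is to use the mean value inequality \eqref{eq:mean_value} together with the gradient bound \eqref{eq:grad_weight} to show that $|\varphi|$ does not vary much across a single element, then bootstrap this local quasi-constancy to prove both inequalities.

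For the first inequality, I would begin by picking a point $x^* \in K$ where $|\varphi|$ attains its maximum on the closed element $K$ (which exists by continuity of $\varphi$). By \eqref{eq:mean_value} combined with the pointwise bound $|\nabla \varphi| \le (1 + \lambda^{-1} h^{-\frac12})|\varphi| \le (1 + \lambda^{-1} h^{-\frac12})\|\varphi\|_{\infty,K}$ from \eqref{eq:grad_weight}, for any $x \in K$,
\begin{equation*}
|\varphi(x^*) - \varphi(x)| \le C_{ex} h_K (1 + \lambda^{-1} h^{-\frac12}) \|\varphi\|_{\infty,K} \le C (h + \lambda^{-1} h^{\frac12}) \|\varphi\|_{\infty,K}.
\end{equation*}
Choosing $(h + \lambda^{-1} h^{\frac12})$ small enough that the constant on the right is at most $\tfrac12$ yields $|\varphi(x)| \ge \tfrac12 \|\varphi\|_{\infty,K}$ pointwise on $K$. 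Squaring, multiplying by $v_h^2$, and integrating over $K$ gives $\tfrac14 \|\varphi\|_{\infty,K}^2 \|v_h\|_K^2 \le \|v_h \varphi\|_K^2$, which is \eqref{eq:lem_weight_1}.

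For the second inequality, since $v_h \in \mathbb{P}_1(K)$ is smooth on $K$ and $\varphi \in W^{1,\infty}(K) \subset H^1(K)$, their product belongs to $H^1(K)$, so I can apply the continuous trace inequality \eqref{eq:trace_cont} to $v_h \varphi$:
\begin{equation*}
\|v_h \varphi\|_{\partial K}^2 \le C \bigl( h^{-1} \|v_h \varphi\|_K^2 + h \|\nabla(v_h \varphi)\|_K^2 \bigr).
\end{equation*}
Expanding the product rule, using the inverse inequality \eqref{eq:inverse_ineq} on $v_h$, and invoking $\|\nabla \varphi\|_{\infty,K} \le (1 + \lambda^{-1} h^{-\frac12}) \|\varphi\|_{\infty,K}$, I get
\begin{equation*}
\|\nabla(v_h \varphi)\|_K \le \|\varphi\|_{\infty,K} \|\nabla v_h\|_K + \|\nabla \varphi\|_{\infty,K} \|v_h\|_K \le C h^{-1} \|\varphi\|_{\infty,K} \|v_h\|_K,
\end{equation*}
where in the last step I absorbed $(1 + \lambda^{-1} h^{-\frac12})$ into $C h^{-1}$ (valid once $h$ is small). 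Applying \eqref{eq:lem_weight_1} already proven then gives $h \|\nabla(v_h \varphi)\|_K^2 \le C h^{-1} \|v_h \varphi\|_K^2$, and combining with the first term yields \eqref{eq:lem_weight_trace}.

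The main subtlety is the interplay between the smallness assumption on $(h + \lambda^{-1} h^{\frac12})$ and the blow-up $\lambda^{-1} h^{-\frac12}$ in $\|\nabla \varphi\|_\infty$: the point is that $h_K \cdot \lambda^{-1} h^{-\frac12} = \lambda^{-1} h^{\frac12}$ is exactly the small quantity appearing in the hypothesis, so the pointwise oscillation of $\varphi$ on each element remains controlled despite the crosswind-layer weight's rapid decay. Everything else is a routine combination of the trace, inverse, and mean-value inequalities.
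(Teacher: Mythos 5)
Your proposal is correct and follows essentially the same route as the paper: both parts rest on the mean value inequality \eqref{eq:mean_value} combined with the gradient bound \eqref{eq:grad_weight} to control the oscillation of $\varphi$ on an element, followed by the trace inequality \eqref{eq:trace_cont} and the inverse inequality \eqref{eq:inverse_ineq} for the boundary estimate. The only cosmetic difference is that you derive a pointwise lower bound $|\varphi(x)| \ge \tfrac12 \|\varphi\|_{\infty,K}$ on $K$, whereas the paper applies a triangle inequality in $L^2(K)$ and absorbs the perturbation term; the two are equivalent here.
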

\begin{proof}
	Let $x^* \in K$ be such that
	$|\varphi(x^*)| = \|\varphi\|_{\infty,K}$. Using the triangle inequality we may write
	$$
	\|\varphi\|_{\infty,K} \|v_h\|_K \leq \|\varphi v_h\|_K + \|(\varphi(x^*) - \varphi) v_h\|_K.
	$$
	By the mean value theorem \eqref{eq:mean_value} we have that
	$$
	|\varphi(x^*) - \varphi| \leq C_{ex} h |\varphi|_{W^{1,\infty}(K)},
	$$
	and by \eqref{eq:grad_weight} together with the assumption that $C_{ex}(h + \lambda^{-1} h^{\frac12}) < \frac12$ we get
	$$
	C_{ex} h |\varphi|_{W^{1,\infty}(K)}
	\leq C_{ex} (h +\lambda^{-1} h^{\frac12}) \|\varphi\|_{\infty,K} \leq \frac12 \|\varphi\|_{\infty,K}.
	$$
	It follows that
	$$
	\|\varphi\|_{\infty,K} \|v_h\|_K \leq \|\varphi v_h\|_K + \frac12 \|\varphi\|_{\infty,K}\|v_h\|_K,
	$$
	from which the claim \eqref{eq:lem_weight_1} is immediate.
	Considering now \eqref{eq:lem_weight_trace}, using the standard element-wise trace inequality \eqref{eq:trace_cont} we have
	$$
	\|h^{\frac12} v_h  \varphi\|_{\partial K} \leq C (\|v_h  \varphi\|_{ K} + h \|\nabla (v_h  \varphi)\|_K).
	$$
	We bound the gradient term using \eqref{eq:grad_weight} and the inverse inequality \eqref{eq:inverse_ineq},
	\begin{align*}
	h\|\nabla (v_h \varphi)\|_K &\leq h\|v_h \nabla \varphi\|_K + h\|\varphi \nabla v_h\|_K \\
	& \leq (h + \lambda^{-1} h^{\frac12}) \|\varphi\|_{\infty,K} \|v_h \|_K + C \|\varphi\|_{\infty,K} \|v_h\|_K.
	\end{align*}
	We conclude by collecting the terms and using \eqref{eq:lem_weight_1}.
\end{proof}

\subsection{Discrete commutator property}
We denote by $i_h$ the Lagrange nodal interpolant. We herein consider a Lipschitz weight function and prove the following super approximation result, also known as the discrete commutator property. This result will be essential to derive local weighted estimates and is similar to the one proven in \cite{Ber99} for smooth compactly supported weight functions.
For an introduction to interior estimates we refer the reader to \cite{NS74}.
\begin{lemma}\label{lem:disc_com}
Let $v_h \in V_h$ and $K\in \mathcal{T}_h$. Then for any weight function $\varphi \in W^{1,\infty}(K)$
\begin{equation*}
\|v_h \varphi - i_h (v_h \varphi)\|_K + h \|\nabla(v_h \varphi - i_h (v_h \varphi))\|_K
\leq C h |\varphi|_{W^{1,\infty}(K)} \|v_h\|_K.
\end{equation*}	
\begin{proof}
We will first show the $L^2$-norm estimate
$$
\|v_h \varphi - i_h (v_h \varphi)\|_{K} \leq C h  |\varphi|_{W^{1,\infty}(K)} \|v_h\|_{K}. 
$$
Let $x^* \in K$ be such that $|\varphi(x^*)| = \|\varphi\|_{\infty,K}$ and let $R_\varphi = \varphi - \varphi(x^*)$.
Note that
$$
\|(1-i_h) (v_h \varphi)\|_{K}  = \|(1-i_h) (v_h R_\varphi)\|_{K}.
$$ 
Observe that $i_h (v_h \varphi) = i_h (v_h i_h\varphi)$ and therefore
$$
\|(1-i_h) (v_h R_\varphi)\|_{K}  = \|v_h  R_\varphi - i_h (v_h  i_h R_\varphi)\|_{K}.
$$
By the triangle inequality
$$
\|i_h (v_h i_h R_\varphi) - v_h  R_\varphi\|_{K} \leq \|i_h (v_h i_h R_\varphi) - v_h i_h R_\varphi\|_{K} + \|v_h ( i_h R_\varphi-  R_\varphi)\|_{K}.
$$
For the first term, since $v_h i_h R_\varphi \in H^1(K)$ we have by standard interpolation that
$$
\|i_h (v_h i_h R_\varphi) - v_h i_h R_\varphi\|_{K} \leq C h \|\nabla (v_h i_h R_\varphi)\|_{K}
$$
and then
$$
\|\nabla (v_h i_h R_\varphi)\|_{K} \leq |i_h R_\varphi|_{W^{1,\infty}(K)} \|v_h\|_{K} + \|i_h R_\varphi\|_{\infty,K} \|\nabla v_h\|_{K}.
$$
By inserting $\nabla \varphi$ and $\varphi$, respectively, and using interpolation estimates in $W^{1,\infty}(K)$ \cite[Theorem 1.103]{EG04} and the mean value theorem \eqref{eq:mean_value} we have the following approximation
$$
h |i_h R_\varphi|_{W^{1,\infty}(K)} + \|i_h R_\varphi\|_{\infty,K} \leq C h |\varphi|_{W^{1,\infty}(K)}.
$$
Combined with the previous estimate and the inverse inequality \eqref{eq:inverse_ineq} this gives that
\begin{equation}\label{eq:grad_R}
\|\nabla (v_h i_h R_\varphi)\|_{K} \leq  C|\varphi|_{W^{1,\infty}(K)} \|v_h\|_{K}.
\end{equation}
For the second term, using again interpolation \cite[Theorem 1.103]{EG04} we have
$$
\|v_h ( i_h R_\varphi-  R_\varphi)\|_{K} \leq \|i_h R_\varphi - R_\varphi\|_{\infty,K} \|v_h\|_{K}
\leq C h |\varphi|_{W^{1,\infty}(K)} \|v_h\|_{K},
$$
and thus we have shown that
$$
\|v_h \varphi - i_h (v_h \varphi)\|_{K} \leq C h  |\varphi|_{W^{1,\infty}(K)} \|v_h\|_{K}.
$$
The approximation estimate for the gradient follows by the same arguments. Indeed,
\begin{align*}
\|\nabla(1 - i_h) (v_h \varphi)\|_K &= \|\nabla(1 - i_h) (v_h R_\varphi)\|_K = \|\nabla (v_h R_\varphi) - \nabla i_h(v_h i_h R_\varphi)\|_K \\
&\leq \|\nabla (v_h R_\varphi) - \nabla (v_h i_h R_\varphi)\|_K
+ \|\nabla (v_h i_h R_\varphi) - \nabla i_h(v_h i_h R_\varphi)\|_K.
\end{align*}
We first use interpolation and the inverse inequality \eqref{eq:inverse_ineq} to get
\begin{align*}
\|\nabla (v_h ( R_\varphi - i_h R_\varphi))\|_K
&\leq \|v_h \nabla (R_\varphi - i_h R_\varphi)\|_K + \| (R_\varphi - i_h R_\varphi) \nabla v_h \|_K \\
&\leq C |\varphi|_{W^{1,\infty}(K)} \|v_h\|_{K}.
\end{align*}
Then we use an inverse inequality followed by interpolation and \eqref{eq:grad_R} to obtain
$$
\|\nabla (v_h i_h R_\varphi) - \nabla i_h(v_h i_h R_\varphi)\|_K
\leq C \|\nabla (v_h i_h R_\varphi)\|_{K}
\leq  C|\varphi|_{W^{1,\infty}(K)} \|v_h\|_{K}.
$$
	
\end{proof}
\end{lemma}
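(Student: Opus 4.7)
The plan is to exploit the Lipschitz regularity of $\varphi$ by localizing around a point where $|\varphi|$ attains its maximum on $K$, so that the oscillatory part of $\varphi$ becomes small. Concretely, pick $x^* \in K$ with $|\varphi(x^*)| = \|\varphi\|_{\infty,K}$ and set $R_\varphi := \varphi - \varphi(x^*)$. The mean value property \eqref{eq:mean_value} immediately gives $\|R_\varphi\|_{\infty,K} \le C h |\varphi|_{W^{1,\infty}(K)}$, and $|R_\varphi|_{W^{1,\infty}(K)} = |\varphi|_{W^{1,\infty}(K)}$. The key algebraic observation is that $i_h$ commutes trivially with multiplication by a constant against a $\mathbb{P}_1$ function, so $(1-i_h)(v_h \varphi) = (1-i_h)(v_h R_\varphi)$. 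Moreover, since the nodal interpolant only sees values at nodes, $i_h(v_h R_\varphi) = i_h(v_h \, i_h R_\varphi)$.

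For the $L^2$ estimate, I would insert $v_h \, i_h R_\varphi$ via the triangle inequality:
\begin{equation*}
\|(1-i_h)(v_h R_\varphi)\|_K \le \|v_h(R_\varphi - i_h R_\varphi)\|_K + \|(1-i_h)(v_h \, i_h R_\varphi)\|_K.
\end{equation*}
The first term is controlled by the $W^{1,\infty}$-interpolation estimate on $R_\varphi$, giving $\|R_\varphi - i_h R_\varphi\|_{\infty,K} \le C h |\varphi|_{W^{1,\infty}(K)}$ and hence the desired bound. The second term involves $v_h \, i_h R_\varphi$, which now lies in $H^1(K)$ (in fact piecewise in $W^{1,\infty}$), so standard interpolation applies and one gets $C h \|\nabla(v_h \, i_h R_\varphi)\|_K$. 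Expanding the gradient with the product rule, using $\|i_h R_\varphi\|_{\infty,K} \le C h |\varphi|_{W^{1,\infty}(K)}$, the bound $|i_h R_\varphi|_{W^{1,\infty}(K)} \le C |\varphi|_{W^{1,\infty}(K)}$ from $W^{1,\infty}$ stability of $i_h$, and the inverse inequality \eqref{eq:inverse_ineq} to trade $\|\nabla v_h\|_K$ for $h^{-1}\|v_h\|_K$, produces $\|\nabla(v_h \, i_h R_\varphi)\|_K \le C |\varphi|_{W^{1,\infty}(K)} \|v_h\|_K$, which is the cornerstone of the estimate.

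For the gradient estimate, the same decomposition works: write
\begin{equation*}
\nabla(1-i_h)(v_h R_\varphi) = \nabla\bigl(v_h(R_\varphi - i_h R_\varphi)\bigr) + \nabla(1-i_h)(v_h \, i_h R_\varphi).
\end{equation*}
The first summand is handled directly by the product rule, interpolation bounds on $R_\varphi - i_h R_\varphi$ in $L^\infty$ and on its gradient, and the inverse inequality on $\nabla v_h$. The second summand is handled by an inverse inequality applied to the polynomial $(1-i_h)(v_h \, i_h R_\varphi)$ to reduce it to $\|\nabla(v_h \, i_h R_\varphi)\|_K$, already controlled above.

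The main obstacle is that $v_h \varphi$ is merely $H^1$, not $H^2$, so the usual interpolation estimate $\|u - i_h u\|_K \le C h^2 |u|_{H^2(K)}$ is unavailable — one cannot gain the desired $h^1$ factor directly from regularity. The trick is precisely the insertion of $i_h R_\varphi$: subtracting the value $\varphi(x^*)$ and replacing $R_\varphi$ by its nodal interpolant converts the problem into one where the small factor $h|\varphi|_{W^{1,\infty}(K)}$ is made explicit through $R_\varphi$, while the remaining piece $v_h \, i_h R_\varphi$ is a product of two piecewise smooth polynomials to which standard interpolation and inverse estimates can be applied. Balancing the two resulting contributions is what yields the single factor of $h$ on the right-hand side.
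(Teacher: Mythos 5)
Your proposal is correct and follows essentially the same route as the paper's proof: the same subtraction of $\varphi(x^*)$, the same insertion of $i_h R_\varphi$ and of the intermediate term $v_h\, i_h R_\varphi$, the same use of $W^{1,\infty}$-interpolation, the product rule, and the inverse inequality, for both the $L^2$ and the gradient bounds. The only cosmetic difference is that you invoke $W^{1,\infty}$-stability of $i_h$ where the paper states the combined bound $h\,|i_h R_\varphi|_{W^{1,\infty}(K)} + \|i_h R_\varphi\|_{\infty,K} \leq C h\, |\varphi|_{W^{1,\infty}(K)}$, which amounts to the same thing.
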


\section{A priori local error estimates}\label{sec:error_estimates}
\subsection{Consistency and continuity}
The following consistency result holds exactly as in the diffusion-dominated case, see \cite[Lemma 4]{BNO20}. We give the proof for the sake of completeness.
\begin{lemma}[Consistency]\label{lem:consist}
	Let $u \in H^2(\Omega)$ be the solution to
	\eqref{eq:intro_model_problem} and $(u_h,z_h) \in [V_h]^2$ the solution to \eqref{eq:method_FEM}, then
	$$
	a_h(\pi_h u - u_h,w_h) + s_*(z_h,w_h) = a_h(\pi_h u - u,w_h),
	$$
	and
	$$
	-a_h(v_h,z_h) + s(\pi_h u - u_h,v_h) = s_\Omega(\pi_h u - u,v_h) +
	s_\omega(\pi_h u - \tilde U_\omega,v_h),
	$$
	for all $(v_h,w_h)\in [V_h]^2$.
	\begin{proof}
		The first claim follows from the definition of $a_h$, since
		\begin{equation*}
		a_h(u_h,w_h) - s_*(z_h,w_h) = (f,w_h)_\Omega = (-\mu \Delta u + \beta \cdot \nabla u, w_h)_\Omega = a_h(u,w_h),
		\end{equation*}
		where in the last equality we integrated by parts.
		The second claim follows similarly from
		$$
		a_h(v_h,z_h) + s(u_h,v_h) = 
		s_\omega(\tilde U_\omega,v_h), 
		$$
		which combined with the fact that $s_\Omega(u,v_h)=0$ leads to
		\begin{align*}
		-a_h(v_h,z_h) + s(\pi_h u - u_h,v_h) &= s(\pi_h u,v_h) - s_\omega(\tilde
		U_\omega,v_h)  \\
		&=  s_\Omega(\pi_h u - u,v_h) + s_\omega(\pi_h u - \tilde U_\omega,v_h).
		\end{align*}
	\end{proof}
\end{lemma}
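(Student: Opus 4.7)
Both identities have the flavour of Galerkin orthogonality: we rearrange the discrete equations so that the exact solution $u$ appears, at which point its strong-form consistency and smoothness let us replace it by $\pi_h u$ plus a small residual.

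For the first identity, the plan is to verify that the exact solution $u \in H^2(\Omega)$ satisfies $a_h(u, w_h) = (f, w_h)_\Omega$ for every $w_h \in V_h$. This is where the boundary term in the definition of $a_h$ is used: integrating the diffusion term by parts gives
$$
(-\mu \Delta u, w_h)_\Omega = (\mu \nabla u, \nabla w_h)_\Omega - \langle \mu \nabla u \cdot n, w_h\rangle_{\partial\Omega},
$$
and adding $(\beta \cdot \nabla u, w_h)_\Omega$ recovers precisely $a_h(u, w_h)$ on the left and $(f, w_h)_\Omega$ on the right (using \eqref{eq:intro_model_problem}). Subtracting the first equation of \eqref{eq:method_FEM} from this identity yields $a_h(u - u_h, w_h) = -s_*(z_h, w_h)$; inserting $\pm \pi_h u$ in the first slot and rearranging gives the claim.

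For the second identity, the plan is to start from the second equation of \eqref{eq:method_FEM}, add $s(\pi_h u, v_h)$ to both sides, and split $s = s_\Omega + s_\omega$. The key observation is that $s_\Omega(u, v_h) = 0$ for $u \in H^2(\Omega)$, because the normal derivatives of $u$ across interior element faces have no jump; hence $s_\Omega(\pi_h u, v_h) = s_\Omega(\pi_h u - u, v_h)$. Writing $s_\omega(\pi_h u, v_h) - s_\omega(\tilde U_\omega, v_h) = s_\omega(\pi_h u - \tilde U_\omega, v_h)$ by bilinearity finishes the argument.

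Neither step involves a genuine obstacle: both are bookkeeping with the bilinear forms. The only point that requires some care is the boundary term in $a_h$, which is nonstandard because no boundary conditions are imposed on $u$; one must keep it in the integration by parts rather than discarding it as in a coercive Dirichlet setting. The vanishing of $s_\Omega$ on $H^2$-functions should be recorded explicitly as it is what makes the interior-penalty stabilization strongly consistent.
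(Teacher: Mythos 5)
Your argument is correct and follows essentially the same route as the paper: verify $a_h(u,w_h)=(f,w_h)_\Omega$ by integrating the diffusion term by parts while retaining the boundary term, subtract the first discrete equation and insert $\pm\pi_h u$; then for the second identity rearrange the second discrete equation using $s_\Omega(u,v_h)=0$ (no jumps of $\nabla u\cdot n$ for $u\in H^2(\Omega)$) and bilinearity of $s_\omega$. No gaps.
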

We now introduce the stabilization norm on $[V_h]^2$ by combining the primal and dual stabilizers
$$
\|(v_h,w_h)\|^2_s:= s(v_h,v_h) + s_*(w_h,w_h),
$$
and the ``continuity norm'' defined on $H^{\frac32+\varepsilon}(\Omega)$, for any $\varepsilon>0$,
$$
\| v \|_\sharp:= \||\beta|^\frac12 h^{-\frac12} v \|_{\Omega}
+ \||\beta|^{\frac12} h^{\frac12} \nabla v\|_\Omega + \|h^{\frac12} \mu^{\frac12} \nabla v \cdot n\|_{\partial \Omega}.
$$
From the jump inequality \eqref{eq:jumpineq}, standard approximation bounds for $\pi_h$ and the trace inequality \eqref{eq:trace_cont}, it follows that for $u \in H^2(\Omega)$
\begin{equation}\label{eq:approx}
\|(u - \pi_h u,0)\|_s + \| u - \pi_h u \|_\sharp
\le C (\mu^{\frac12} h +|\beta|^{\frac12} h^{\frac32}) |u|_{H^2(\Omega)}.
\end{equation}

We also define the orthogonal space 
$$
V_h^\perp := \{ v \in H^2(\Omega): (v,w_h)_\Omega = 0,\quad
\forall w_h \in V_h\}.
$$

\begin{lemma}\label{lem:cont}(Continuity)
Let $v \in V_h^{\perp}$ and $w_h \in V_h$, then
$$
a_h(v,w_h) \leq C \| v \|_\sharp \|(0,w_h)\|_s.
$$
\end{lemma}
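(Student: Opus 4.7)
The plan is to split $a_h(v,w_h)$ into its three defining contributions and bound each by one of the three constituents of $\|v\|_\sharp$ paired with a piece of $s_*(w_h,w_h) = \|(0,w_h)\|_s^2$. Concretely, I would write
\begin{equation*}
a_h(v,w_h) = (\beta \cdot \nabla v, w_h)_\Omega + (\mu \nabla v, \nabla w_h)_\Omega - \langle \mu \nabla v \cdot n, w_h \rangle_{\partial \Omega},
\end{equation*}
and handle the three terms on the right separately.

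For the convection term the key move is an integration by parts, which (using $\nabla\cdot\beta = 0$ in the constant-$\beta$ setting) yields
\begin{equation*}
(\beta \cdot \nabla v, w_h)_\Omega = -(v, \beta \cdot \nabla w_h)_\Omega + \langle \beta \cdot n\, v, w_h \rangle_{\partial \Omega}.
\end{equation*}
Since $v \in V_h^\perp$, the volume term may be rewritten as $-(v, (I-\pi_h)(\beta \cdot \nabla w_h))_\Omega$. Here $\beta\cdot\nabla w_h$ is elementwise constant with jumps only in the normal component, so $\jump{\beta \cdot \nabla w_h} = (\beta\cdot n)\jump{\nabla w_h \cdot n}$; a standard Oswald-type approximation bound then gives
\begin{equation*}
\|(I-\pi_h)(\beta \cdot \nabla w_h)\|_\Omega \leq C h^{\frac12} |\beta| \Big(\sum_{F\in\mathcal F_i}\|\jump{\nabla w_h \cdot n}\|_F^2\Big)^{\frac12} \leq C |\beta|^{\frac12} h^{-\frac12} s_\Omega(w_h,w_h)^{\frac12},
\end{equation*}
where the last step uses that $s_\Omega$ weights the jumps by $h(\mu + |\beta| h) \geq |\beta| h^2$. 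Cauchy--Schwarz then produces the bound $\||\beta|^{\frac12}h^{-\frac12}v\|_\Omega \cdot s_\Omega(w_h,w_h)^{\frac12}$, both factors of which are controlled by $\|v\|_\sharp$ and $\|(0,w_h)\|_s$ respectively. The boundary contribution is handled by Cauchy--Schwarz followed by the continuous trace inequality \eqref{eq:trace_cont} on $v$, absorbing the resulting $\||\beta|^{\frac12}h^{-\frac12}v\|_\Omega$ and $\||\beta|^{\frac12}h^{\frac12}\nabla v\|_\Omega$ into $\|v\|_\sharp$, and matching against the $\langle |\beta| w_h, w_h\rangle_{\partial\Omega}$ piece of $s_*$.

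The diffusion volume term is handled by Cauchy--Schwarz as $(\mu \nabla v, \nabla w_h)_\Omega \leq \mu^{\frac12}\|\nabla v\|_\Omega \cdot \mu^{\frac12}\|\nabla w_h\|_\Omega$; the first factor is recognized as $Pe(h)^{-\frac12}\||\beta|^{\frac12}h^{\frac12}\nabla v\|_\Omega \leq \|v\|_\sharp$ using the assumption $Pe(h)\gtrsim 1$, while the second factor is immediately bounded by $\|(0,w_h)\|_s$ via the $(\mu\nabla w_h,\nabla w_h)_\Omega$ term in $s_*$. Finally, the boundary diffusion term $\langle \mu \nabla v \cdot n, w_h \rangle_{\partial\Omega}$ is split exactly as $\|h^{\frac12}\mu^{\frac12}\nabla v\cdot n\|_{\partial\Omega} \cdot \|h^{-\frac12}\mu^{\frac12} w_h\|_{\partial\Omega}$, which directly matches the third term in $\|v\|_\sharp$ against the $\langle \mu h^{-1} w_h, w_h\rangle_{\partial\Omega}$ piece of $s_*$. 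Summing and using $a^2+b^2+c^2 \leq (a+b+c)^2$ type arguments concludes the proof.

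\textbf{Main obstacle.} The only delicate step is bounding the convective volume term: one must couple the $L^2$-orthogonality of $v$ with a jump-based approximation estimate for $(I-\pi_h)(\beta\cdot\nabla w_h)$, balancing a factor of $h^{\frac12}$ gained from approximation against the $h^{-\frac12}$ scaling needed to match $\|v\|_\sharp$, and correctly extracting the weight $|\beta|^{\frac12}$ from the interior penalty $s_\Omega$ whose weight is $h(\mu + |\beta|h)$. All other estimates are routine applications of Cauchy--Schwarz and trace inequalities.
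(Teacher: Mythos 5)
Your proposal is correct and follows essentially the same route as the paper: integrate by parts in the convective term, use the $L^2$-orthogonality of $v$ together with the jump-based approximation estimate \eqref{eq:beta_approx} (your Oswald-type bound with the weight $h(\mu+|\beta|h)\ge |\beta|h^2$ is exactly this, with $\pi_h$ realizing the infimum) to control the volume convection term, and treat the remaining terms by Cauchy--Schwarz with trace inequalities and $Pe(h)\gtrsim 1$. You are merely more explicit than the paper about the trace inequality needed for $\langle v\,\beta\cdot n, w_h\rangle_{\partial\Omega}$ and about where $Pe(h)\gtrsim 1$ enters for the diffusive volume term.
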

\begin{proof}
Integrating by parts in the convective term of $a_h$ and using $\nabla \cdot \beta = 0$ leads to
$$
a_h(v,w_h) = -(v,\beta \cdot \nabla w_h)_\Omega + \left<v \beta \cdot n,w_h \right>_{\partial \Omega}
+ ( \mu \nabla v , \nabla w_h  )_{\Omega} -\left< \mu \nabla v \cdot n , w_h \right>_{\partial \Omega}.
$$
For the first term we recall the discrete approximation estimate that holds for any piecewise linear $\beta$, see e.g. \cite[Theorem 2.2]{Bur05},
\begin{equation}\label{eq:beta_approx}
	\begin{split}
		\inf_{x_h \in V_h} \|h^{\frac12} (\beta \cdot \nabla w_h - x_h)\|_\Omega
		&\leq C \left(\sum_{F \in \mathcal{F}_i} \|h \jump{\beta \cdot \nabla w_h}\|_{F}^2\right)^{\frac12} \\
		&\leq C |\beta|^\frac12 \gamma^{-\frac12} s_\Omega(w_h,w_h)^{\frac12}		
	\end{split}
\end{equation}
and use orthogonality to obtain
\begin{equation*}
-(v,\beta \cdot \nabla w_h)_\Omega \leq \|h^{-\frac12} v\|_\Omega \inf_{x_h \in V_h} \|h^{\frac12} (\beta \cdot \nabla w_h -x_h)\|_\Omega \leq C \| v \|_\sharp \|(0,w_h)\|_s.
\end{equation*}
For the remaining terms, applying the Cauchy-Schwarz inequality we see that
$$
\left<v \beta \cdot n,w_h \right>_{\partial \Omega}+ ( \mu \nabla v , \nabla w_h  )_{\Omega}-
\left< \mu \nabla v \cdot n , w_h \right>_{\partial \Omega} \leq C \| v \|_\sharp \|(0,w_h)\|_s.
$$
\end{proof}

Note that the proof of the above continuity estimate holds for any divergence-free piecewise linear velocity field $\beta$.
To address the case of a general velocity field $\beta\in W^{1,\infty}(\Omega)$ one can use a similar argument by considering its piecewise linear approximation as in \cite[Lemma 5]{BNO20}. Assuming that $\beta$ is divergence-free, the constant would be proportional to $h Pe(h)^\frac12 |\beta|_{1,\infty} / \|\beta\|_{\infty}$, otherwise it would be proportional to $Pe(h)^\frac12 |\beta|_{1,\infty} / \|\beta\|_{\infty}$.
\subsection{Convergence of regularization}
We now prove optimal convergence for the stabilizing and data assimilation terms.
\begin{proposition}(Convergence of regularization).\label{prop:conv_stab}
Let $u \in H^2(\Omega)$ be the solution of \eqref{eq:intro_model_problem}
and $(u_h,z_h) \in [V_h]^2$ the solution to \eqref{eq:method_FEM}, then there holds
$$
\|(\pi_h u - u_h,z_h)\|_s \leq C(\mu^{\frac12} h + |\beta|^{\frac12} h^{\frac32}) (|u|_{H^2(\Omega)} + h^{-2}\|\delta\|_\omega).
$$
\end{proposition}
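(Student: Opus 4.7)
The plan is to derive an energy identity by testing the two consistency relations of \cref{lem:consist} with the natural choices $v_h = \pi_h u - u_h$ and $w_h = z_h$, then adding them. By design, the cross terms $\pm a_h(\pi_h u - u_h, z_h)$ cancel, leaving on the left exactly $\|(\pi_h u - u_h, z_h)\|_s^2 = s(\pi_h u - u_h, \pi_h u - u_h) + s_*(z_h, z_h)$, and on the right
\[
a_h(\pi_h u - u, z_h) + s_\Omega(\pi_h u - u, \pi_h u - u_h) + s_\omega(\pi_h u - \tilde U_\omega, \pi_h u - u_h).
\]
After this, the task reduces to bounding each of these three terms by $C(\mu^{1/2}h + |\beta|^{1/2}h^{3/2})(|u|_{H^2(\Omega)} + h^{-2}\|\delta\|_\omega)\cdot\|(\pi_h u - u_h, z_h)\|_s$; the conclusion then follows by dividing through.

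For the first term I would use the $L^2$-orthogonality of $\pi_h u - u$ against $V_h$ to invoke the continuity estimate \cref{lem:cont}, obtaining $a_h(\pi_h u - u, z_h) \le C\|\pi_h u - u\|_\sharp \|(0, z_h)\|_s$, and then apply the combined approximation bound \eqref{eq:approx} to produce the prefactor $(\mu^{1/2}h + |\beta|^{1/2}h^{3/2})|u|_{H^2(\Omega)}$. The second term is a direct Cauchy--Schwarz in the semi-norm induced by $s_\Omega$ combined with the jump bound \eqref{eq:jumpineq}, which gives the same scaling. For the third term, I would split $\pi_h u - \tilde U_\omega = (\pi_h u - u) - \delta$ and Cauchy--Schwarz in $s_\omega$ to treat the projection error and the data noise separately.

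The only bookkeeping step that requires genuine attention — and the place where the perhaps surprising $h^{-2}\|\delta\|_\omega$ factor originates — is the noise contribution. Since $s_\omega(\cdot,\cdot) = ((|\beta|h^{-1} + \mu h^{-\zeta})\cdot,\cdot)_\omega$ with $\zeta \in [0,2]$, a direct computation yields
\[
s_\omega(\delta,\delta)^{1/2} \le (|\beta|^{1/2} h^{-1/2} + \mu^{1/2} h^{-\zeta/2})\|\delta\|_\omega \le C(|\beta|^{1/2} h^{-1/2} + \mu^{1/2} h^{-1})\|\delta\|_\omega,
\]
which is exactly $(\mu^{1/2}h + |\beta|^{1/2}h^{3/2}) h^{-2}\|\delta\|_\omega$ up to a constant; this is where the hypothesis $\zeta \le 2$ is used critically. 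The corresponding clean piece $s_\omega(\pi_h u - u, \pi_h u - u)^{1/2}$ is absorbed by the $L^2$-approximation bound $\|\pi_h u - u\|_\omega \le Ch^2|u|_{H^2(\Omega)}$, which, again using $\zeta \le 2$, is at most the target rate times $|u|_{H^2(\Omega)}$. Collecting the three contributions on the right and dividing by $\|(\pi_h u - u_h, z_h)\|_s$ closes the argument. I do not anticipate a serious obstacle: the structure is the same as in the diffusion-dominated case of \cite{BNO20}, and the only new wrinkle is keeping track of the stronger penalty weights in $s_\omega$, which is precisely what introduces the $h^{-2}$ noise sensitivity flagged in \cref{rem:fem_penalty}.
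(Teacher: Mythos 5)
Your proposal is correct and follows essentially the same route as the paper: the same energy identity obtained by testing the two consistency relations with $w_h=z_h$ and $v_h=\pi_h u-u_h$ and adding, then the continuity lemma for the $a_h$ term, Cauchy--Schwarz for the stabilization and data terms, and the approximation bound \eqref{eq:approx}. The only difference is that you spell out the noise computation $s_\omega(\delta,\delta)^{1/2}\le(|\beta|^{1/2}h^{-1/2}+\mu^{1/2}h^{-\zeta/2})\|\delta\|_\omega$ explicitly where the paper absorbs it into a single Cauchy--Schwarz line, and your accounting of where $\zeta\le 2$ enters is accurate.
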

\begin{proof}
Denoting $e_h = \pi_h u - u_h$ we have that
$$
\|(e_h,z_h)\|_s^2 = a_h(e_h,z_h)  + s_*(z_h,z_h) - a_h(e_h,z_h) + s(e_h,e_h).
$$
Using both claims in \cref{lem:consist} we may write
$$
\|(e_h,z_h)\|_s^2 = a_h(\pi_h u - u,z_h) +s_\Omega(\pi_h u -
u,e_h)+s_\omega(\pi_h u - \tilde U_\omega,e_h).
$$
Since $\pi_h u - u \in V_h^\perp$ we have by \cref{lem:cont} that
$$
 a_h(\pi_h u - u,z_h) \leq C \|\pi_h u - u\|_\sharp \|(0,z_h)\|_s.
$$
We bound the other terms using the Cauchy-Schwarz inequality
\begin{equation*}
s_\Omega(\pi_h u - u,e_h)+s_\omega(\pi_h u - \tilde U_\omega,e_h)
\leq (\|(\pi_h u - u,0)\|_s + (|\beta| h^{-1} + \mu h^{-\zeta})^{\frac12} \|\delta\|_\omega) \|(e_h,0)\|_s.
\end{equation*}
Collecting the above bounds we have
\begin{equation*}
\|(e_h,z_h)\|_s^2 \\
\leq C(\|\pi_h u - u\|_\sharp+\|(\pi_h u -
u,0)\|_s + (|\beta| h^{-1} +
\mu h^{-\zeta})^{\frac12} \|\delta\|_\omega) \|(e_h,z_h)\|_s
\end{equation*}
and the claim follows by applying the approximation inequality \eqref{eq:approx}.
\end{proof}

\begin{remark}
	Compared to the result in the diffusion-dominated case \cite[Proposition 3]{BNO20}, the sensitivity to data perturbations has increased by a factor of $h^{-1}$. This is due to the stronger penalty in the data term $s_\omega$ (c.f. \cref{rem:fem_penalty}). 
\end{remark}

\subsection{Downstream estimates}\label{sec:error_estimates_downstream}
In this case we consider $\beta = (\beta_1,0)$ with $\beta_1 > 0$ and the data set $$\omega = (0,x)\times (y^{-}, y^{+})$$ touching part of the inflow boundary $\partial \Omega^-$. We aim to obtain control of the following weighted triple norm defined on $V_h$,
\begin{equation}\label{eq:triple_norm_down}
\tnorm{v_h}_\varphi^2 := \| |\beta|^\frac12 v_h \varphi^{\frac12}\|_{\Omega}^2 +
\| \mu^{\frac12} \nabla v_h \varphi^{\frac12}\|_{\Omega}^2 + \||\beta \cdot n|^{\frac12} v_h
\varphi^{\frac12}\|_{ \partial \Omega^+}^2,
\end{equation}
where $\varphi$ is given by \eqref{eq:weight_down}. Since $\varphi \in (0,1)$, we will often use that $\| \cdot \varphi \|_\Omega \leq \| \cdot \varphi^\frac12 \|_\Omega$. We first consider $v_h \varphi$ as a test function in the weak bilinear form $a_h$ and obtain the following bound.
\begin{lemma}\label{lem:weight_control}
	There exist $\alpha>0$ and $h_0>0$ such that for all $h<h_0$ and $v_h \in V_h$ we have
	$$
	\alpha \tnorm{v_h}_\varphi^2 \leq  a_h(v_h, v_h \varphi) + C \|(v_h,0)\|^2_s.
	$$
\begin{proof}
We start with the convective term. Since $\nabla \cdot \beta = 0$, the divergence theorem leads to
\begin{equation*}
2 (\beta \cdot \nabla v_h, v_h \varphi)_\Omega = \left< v_h \beta \cdot n, v_h \varphi \right>_{\partial \Omega}
- (v_h, v_h \beta \cdot \nabla \varphi)_\Omega.
\end{equation*}
Then combining with \eqref{eq:beta_grad_weight} we have
\begin{equation*}
(\beta \cdot \nabla v_h, v_h \varphi)_\Omega = \frac12 \left( \left< v_h \beta \cdot n, v_h \varphi \right>_{\partial \Omega}   + |\beta| \|v_h \varphi^{\frac12}\|^2_\Omega \right).
\end{equation*}
We split the boundary term into inflow and outflow
$$
\left< v_h \beta \cdot n, v_h \varphi \right>_{\partial \Omega} = -\| |\beta \cdot n|^{\frac12} v_h \varphi^{\frac12} \|_{\partial \Omega^-}^2
+ \| |\beta \cdot n|^{\frac12} v_h \varphi^{\frac12} \|_{\partial \Omega^+}^2,
$$
and write
$$
\frac12 \left( \| |\beta \cdot n|^{\frac12} v_h \varphi^{\frac12} \|_{\partial \Omega^+}^2  + |\beta| \|v_h \varphi^{\frac12}\|^2_\Omega \right)
=(\beta \cdot \nabla v_h, v_h \varphi)_\Omega + \frac12 \| |\beta \cdot n|^{\frac12} v_h \varphi^{\frac12} \|_{\partial \Omega^-}^2.
$$
Splitting now the inflow boundary with respect to the closed set $\omega_\beta$ and using the discrete trace inequality \eqref{eq:trace_cont} in  $\omega$, and the weight decay \eqref{eq:weight_psi2} together with a standard global trace inequality for $H^1$ functions outside, we have that
\begin{align}
\begin{split}\label{eq:bound_inflow}
\| |\beta \cdot n|^{\frac12} v_h \varphi^{\frac12}\|_{\partial \Omega^-}
&\leq C |\beta|^{\frac12} (\| v_h \varphi^{\frac12}\|_{\partial \Omega^- \cap \omega_\beta} + \| v_h \varphi^{\frac12}\|_{\partial \Omega^- \setminus \omega_\beta})\\
&\leq C |\beta|^{\frac12} h^{-\frac12} \| v_h \|_{\omega} +  C |\beta|^{\frac12} h^{\frac32} \|v_h\|_{H^1(\Omega)} \\
&\leq C \gamma^{-\frac12} \|(v_h,0)\|_s,
\end{split}
\end{align}
where in the last step we used the Poincar\'e-type inequality \eqref{eq:Poincare}.
Hence we obtain control over the convective terms in the triple weighted norm
\begin{equation}\label{eq:triple_conv_stab}
\frac12 \left( \| |\beta \cdot n|^{\frac12} v_h \varphi^{\frac12} \|_{\partial \Omega^+}^2  + |\beta| \|v_h \varphi^{\frac12}\|^2_\Omega \right)
\leq  (\beta \cdot \nabla v_h, v_h \varphi)_\Omega + C \gamma^{-1} \|(v_h,0)\|^2_s.
\end{equation}
Let us consider the terms in $a_h(v_h, v_h \varphi)$ corresponding to the diffusion operator, starting with
$$
(\mu \nabla v_h,\nabla (v_h \varphi))_{\Omega}
= \|\mu^{\frac12} \nabla v_h \varphi^{\frac12}\|_{\Omega}^2 + (\mu \nabla v_h, v_h \nabla \varphi)_\Omega,
$$
which we rearrange as
$$
\|\mu^{\frac12} \nabla v_h \varphi^{\frac12}\|_{\Omega}^2
= (\mu \nabla v_h,\nabla (v_h \varphi))_{\Omega}
- (\mu \nabla v_h, v_h \nabla \varphi)_\Omega. 
$$
Bounding $\nabla \varphi$ by \eqref{eq:grad_weight} and using Cauchy-Schwarz together with $\mu \le |\beta| h$ we have that
\begin{align*}
| (\mu \nabla v_h, v_h \nabla \varphi)_\Omega |
&\leq \mu(|\nabla v_h \cdot \nabla \varphi|, v_h)_\Omega \\ 
&\leq \mu (1 + \lambda^{-1} h^{-\frac12}) (|\nabla v_h| \varphi^{\frac12}, v_h \varphi^{\frac12})_{\Omega}\\
&\leq \frac13 \|\mu^{\frac12} \nabla v_h \varphi^{\frac12}\|_{\Omega}^2
+ C(h + \lambda^{-2}) |\beta| \|v_h \varphi^{\frac12}\|^2_{\Omega}.
\end{align*}
We split the boundary term $\left< \mu \nabla v_h \cdot n , v_h \varphi\right>_{\partial \Omega}$ into inflow and outflow again. Similarly to \eqref{eq:bound_inflow} we have that
$$
\left< \mu \nabla v_h \cdot n, v_h \varphi\right>_{ \partial \Omega^-}  \leq C h \gamma^{-1} \|(v_h,0)\|^2_s. 
$$
For the outflow boundary term we use Cauchy-Schwarz and a trace inequality to obtain
\begin{align*}
	\left< \mu \nabla v_h \cdot n, v_h \varphi\right>_{ \partial \Omega^+}
	&\leq \|\mu^{\frac12} \nabla v_h \cdot n \varphi^{\frac12}\|_{ \partial \Omega^+} \|\mu^{\frac12} v_h \varphi^{\frac12}\|_{ \partial \Omega^+} \\
	&\leq C h^{-\frac12} \|\mu^{\frac12} \nabla v_h \varphi^{\frac12}\|_{\Omega} Pe_{\lim}^{-\frac12} h^{\frac12} \||\beta \cdot n|^{\frac12} v_h \varphi^{\frac12}\|_{ \partial \Omega^+} \\
	&\leq {\textstyle\frac13} \big\|\mu^{\frac12} \nabla v_h \varphi^{\frac12}\big\|_{\Omega}^2 + Pe^{-1}_{\lim} \big\||\beta \cdot n|^{\frac12} v_h \varphi^{\frac12}\big\|_{ \partial \Omega^+}^2.
\end{align*}
We denote the part of the boundary where $\beta \cdot n = 0$  by $\partial \Omega^0$ and use the assumption that $\partial \Omega^0$ is away from $\omega_\beta$, meaning that the weight function $\varphi$ is $\mathcal{O}(h^3)$ there.
We use trace inequalities and \eqref{eq:Poincare} to bound
$$
\left< \mu \nabla v_h \cdot n , v_h \varphi\right>_{\partial \Omega^0} \leq C \gamma^{-1} \|(v_h,0)\|^2_s.
$$
Collecting the above bounds we obtain that
\begin{align*}\label{eq:triple_diff_stab}
\tfrac13 \|\mu^{\frac12} \nabla v_h \varphi^{\frac12}\|_{\Omega}^2
\leq{}& (\mu \nabla v_h,\nabla (v_h \varphi))_{\Omega}  - \left< \mu \nabla v_h \cdot n , v_h \varphi\right>_{\partial \Omega}\\
&+ C (h + \lambda^{-2} + Pe^{-1}_{\lim}) ( \| |\beta|^{\frac12} v_h \varphi^{\frac12}\|_{\Omega}^2 + \||\beta \cdot n|^{\frac12} v_h
\varphi^{\frac12}\|^2_{ \partial \Omega^+})\\
&+ C \gamma^{-1} \|(v_h,0)\|^2_s.
\end{align*}
We conclude by combining this with \eqref{eq:triple_conv_stab} and assuming that $h$ is small enough and $Pe_{\lim}$ are large enough (thus absorbing the convective terms from the rhs into the lhs).
\end{proof}
\end{lemma}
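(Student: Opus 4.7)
The plan is to test the bilinear form $a_h(v_h,\cdot)$ against $v_h \varphi$ and extract each contribution to $\tnorm{v_h}_\varphi^2$ via integration by parts, relying essentially on the two weight identities $\beta \cdot \nabla \varphi = -|\beta||\varphi|$ from \eqref{eq:beta_grad_weight} and $|\nabla \varphi| \le (1 + \lambda^{-1} h^{-\frac12})|\varphi|$ from \eqref{eq:grad_weight}. The terms that do not fit in $\tnorm{\cdot}_\varphi^2$ (inflow boundary, cross-products involving $\nabla \varphi$, diffusive boundary on $\partial\Omega^+$) should either be absorbed via smallness of $h$, $\lambda^{-1}$ and $\Pe_{\lim}^{-1}$, or bounded by the stabilization norm $\|(v_h,0)\|_s$.

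First I would treat the convective term. Since $\nabla \cdot \beta = 0$, symmetrizing gives $2(\beta \cdot \nabla v_h, v_h \varphi)_\Omega = \langle v_h \beta \cdot n, v_h \varphi\rangle_{\partial\Omega} - (v_h^2, \beta \cdot \nabla \varphi)_\Omega$. Substituting \eqref{eq:beta_grad_weight} yields the bulk term $\tfrac12 |\beta| \|v_h \varphi^{\frac12}\|_\Omega^2$ and the outflow contribution $\tfrac12 \||\beta \cdot n|^{\frac12} v_h \varphi^{\frac12}\|^2_{\partial\Omega^+}$; both are in $\tnorm{v_h}_\varphi^2$. The wrong-signed inflow term $\tfrac12\||\beta \cdot n|^{\frac12} v_h \varphi^{\frac12}\|^2_{\partial\Omega^-}$ I would move to the right side and control by splitting $\partial\Omega^- = (\partial\Omega^- \cap \omega_\beta) \cup (\partial\Omega^- \setminus \omega_\beta)$: on the first piece (which in the model geometry lies in $\omega$) the discrete trace inequality produces $h^{-\frac12}\|v_h\|_\omega$, absorbed into $s_\omega$; on the second piece $\varphi = \mathcal{O}(h^3)$ by \eqref{eq:weight_psi2} and a continuous trace inequality combined with the Poincar\'e-type bound \eqref{eq:Poincare} suffices.

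For the diffusive volume term, product rule gives $(\mu \nabla v_h, \nabla(v_h \varphi))_\Omega = \|\mu^{\frac12} \nabla v_h \varphi^{\frac12}\|_\Omega^2 + (\mu \nabla v_h, v_h \nabla \varphi)_\Omega$. I would rearrange to isolate the norm on the left and bound the cross term by Cauchy--Schwarz using \eqref{eq:grad_weight} and the standing assumption $\mu \le |\beta| h$, splitting as $\tfrac13 \|\mu^{\frac12} \nabla v_h \varphi^{\frac12}\|_\Omega^2 + C(h + \lambda^{-2})|\beta|\|v_h \varphi^{\frac12}\|_\Omega^2$; the second piece will later be absorbed into the convective bulk contribution from step one. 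The diffusive boundary $-\langle \mu \nabla v_h \cdot n, v_h \varphi\rangle_{\partial\Omega}$ I would split into three pieces. On $\partial\Omega^-$ the argument mirrors the convective inflow since $\mu \lesssim |\beta| h$. On the characteristic part $\partial\Omega^0$ (where $\beta \cdot n = 0$), $\varphi = \mathcal{O}(h^3)$ by assumption on the geometry and trace plus \eqref{eq:Poincare} bound the contribution by $\|(v_h,0)\|_s^2$. On $\partial\Omega^+$ I would apply Cauchy--Schwarz, the discrete trace inequality \eqref{eq:trace_grad}, and the hypothesis $\mu \le \Pe_{\lim}^{-1} |\beta \cdot n| h$ to get $\tfrac13 \|\mu^{\frac12} \nabla v_h \varphi^{\frac12}\|_\Omega^2 + \Pe_{\lim}^{-1} \||\beta \cdot n|^{\frac12} v_h \varphi^{\frac12}\|_{\partial\Omega^+}^2$.

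The main obstacle will be the bookkeeping: I need the three sources of error (the factors $h$, $\lambda^{-2}$, and $\Pe_{\lim}^{-1}$) to be simultaneously small enough that every offending term on the right can be absorbed into the matching piece of $\tnorm{v_h}_\varphi^2$ while leaving a definite fraction $\alpha>0$. Concretely I would fix $\lambda$ large, then $\Pe_{\lim}$ large enough to dominate the $\Pe_{\lim}^{-1}$ coming from the outflow diffusive boundary, then choose $h_0$ small so that the $\lambda^{-1} h^{-\frac12}$ factor in \eqref{eq:grad_weight} does not blow up and the $h$-terms in the cross-product absorption remain subcritical. Once this calibration is done, collecting all estimates yields the stated inequality.
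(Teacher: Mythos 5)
Your proposal is correct and follows essentially the same route as the paper's proof: the same symmetrization of the convective term via $\nabla\cdot\beta=0$ and \eqref{eq:beta_grad_weight}, the same splitting of the inflow boundary with respect to $\omega_\beta$ controlled by the discrete trace inequality in $\omega$ and the weight decay plus \eqref{eq:Poincare} outside, the same treatment of the diffusive volume cross term via \eqref{eq:grad_weight} and $\mu\le|\beta|h$, and the same three-way split of the diffusive boundary term using $\mu\le \Pe_{\lim}^{-1}|\beta\cdot n|h$ on $\partial\Omega^+$. The final calibration of $h$, $\lambda$ and $\Pe_{\lim}$ you describe is exactly the absorption argument the paper uses.
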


Now we refine the control over the triple norm $\tnorm{v_h}_\varphi$ by taking the projection $\pi_h (v_h \varphi) \in V_h$ as a test function. 

\begin{corollary}\label{cor:weight_control_projection}
	There exist $\alpha>0$ and $h_0>0$ such that for all $h<h_0$ and $v_h \in V_h$ we have
	\begin{equation*}
	\alpha \tnorm{v_h}_\varphi^2  \leq a_h(v_h, \pi_h (v_h\varphi) ) + C \|(v_h,0)\|^2_s.
	\end{equation*}
\begin{proof}
Since
$$
a_h(v_h, \pi_h (v_h \varphi) ) = a_h(v_h, (\pi_h-1) (v_h \varphi) ) + a_h(v_h, v_h \varphi),
$$
we must bound $a_h(v_h, (\pi_h-1) (v_h \varphi) )$ in a suitable way.
Writing out the terms we have
\begin{align*}
a_h(v_h, (\pi_h-1) (v_h \varphi) ) ={}& (\beta \cdot \nabla v_h, (\pi_h-1) (v_h
\varphi) )_{\Omega} + (\mu \nabla v_h, \nabla (\pi_h-1) (v_h
\varphi) )_{\Omega} \\
&- \left<\mu \nabla v_h \cdot n,  (\pi_h-1) (v_h
\varphi) \right>_{\partial \Omega} = I + II + III.
\end{align*}
Let us consider the convection term first, and use orthogonality combined with \eqref{eq:beta_approx}
\begin{align*}
I = (\beta \cdot \nabla v_h, (\pi_h-1) (v_h\varphi) )_{\Omega}
&\leq C |\beta|^\frac12 \gamma^{-\frac12} \|(v_h,0)\|_s  h^{-\frac12} \|(\pi_h-1) (v_h\varphi)\|_{\Omega} \\
&\leq C |\beta|^\frac12 \gamma^{-\frac12} \|(v_h,0)\|_s h^{-\frac12} \|(i_h-1) (v_h\varphi)\|_{\Omega}.
\end{align*}
Integrating by parts and using that $\Delta v_h = 0$ on every element $K$ we obtain by the trace inequality \eqref{eq:trace_cont} and the assumption $Pe(h)>1$ that
\begin{align*}II + III &= \sum_{F \in \mathcal{F}_i} \int_{F} \mu \jump{\nabla v_h \cdot n} (\pi_h-1) (v_h
\varphi) ~\mbox{d}s \\
&\leq C |\beta|^{\frac12} \gamma^{-\frac12} s_\Omega(v_h,v_h)^{\frac12} (h^{-\frac12} \| (\pi_h-1)(v_h\varphi)\|_{\Omega}
+ h^{\frac12} \|\nabla  (\pi_h-1) (v_h\varphi)\|_{\Omega}).
\end{align*}
Notice that $i_h (v_h\varphi) = \pi_h(i_h (v_h\varphi))$ and the stability of the projection gives
\begin{equation}\label{eq:gradient_projection_interpolator}
\|\nabla (\pi_h-i_h) (v_h\varphi)\|_{\Omega} = \|\nabla \pi_h(1-i_h) (v_h\varphi)\|_{\Omega}
\leq C \|\nabla (1-i_h) (v_h\varphi)\|_{\Omega},
\end{equation}
and hence
\begin{equation}
\begin{split}\label{eq_gradient_projection}
h^{\frac12} \|\nabla  (\pi_h-1) (v_h\varphi)\|_{\Omega}
&\leq h^{\frac12} (\|\nabla (\pi_h-i_h) (v_h\varphi)\|_{\Omega}+\|\nabla  (i_h-1) (v_h\varphi)\|_{\Omega}) \\ 
&\leq C h^{\frac12} \|\nabla  (i_h-1) (v_h \varphi)\|_{\Omega}.
\end{split}
\end{equation}
Since 
$$
h^{-\frac12} \| (\pi_h-1) (v_h
\varphi)\|_{\Omega} \leq C h^{-\frac12} \|(i_h-1) (v_h\varphi)\|_{\Omega},
$$
collecting the contributions above we see that
$$
I+II+III \leq C |\beta|^{\frac12} \gamma^{-\frac12} \|(v_h,0)\|_s
\underbrace{(h^{-\frac12} \|(i_h-1) (v_h\varphi)\|_{\Omega} + h^{\frac12}  \|\nabla  (i_h-1) (v_h\varphi)\|_{\Omega})}_{IV},
$$
and hence
$$
a_h(v_h, (\pi_h-1) (v_h \varphi)) = I+II+III \le C \gamma^{-1} \|(v_h,0)\|^2_s + |\beta| IV^2.
$$
The discrete commutator property \cref{lem:disc_com} together with the $\varphi$-bounds \eqref{eq:grad_weight} and \eqref{eq:lem_weight_1} give that
\begin{equation}\label{eq:com_weight}
IV \leq C h^{\frac12} \|\nabla \varphi\|_{\infty,\Omega} \|v_h\|_\Omega
\leq C (h^{\frac12}+\lambda^{-1}) \| v_h \varphi \|_\Omega.
\end{equation}
Since $\varphi \in (0,1)$ and $\varphi < \varphi ^\frac12$, it follows that for $h$ small enough and $\lambda$ large enough, given some $\alpha>0$ we have
\begin{equation}\label{eq:com_weight_beta}
|\beta| IV^2 \leq \frac{\alpha}{2} \tnorm{v_h}_\varphi^2.
\end{equation}
Collecting the estimates for $a_h(v_h, (\pi_h-1) (v_h \varphi) )$ and using \cref{lem:weight_control} we see that
\begin{align*}
a_h(v_h, \pi_h (v_h \varphi) ) = a_h(v_h, (\pi_h-1) (v_h \varphi) ) + a_h(v_h, v_h \varphi)
\ge \frac{\alpha}{2} \tnorm{v_h}_\varphi^2 - C \gamma^{-1} \|(v_h,0)\|^2_s,
\end{align*}
from which we conclude by renaming $\alpha/2$ as $\alpha$.  
\end{proof}
\end{corollary}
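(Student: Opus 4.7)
The plan is to reduce to \cref{lem:weight_control} by writing
$$a_h(v_h, \pi_h(v_h\varphi)) = a_h(v_h, v_h\varphi) + a_h(v_h, (\pi_h - 1)(v_h\varphi)),$$
applying the lemma to the first term, and showing that the perturbation $a_h(v_h, (\pi_h - 1)(v_h\varphi))$ is bounded by $C\gamma^{-1}\|(v_h,0)\|_s^2$ plus a small multiple of $\tnorm{v_h}_\varphi^2$ that can be absorbed on the left. Since $(\pi_h - 1)(v_h\varphi)$ is $L^2$-orthogonal to $V_h$, the strategy mirrors the continuity estimate of \cref{lem:cont}: orthogonality handles the convective part and element-wise integration by parts handles the diffusion part.

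For the convective contribution $(\beta \cdot \nabla v_h, (\pi_h - 1)(v_h\varphi))_\Omega$ I would subtract the best piecewise linear approximation of $\beta \cdot \nabla v_h$ using orthogonality and invoke the approximation bound \eqref{eq:beta_approx}, producing a factor $h^{-1/2}\|(\pi_h - 1)(v_h\varphi)\|_\Omega$ times $|\beta|^{1/2}\gamma^{-1/2}\|(v_h,0)\|_s$. For the diffusion pieces, element-wise integration by parts combined with $\Delta v_h = 0$ on each $\mathbb{P}_1$ element collapses the volume and boundary terms into a sum of interior-face jumps $\mu \jump{\nabla v_h \cdot n}$ tested against $(\pi_h - 1)(v_h\varphi)$; a trace inequality together with the convection-dominated assumption $Pe(h) > 1$ (so $\mu \le |\beta|h$) and $s_\Omega$-control of the jumps produces the same $|\beta|^{1/2}\gamma^{-1/2}\|(v_h,0)\|_s$ factor, now multiplied by $h^{-1/2}\|(\pi_h - 1)(v_h\varphi)\|_\Omega + h^{1/2}\|\nabla(\pi_h - 1)(v_h\varphi)\|_\Omega$.

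The key remaining step is controlling those two projection-error quantities. Direct $L^2$-projection estimates would force the $H^1$-seminorm of $v_h\varphi$ into play, bringing in $|\varphi|_{W^{1,\infty}}$, which is too large since $|\nabla\varphi|$ can blow up like $h^{-1/2}$ by \eqref{eq:grad_weight}. The way out is to route through the Lagrange interpolant $i_h$: because $i_h(v_h\varphi) \in V_h$ one has $\pi_h i_h(v_h\varphi) = i_h(v_h\varphi)$, so a triangle inequality and $H^1$-stability of $\pi_h$ reduce both quantities to the super-approximation errors $\|(1 - i_h)(v_h\varphi)\|_\Omega + h\|\nabla(1 - i_h)(v_h\varphi)\|_\Omega$. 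By the discrete commutator property \cref{lem:disc_com} these are bounded by $Ch|\varphi|_{W^{1,\infty}}\|v_h\|_\Omega$, and applying \eqref{eq:grad_weight} together with \cref{lem:weight} converts this into $C(h^{1/2} + \lambda^{-1})\|v_h\varphi\|_\Omega$. Squaring and multiplying by the $|\beta|$ prefactor leaves a contribution of the form $(h + \lambda^{-2})\||\beta|^{1/2} v_h\varphi^{1/2}\|_\Omega^2$, absorbable into $\alpha\tnorm{v_h}_\varphi^2$ once $h$ is small and $\lambda$ is large. The main obstacle is precisely engineering this cancellation between the $h^{-1/2}$ blow-up of $\nabla\varphi$ and the factor $h$ in the super-approximation estimate; without the Lipschitz-weight version of the discrete commutator property in \cref{lem:disc_com}, the perturbation term would not be absorbable and the argument would collapse.
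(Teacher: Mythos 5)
Your proposal is correct and follows essentially the same route as the paper's proof: the same splitting into $a_h(v_h,v_h\varphi)$ plus the projection-error perturbation, orthogonality with \eqref{eq:beta_approx} for the convective part, element-wise integration by parts with $\Delta v_h=0$ for the diffusion part, and the crucial detour through the Lagrange interpolant so that the discrete commutator property of \cref{lem:disc_com} combined with \eqref{eq:grad_weight} and \eqref{eq:lem_weight_1} yields the absorbable factor $C(h^{1/2}+\lambda^{-1})\|v_h\varphi\|_\Omega$. You also correctly identify the key obstruction (the $h^{-1/2}$ growth of $\nabla\varphi$) and its resolution, which is exactly the point the paper's argument turns on.
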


\begin{lemma}\label{lem:weight_control_lb}
For all $v_h \in V_h$ there holds
\begin{equation*}
\|(0,\pi_h (v_h \varphi))\|^2_s \leq C (\tnorm{v_h}^2_\varphi + \|(v_h,0)\|^2_s).
\end{equation*}
\end{lemma}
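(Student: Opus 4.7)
The plan is to expand $\|(0,\pi_h(v_h\varphi))\|_s^2 = s_*(\pi_h(v_h\varphi),\pi_h(v_h\varphi))$ into its three ingredients---the weighted $\partial\Omega$ integral, the diffusion $L^2$ term, and the interior jump penalty $s_\Omega$---and to bound each by writing $\pi_h(v_h\varphi) = v_h\varphi + (\pi_h-1)(v_h\varphi)$ (with $i_h$ replacing $\pi_h$ in the $s_\Omega$ step). The ``principal'' part $v_h\varphi$ is absorbed into $\tnorm{v_h}_\varphi^2$ using $\varphi\in(0,1)$ (so $\varphi^2\le\varphi$), the weight bound \eqref{eq:grad_weight}, and $\mu\le|\beta|h$ coming from $Pe(h)>Pe_{\lim}>1$; the projection error is controlled via the discrete commutator property of \cref{lem:disc_com} combined with \eqref{eq:gradient_projection_interpolator} and \eqref{eq:lem_weight_1}, yielding the super-approximation bounds
$$\|(1-\pi_h)(v_h\varphi)\|_\Omega^2 \le C\lambda^{-2} h \|v_h\varphi^{1/2}\|_\Omega^2,\quad \|\nabla(1-\pi_h)(v_h\varphi)\|_\Omega^2 \le C\lambda^{-2} h^{-1} \|v_h\varphi^{1/2}\|_\Omega^2,$$
which have exactly the scaling needed to match $\tnorm{v_h}_\varphi^2$ after multiplication by the relevant $(\mu,|\beta|h)$-weights.

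For the boundary I first use $\mu h^{-1}\le Pe_{\lim}^{-1}|\beta|$ to reduce to $\||\beta|^{1/2}\pi_h(v_h\varphi)\|_{\partial\Omega}^2$. On $\partial\Omega^+$ the principal piece $\||\beta|^{1/2}v_h\varphi\|_{\partial\Omega^+} \le \||\beta\cdot n|^{1/2}v_h\varphi^{1/2}\|_{\partial\Omega^+}\le\tnorm{v_h}_\varphi$. On $\partial\Omega^-$ I mimic the split in \eqref{eq:bound_inflow}: the portion inside $\omega_\beta$ lies in $\bar\omega$ and a discrete trace puts it into $|\beta|h^{-1}\|v_h\|_\omega^2\le s_\omega(v_h,v_h)$, while the portion outside $\omega_\beta$ and the piece on $\partial\Omega^0$ enjoy the decay $\varphi=\mathcal{O}(h^3)$ and are bounded using \eqref{eq:Poincare}. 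The projection error on $\partial\Omega$ is handled by the trace inequality \eqref{eq:trace_cont} applied to $(\pi_h-1)(v_h\varphi)$ together with the super-approximation bounds above. For the diffusion term, $\nabla\pi_h(v_h\varphi)=\nabla(v_h\varphi)-\nabla(1-\pi_h)(v_h\varphi)$, and the principal part splits as $\mu^{1/2}\|\varphi\nabla v_h\|_\Omega \le\|\mu^{1/2}\varphi^{1/2}\nabla v_h\|_\Omega\le\tnorm{v_h}_\varphi$ and $\mu^{1/2}\|v_h\nabla\varphi\|_\Omega\le C\||\beta|^{1/2}v_h\varphi^{1/2}\|_\Omega\le C\tnorm{v_h}_\varphi$, using $\varphi^2\le\varphi$ and the combination of \eqref{eq:grad_weight} with $\mu\le|\beta|h$; the projection error satisfies $\mu^{1/2}\|\nabla(1-\pi_h)(v_h\varphi)\|_\Omega\le C\lambda^{-1}|\beta|^{1/2}\|v_h\varphi^{1/2}\|_\Omega\le C\tnorm{v_h}_\varphi$.

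The main obstacle is the interior jump term $s_\Omega(\pi_h(v_h\varphi),\pi_h(v_h\varphi))$: the prefactor $h(\mu+|\beta|h)$ is of order $|\beta|h^2$, and the naive bound $s_\Omega(w_h,w_h)\le C|\beta|h\|\nabla w_h\|_\Omega^2$ combined with the inverse inequality loses a factor $h^{-1}$ relative to $\tnorm{v_h}_\varphi^2$. To avoid this, I decompose $\pi_h(v_h\varphi) = i_h(v_h\varphi) + (\pi_h - i_h)(v_h\varphi)$ and exploit the identity
$$[\nabla i_h(v_h\varphi)\cdot n] = \varphi\,[\nabla v_h\cdot n] - [\nabla(1-i_h)(v_h\varphi)\cdot n],$$
valid on any interior face across which $\nabla\varphi$ is continuous. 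The first contribution integrates to $\le\|\varphi\|_\infty^2\,s_\Omega(v_h,v_h)\le\|(v_h,0)\|_s^2$ since $|\varphi|\le 1$. The second is estimated element-wise via \eqref{eq:trace_cont} applied to $\nabla(1-i_h)(v_h\varphi)\in H^1(K)$, together with $\nabla^2 v_h\equiv 0$, the bound $h^2\|\nabla^2(v_h\varphi)\|_K^2\le C\lambda^{-2}h^{-1}\|v_h\varphi^{1/2}\|_K^2$ (obtained from \eqref{eq:grad_weight} and $|\varphi|_{W^{2,\infty}(K)}\le C\lambda^{-2}h^{-1}\|\varphi\|_{\infty,K}$ on smooth elements), and the super-approximation of $i_h$; the result is a contribution of order $C\lambda^{-2}\tnorm{v_h}_\varphi^2$. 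The residual $(\pi_h-i_h)(v_h\varphi)\in V_h$ is controlled via the inverse inequality combined with $L^2$-stability of $\pi_h$ applied to $(1-i_h)(v_h\varphi)$, yielding another $C\lambda^{-2}\tnorm{v_h}_\varphi^2$. The finitely many faces crossing the kinks of $\psi_2$ add only lower-order terms that are absorbed using $|\nabla\varphi|\le C\lambda^{-1}h^{-1/2}|\varphi|$, and choosing $\lambda$ sufficiently large (as in \cref{lem:weight}) swallows all remaining small constants into $C$.
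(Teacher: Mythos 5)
Your proposal is correct in substance and follows the same architecture as the paper's proof: split $\|(0,\pi_h(v_h\varphi))\|_s^2$ into its boundary, diffusion and jump contributions, insert $v_h\varphi$ (and $i_h(v_h\varphi)$), absorb the principal parts into $\tnorm{v_h}_\varphi^2$ using $\varphi<\varphi^{1/2}$, \eqref{eq:grad_weight}, \eqref{eq:bound_inflow} and $\mu\le|\beta|h$, and control the commutator errors with \cref{lem:disc_com}, \eqref{eq:gradient_projection_interpolator} and \eqref{eq:lem_weight_1}. The one place where you genuinely diverge is the face term $\jump{\nabla(1-i_h)(v_h\varphi)\cdot n}$ inside $s_\Omega$: the paper disposes of it by applying the discrete commutator property on the face $F$ itself, whereas you apply the element-wise trace inequality \eqref{eq:trace_cont} to $\nabla(1-i_h)(v_h\varphi)$ and pay with a second derivative of $\varphi$. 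Your route is arguably the more standard one (and it sidesteps the delicate point that interpolation theory on $F$ controls tangential rather than normal derivatives), but it requires $\varphi\in W^{2,\infty}(K)$ element-wise, which fails on the $\mathcal{O}(h^{-1})$ elements crossed by the kink lines $y=\mathring y^{+}$ and $y=\mathring y^{-}$ of $\psi_2$ --- these are full horizontal lines through the domain, not ``finitely many faces'' as you assert. This is repairable (apply the trace inequality on the two Lipschitz pieces of each such element using the one-sided bound $|\nabla^2\varphi|\le C\lambda^{-2}h^{-1}|\varphi|$, or mollify $\psi_2$ at scale $\lambda h^{1/2}$ so that \eqref{eq:weight_psi2} and the second-derivative bound hold globally), but as written the step is unjustified on those elements. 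Two smaller remarks: your jump identity actually holds a.e.\ on every interior face, since $\jump{\nabla\varphi\cdot n}=0$ a.e.\ even on faces met by a kink, so no separate face correction is needed there; and the $\lambda^{-2}$ factors you track need not be made small for this lemma --- the statement only asks for a generic constant $C$ --- so the closing ``choose $\lambda$ large'' is superfluous here (it matters in \cref{cor:weight_control_projection}, not in \cref{lem:weight_control_lb}).
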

\begin{proof}
First note that by triangle inequalities we have that up to a constant
\begin{align*}
 \|(0,\pi_h (v_h \varphi))\|_s \leq{}& \|\mu^{\frac12} \nabla (\pi_h - 1) (v_h
\varphi))\|_\Omega + \|\mu^{\frac12} \nabla (v_h
\varphi))\|_\Omega \\
&+ (|\beta|+ \mu h^{-1})^{\frac12} (\|(\pi_h-1) (v_h \varphi)\|_{\partial \Omega}
+ \|v_h\varphi\|_{\partial \Omega} )\\
&+ s_\Omega(\pi_h (v_h \varphi), \pi_h (v_h \varphi))^{\frac12}.
\end{align*}
We bound these terms line by line. Using \eqref{eq_gradient_projection}, \eqref{eq:com_weight}, \eqref{eq:grad_weight} and \eqref{eq:lem_weight_1} we bound the first line by
\begin{equation*}
|\beta|^{\frac12} h^{\frac12} \|\nabla  (i_h-1) (v_h \varphi)\|_{\Omega}
+\|\mu^{\frac12} \nabla v_h \varphi\|_\Omega + 2 |\beta|^{\frac12} (h^\frac12 + \lambda^{-1}) \|v_h \varphi\|_\Omega
\le C \tnorm{v_h}_\varphi.
\end{equation*}
For the second line, using a global trace inequality and the stability of the projection we have
\begin{equation*}
(|\beta|+ \mu h^{-1})^{\frac12} \|(\pi_h - 1) (v_h \varphi)\|_{\partial \Omega}
\leq  C (|\beta|+ \mu h^{-1})^{\frac12} \|v_h \varphi\|_{\Omega}
\leq C |\beta|^{\frac12} \|v_h \varphi^{\frac12}\|_{\Omega}. 
\end{equation*}
Splitting the boundary into inflow and outflow and using \eqref{eq:bound_inflow}, we have 
$$
(|\beta|+ \mu h^{-1})^{\frac12} \|v_h \varphi\|_{\partial \Omega}
\leq C |\beta|^{\frac12} \|v_h \varphi\|_{\partial \Omega}
\leq C \|(v_h,0)\|_s + C\tnorm{v_h}_\varphi.
$$
For the contribution of the jump term, we insert $i_h$ and bound
\begin{align}
\begin{split}\label{eq:jump_projection}
s_\Omega(\pi_h (v_h \varphi), \pi_h (v_h \varphi))^{\frac12}
\leq{}& s_\Omega((\pi_h - i_h) (v_h \varphi), (\pi_h - i_h) (v_h\varphi))^{\frac12}\\
&+ s_\Omega((i_h-1) (v_h \varphi), (i_h - 1) (v_h \varphi))^{\frac12}\\
&+ s_\Omega(v_h \varphi, v_h \varphi)^{\frac12}.
\end{split}
\end{align}
We first observe that using \eqref{eq:trace_grad} and \eqref{eq:gradient_projection_interpolator}, we can bound the first term by
\begin{align*}
s_\Omega((\pi_h - i_h) (v_h \varphi), (\pi_h - i_h) (v_h\varphi))^{\frac12}
&\leq |\beta|^{\frac12} h^{\frac12} \| \nabla (\pi_h - i_h)  (v_h\varphi)\|_{\Omega} \\
&\leq |\beta|^{\frac12} h^{\frac12} \| \nabla (i_h - 1)  (v_h\varphi)\|_{\Omega}\\
&\leq C |\beta|^{\frac12} h^{\frac12} \|\nabla \varphi\|_{\infty,\Omega} \|v_h\|_\Omega\\
&\leq C |\beta|^{\frac12} (h^{\frac12}+\lambda^{-1}) \| v_h \varphi \|_\Omega,
\end{align*}
where for the last two inequalities we used the discrete commutator property \cref{lem:disc_com} together with the $\varphi$-bounds \eqref{eq:grad_weight} and \eqref{eq:lem_weight_1}.
Since $\varphi$ is Lipschitz continuous on $K$, $\varphi|_F$ is also Lipschitz continuous, and so $\varphi|_F \in W^{1,\infty}(F)$. The restriction of the nodal interpolant on $K$ onto $F$ gives the nodal interpolant on $F$, hence applying \cref{lem:disc_com} to $F$ instead of $K$ we have the discrete commutator estimate 
$$
h \|n \cdot \nabla (i_h-1) (v_h \varphi)\|_F
\leq C h |\varphi|_{W^{1,\infty}(K)} \|v_h\|_F
\leq C (h^{\frac12}+\lambda^{-1}) \| v_h \varphi\|_K,
$$
where in the last step we used \eqref{eq:grad_weight} and \eqref{eq:lem_weight_1} together with a discrete trace inequality.
After summation we have that
$$
s_\Omega((i_h-1) (v_h \varphi), (i_h - 1) (v_h \varphi))^{\frac12} \leq C \tnorm{v_h}_\varphi.
$$
Finally we use the trivial bound (since $|\varphi| < 1$)
$$
s_\Omega(v_h \varphi, v_h \varphi)^{\frac12} \leq s_\Omega(v_h, v_h)^{\frac12}.
$$
We conclude the proof by summing up the above contributions.
\end{proof}

We can now prove the following error estimate showing that, in the zone ${\mathring \omega_\beta}$ where we have stability, the convergence in the $L^2$-norm is of order $\mathcal{O}(h^{\frac32})$ on unstructured meshes, which is known to be optimal.
\begin{theorem}\label{thm:error_weighted_downstream}
Let $u \in H^2(\Omega)$ be the solution of \eqref{eq:intro_model_problem}
and $(u_h,z_h) \in [V_h]^2$ the solution to \eqref{eq:method_FEM}. Then there exists $h_0>0$ such that for all $h<h_0$ with $Pe(h) \gtrsim 1$ there holds
$$
\tnorm{u - u_h}_\varphi \leq C (|\beta|^{\frac12} h^{\frac32} |u|_{H^2(\Omega)} + |\beta|^{\frac12} h^{-\frac12} \|\delta\|_\omega ).
$$
\begin{proof}
Let $e_h = \pi_h u -  u_h \in V_h$, then $u - u_h = u- \pi_h u + e_h$.
By \cref{cor:weight_control_projection} there exists $\alpha>0$ such that
\begin{equation*}
\alpha \tnorm{e_h}_\varphi^2  \leq a_h(e_h, \pi_h (e_h\varphi) ) + C \gamma^{-1} \|(e_h,0)\|^2_s.
\end{equation*}
By Cauchy-Schwarz combined with \cref{lem:weight_control_lb} and Young's inequality
\begin{equation*}
-s_*(z_h, \pi_h (e_h\varphi)) \leq C \varepsilon_1^{-1} \|(0,z_h)\|^2_s + \varepsilon_1(\tnorm{e_h}_\varphi^2 + \|(e_h,0)\|^2_s),
\end{equation*}
for some $0 < \varepsilon_1 < \alpha/2$, hence
$$
\frac{\alpha}{2} \tnorm{e_h}_\varphi^2  \leq a_h(e_h, \pi_h (e_h\varphi))
+  s_*(z_h, \pi_h (e_h\varphi))  + C \varepsilon_1^{-1} \|(0,z_h)\|^2_s + C \|(e_h,0)\|^2_s.
$$
Applying the first equality of the consistency \cref{lem:consist} we obtain
\begin{equation}\label{eq:error_weight_stab}
\frac{\alpha}{2} \tnorm{e_h}_\varphi^2  \leq a_h(\pi_h u - u, \pi_h (e_h\varphi) )
+ C \varepsilon_1^{-1} \|(0,z_h)\|^2_s + C\|(e_h,0)\|^2_s.
\end{equation}
Since $\pi_h u - u \in V_h^\perp$ we may apply \cref{lem:cont} to bound
$$
a_h(\pi_h u - u, \pi_h (e_h\varphi) ) \leq C \|\pi_h u - u\|_\sharp \|(0,\pi_h (e_h\varphi))\|_s.
$$
From \cref{lem:weight_control_lb} and Young's inequality we thus have that for some $\varepsilon_2>0$,
\begin{equation*}
a_h(\pi_h u - u, \pi_h (e_h \varphi))
\leq  C ((1 + \varepsilon_2^{-1}) \|\pi_h u - u\|_\sharp^2 + \|(e_h,0)\|_s^2 + \varepsilon_2 \tnorm{e_h}_\varphi^2).
\end{equation*}
Taking $\varepsilon_2 < \alpha/4$ and combining the above bound with \eqref{eq:error_weight_stab} we see that
$$
\frac{\alpha}{4} \tnorm{e_h}_\varphi^2
\leq C((1+ \varepsilon_2^{-1}) \|\pi_h u - u\|_\sharp^2 + (1+\varepsilon_1^{-1})\|(e_h,z_h)\|_s^2).
$$
Since $\varepsilon_{1,2}$ are independent of $h$ we can absorb them in the generic constant $C$ and using the approximation inequality \eqref{eq:approx} together with \cref{prop:conv_stab}, we conclude that
\begin{align*}
\tnorm{e_h}_\varphi &\leq C (\mu^{\frac12} h + |\beta|^{\frac12} h^{\frac32}) (|u|_{H^2(\Omega)} + h^{-2} \|\delta\|_\omega) \\
&\leq C (|\beta|^{\frac12} h^{\frac32} |u|_{H^2(\Omega)} + |\beta|^{\frac12} h^{-\frac12} \|\delta\|_\omega ),
\end{align*}
where we used that $Pe(h)>1$.
\end{proof}
\end{theorem}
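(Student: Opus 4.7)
The plan is to split the error as $u - u_h = (u - \pi_h u) + e_h$ with $e_h := \pi_h u - u_h \in V_h$. Since the interpolation error $u - \pi_h u$ is already controlled by \eqref{eq:approx}, everything reduces to bounding $\tnorm{e_h}_\varphi$. To do this, I will use $\pi_h(e_h \varphi)$ as a ``weighted'' test function in the primal equation of \eqref{eq:method_FEM}, exploiting the fact that all the weighted coercivity machinery of Section 4 has been built precisely for this choice.

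The first step is to invoke \cref{cor:weight_control_projection} with $v_h = e_h$, producing a lower bound $\alpha \tnorm{e_h}_\varphi^2 \leq a_h(e_h, \pi_h(e_h \varphi)) + C\|(e_h,0)\|_s^2$. Next, the first consistency identity of \cref{lem:consist}, applied with $w_h = \pi_h(e_h \varphi)$, rewrites the form as
$$ a_h(e_h, \pi_h(e_h\varphi)) \;=\; a_h(\pi_h u - u, \pi_h(e_h\varphi)) \;+\; s_*(z_h, \pi_h(e_h\varphi)). $$
Because $\pi_h u - u \in V_h^\perp$, the consistency residual is tailor-made for \cref{lem:cont}, giving a bound in terms of $\|\pi_h u - u\|_\sharp \cdot \|(0, \pi_h(e_h\varphi))\|_s$. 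The dual stabilizer term is handled by Cauchy--Schwarz in the $s_*$ inner product.

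The decisive input is now \cref{lem:weight_control_lb}, which converts $\|(0, \pi_h(e_h \varphi))\|_s$ back into a combination of $\tnorm{e_h}_\varphi$ and $\|(e_h,0)\|_s$. Applying Young's inequality with parameters $\varepsilon_{1},\varepsilon_2$ chosen strictly smaller than $\alpha/2$ and $\alpha/4$ respectively, I can absorb all $\tnorm{e_h}_\varphi^2$ terms on the right into the left-hand side, leaving
$$ \tnorm{e_h}_\varphi^2 \;\leq\; C\bigl(\|\pi_h u - u\|_\sharp^2 + \|(e_h, z_h)\|_s^2\bigr). $$
Plugging in the approximation bound \eqref{eq:approx} for the first term and \cref{prop:conv_stab} for the second yields, in both cases, the rate $(\mu^{1/2} h + |\beta|^{1/2} h^{3/2})$ on the regularity side and $(\mu^{1/2} h + |\beta|^{1/2} h^{3/2})\, h^{-2} \|\delta\|_\omega$ on the data-noise side. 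Finally, the assumption $Pe(h) \gtrsim 1$ means $\mu \lesssim |\beta| h$, so $\mu^{1/2} h \lesssim |\beta|^{1/2} h^{3/2}$, and the second contribution collapses to $|\beta|^{1/2} h^{-1/2} \|\delta\|_\omega$, matching the stated estimate. A triangle inequality against $\|u - \pi_h u\|$ absorbed in the same rate concludes the bound on $\tnorm{u - u_h}_\varphi$.

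The bookkeeping of Young's inequality is the main delicate point: one must verify that the thresholds $\varepsilon_1, \varepsilon_2$ can be taken independently of $h$, $\mu$, and $|\beta|$ so that the absorption is legitimate, and that all remaining constants can be lumped into the generic $C$. Beyond that, the argument is a fairly standard orchestration of the tools---weighted coercivity (\cref{cor:weight_control_projection}), weighted norm equivalence (\cref{lem:weight_control_lb}), orthogonal consistency (\cref{lem:consist} + \cref{lem:cont}), and a priori regularization control (\cref{prop:conv_stab})---established in the preceding sections.
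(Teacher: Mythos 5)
Your proposal is correct and follows essentially the same route as the paper: weighted coercivity via \cref{cor:weight_control_projection}, the consistency identity combined with \cref{lem:cont} on the $V_h^\perp$ residual, conversion of $\|(0,\pi_h(e_h\varphi))\|_s$ back to the triple norm via \cref{lem:weight_control_lb}, absorption by Young's inequality with $h$-independent $\varepsilon_{1,2}$, and the final rates from \eqref{eq:approx}, \cref{prop:conv_stab} and $Pe(h)\gtrsim 1$. The only (immaterial) discrepancy is the sign of the $s_*(z_h,\cdot)$ term in your consistency identity --- \cref{lem:consist} gives $a_h(e_h,w_h)=a_h(\pi_h u-u,w_h)-s_*(z_h,w_h)$ --- but since that term is estimated by Cauchy--Schwarz in absolute value the argument is unaffected.
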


\subsection{Upstream estimates}\label{sec:error_estimates_upstream}
In this case we consider $\beta = (\beta_1,0)$ with $\beta_1 < 0$ and the data set $$\omega = (0,x)\times (y^{-}, y^{+})$$ touching part of the outflow boundary $\partial \Omega^+$. We must choose the weight function differently and this time we take a negative $\varphi$ given by \eqref{eq:weight_up}
\begin{equation*}
\varphi:= \psi_1 \psi_2 \in (-1,0).
\end{equation*}
It seems that in this case we can not simultaneously get control of the $L^2$-norm and the weighted $H^1$-norm and we have to sacrifice the latter since it is not uniform in $\mu$. We now take the weighted triple norm to be
\begin{equation}\label{eq:triple_norm_up}
\tnorm{v_h}_\varphi^2 := \| |\beta|^\frac12 v_h |\varphi|^{\frac12} \|_{\Omega}^2
+ \||\beta \cdot n|^{\frac12} v_h |\varphi|^{\frac12}\|_{ \partial \Omega^-}^2,
\end{equation}
and rederive the results obtained in \cref{sec:error_estimates_downstream}, aiming for a local error estimate. Since $\varphi \in (-1,0)$, we will use that $\| \cdot \varphi \|_\Omega \leq \| \cdot |\varphi|^\frac12 \|_\Omega$.

We start with an analogue of \cref{lem:weight_control} by taking $v_h \varphi$ as a test function in the weak bilinear form $a_h$ and notice that since $\varphi<0$ we now have that
$$
\frac12 \left( \| |\beta \cdot n|^{\frac12} v_h |\varphi|^{\frac12} \|_{\partial \Omega^-}^2  + |\beta| \|v_h |\varphi|^{\frac12}\|^2_\Omega \right)
=(\beta \cdot \nabla v_h, v_h \varphi)_\Omega + \frac12 \| |\beta \cdot n|^{\frac12} v_h |\varphi|^{\frac12} \|_{\partial \Omega^+}^2.
$$
Arguing as previously in \eqref{eq:bound_inflow} but now for the outflow boundary, we obtain the bound
\begin{align}
\begin{split}\label{eq:bound_outflow}
\| |\beta \cdot n|^{\frac12} v_h |\varphi|^{\frac12}\|_{\partial \Omega^+}
&\leq C |\beta|^{\frac12} (\| v_h |\varphi|^{\frac12}\|_{\partial \Omega^+ \cap \omega_\beta} + \| v_h |\varphi|^{\frac12}\|_{\partial \Omega^+ \setminus \omega_\beta})\\
&\leq C |\beta|^{\frac12} h^{-\frac12} \| v_h \|_{\omega} +  C |\beta|^{\frac12} h^{\frac32} \|v_h\|_{H^1(\Omega)}\\
&\leq C \gamma^{-\frac12} \|(v_h,0)\|_s,
\end{split}
\end{align}
and thus
\begin{equation}\label{eq:triple_conv_stab_up}
 \frac12 \left( |\beta| \|v_h|\varphi|^{\frac12}\|^2_\Omega
 + \| |\beta \cdot n|^{\frac12} v_h |\varphi|^{\frac12} \|^2_{\partial \Omega^-} \right)
 \leq  (\beta \cdot \nabla v_h, v_h
\varphi)_\Omega+ C \gamma^{-1} \|(v_h,0)\|^2_s.
\end{equation}
For the diffusion term we no longer have any positive contribution due to the change in sign of the weight function $\varphi$, since now
$$
(\mu \nabla v_h,\nabla (v_h \varphi))_{\Omega}
= - \|\mu^{\frac12} \nabla v_h |\varphi|^{\frac12}\|_{\Omega}^2 + (\mu \nabla v_h, v_h \nabla \varphi)_\Omega.
$$
We must therefore control this entirely using the stabilization.
Integrating by parts and using the weighted trace inequality \eqref{eq:lem_weight_trace}
\begin{align*}
(\mu \nabla v_h,\nabla (v_h \varphi))_{\Omega} - \left< \mu \nabla v_h \cdot n , v_h \varphi\right>_{\partial \Omega}
&= \sum_{F \in \mathcal{F}_i} \int_{F} \mu \jump{\nabla v_h \cdot n} v_h \varphi ~\mbox{d}s \\
& \leq C \gamma^{-\frac12} s_\Omega(v_h,v_h)^{\frac12} \mu^\frac12 h^{-1} \|v_h \varphi\|_\Omega \\
& \leq C \gamma^{-\frac12} s_\Omega(v_h,v_h)^{\frac12} \mu^\frac12 h^{-1} \|v_h |\varphi|^{\frac12}\|_\Omega.
\end{align*}
To bound this by the triple norm we can simply use that $|\varphi| < 1$ and $\mu \le |\beta| h$, giving that $\mu^\frac12 h^{-1} |\varphi|^{\frac12} \leq |\beta|^\frac12 h^{-\frac12}.$
Hence we have that for some $\varepsilon >0$,
$$
|(\mu \nabla v_h,\nabla (v_h \varphi))_{\Omega} - \left< \mu \nabla v_h \cdot n , v_h \varphi\right>_{\partial \Omega}|
\leq C \varepsilon^{-1} \gamma^{-1} h^{-1} s_\Omega(v_h,v_h) + C \varepsilon \tnorm{v_h}_\varphi^2.
$$
However, when $Pe(h) h > 1$ one can obtain a better estimate due to $\mu^\frac12 h^{-1} |\varphi|^{\frac12} \leq |\beta|^\frac12,$
which gives that	
$$
|(\mu \nabla v_h,\nabla (v_h \varphi))_{\Omega} - \left< \mu \nabla v_h \cdot n , v_h \varphi\right>_{\partial \Omega}|
\leq C \varepsilon^{-1} \gamma^{-1} s_\Omega(v_h,v_h) + C \varepsilon \tnorm{v_h}_\varphi^2.
$$
Summing these contributions we obtain the following result corresponding to \cref{lem:weight_control}.

\begin{lemma}\label{lem:weight_control_upstream}
	There exists $\alpha>0$ such that for all $v_h \in V_h$ we have
	$$
	\alpha \tnorm{v_h}_\varphi^2 \leq  a_h(v_h, v_h \varphi) + C h^{-1} \|(v_h,0)\|^2_s,\text{ when } 1 \lesssim Pe(h) < h^{-1},
	$$
	and
	$$
	\alpha \tnorm{v_h}_\varphi^2 \leq  a_h(v_h, v_h \varphi) + C \|(v_h,0)\|^2_s,\text{ when } Pe(h) > h^{-1}.
	$$
\end{lemma}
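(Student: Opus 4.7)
The plan is to split $a_h(v_h, v_h \varphi)$ into its convective and diffusive contributions, then invoke the two estimates that the author already assembled in the text preceding the lemma statement, and finally absorb the terms proportional to $\tnorm{v_h}_\varphi^2$ into the left-hand side by a suitable choice of Young's parameter.

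Concretely, I would start from the identity
$$
a_h(v_h, v_h\varphi) = (\beta\cdot\nabla v_h, v_h\varphi)_\Omega + (\mu\nabla v_h, \nabla(v_h\varphi))_\Omega - \langle \mu\nabla v_h\cdot n, v_h\varphi\rangle_{\partial\Omega}.
$$
The convective part is handled by rearranging \eqref{eq:triple_conv_stab_up} into the lower bound
$$
(\beta\cdot\nabla v_h, v_h\varphi)_\Omega \geq \tfrac12 \tnorm{v_h}_\varphi^2 - C\gamma^{-1}\|(v_h,0)\|_s^2.
$$
For the diffusion contribution, after integration by parts one is left with a sum over interior faces of $\mu\jump{\nabla v_h\cdot n} v_h\varphi$, which the text has already bounded using the weighted trace inequality \eqref{eq:lem_weight_trace} and Young's inequality: in the regime $1\lesssim Pe(h)<h^{-1}$ one uses $\mu^{1/2}h^{-1}|\varphi|^{1/2}\leq |\beta|^{1/2}h^{-1/2}$ to arrive at
$$
|(\mu\nabla v_h, \nabla(v_h\varphi))_\Omega - \langle \mu\nabla v_h\cdot n, v_h\varphi\rangle_{\partial\Omega}| \leq C\varepsilon^{-1}\gamma^{-1}h^{-1} s_\Omega(v_h,v_h) + C\varepsilon\tnorm{v_h}_\varphi^2,
$$
while in the regime $Pe(h)>h^{-1}$ the sharper inequality $\mu^{1/2}h^{-1}|\varphi|^{1/2}\leq |\beta|^{1/2}$ yields the same bound without the factor $h^{-1}$ on $s_\Omega(v_h,v_h)$.

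Combining these two estimates, picking $\varepsilon$ small enough so that $C\varepsilon \leq 1/4$ and absorbing the resulting $C\varepsilon\tnorm{v_h}_\varphi^2$ into the left-hand side, and noting that $s_\Omega(v_h,v_h)\leq \|(v_h,0)\|_s^2$, produces
$$
\tfrac14 \tnorm{v_h}_\varphi^2 \leq a_h(v_h, v_h\varphi) + C h^{-1}\|(v_h,0)\|_s^2
$$
in the first regime and the analogous bound without the $h^{-1}$ in the second. Setting $\alpha = 1/4$ delivers both claims.

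There is no genuine obstacle here — the author has already done the substantive work (the sign-careful treatment of the negative weight, the weighted trace inequality that converts diffusion into jump terms, and the dichotomy between the two Péclet regimes). The only point to keep in mind is that because $\varphi<0$ the diffusion form $(\mu\nabla v_h,\nabla(v_h\varphi))_\Omega$ contributes with the wrong sign, so it must be entirely controlled by the stabilization $s_\Omega$, which is why the stabilization constant carries the extra $h^{-1}$ factor whenever $\mu$ is not already so small that $Pe(h)>h^{-1}$. The rest is bookkeeping and absorption.
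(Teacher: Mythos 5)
Your proposal is correct and follows exactly the paper's own argument, which is laid out in the text immediately preceding the lemma: the convective part is handled via \eqref{eq:triple_conv_stab_up}, the sign-flipped diffusion part is converted to interior jump terms and absorbed into $s_\Omega$ with the two P\'eclet-dependent scalings, and the remaining weighted terms are absorbed by Young's inequality. Nothing is missing; the paper simply states ``summing these contributions we obtain the following result'' in place of the final bookkeeping you spell out.
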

Again, we can refine the control over the triple norm $\tnorm{v_h}_\varphi$ by taking the projection $\pi_h (v_h \varphi) \in V_h$ as a test function and we obtain corresponding results.
\begin{corollary}\label{cor:weight_control_projection_upstream}
	There exists $\alpha>0$ such that for all $v_h \in V_h$ we have
	$$
	\alpha \tnorm{v_h}_\varphi^2 \leq  a_h(v_h, \pi_h(v_h \varphi)) + C h^{-1} \|(v_h,0)\|^2_s,\text{ when } 1 \lesssim Pe(h) < h^{-1},
	$$
	and
	$$
	\alpha \tnorm{v_h}_\varphi^2 \leq  a_h(v_h, \pi_h(v_h \varphi)) + C  \|(v_h,0)\|^2_s,\text{ when } Pe(h) > h^{-1}.
	$$
\begin{proof}
	The argument in the proof of \cref{cor:weight_control_projection} remains valid with the remark that we now use the inequality $|\varphi| < |\varphi|^\frac12$.
\end{proof}
\end{corollary}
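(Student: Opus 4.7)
The plan is to mirror the proof of \cref{cor:weight_control_projection} almost verbatim, since the only change is that $\varphi$ is now negative and the triple norm has been weakened (no weighted $H^1$ control). First I would write
$$
a_h(v_h, \pi_h(v_h\varphi)) = a_h(v_h, v_h\varphi) + a_h(v_h, (\pi_h - 1)(v_h\varphi)),
$$
apply the upstream lower bound \cref{lem:weight_control_upstream} to the first summand, and argue that the second summand (the ``projection consistency error'') can be absorbed into $\tfrac{\alpha}{2}\tnorm{v_h}_\varphi^2$ plus a constant times the stabilization term, with the same $h^{-1}$ versus $h^0$ dichotomy as in \cref{lem:weight_control_upstream}.

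To estimate $a_h(v_h, (\pi_h-1)(v_h\varphi))$ I would split it into the three contributions $I + II + III$ as in the proof of \cref{cor:weight_control_projection}. The convection part $I$ is controlled using the orthogonality of $\pi_h$ combined with the discrete approximation estimate \eqref{eq:beta_approx}, which gives $I \le C|\beta|^{1/2}\gamma^{-1/2}\|(v_h,0)\|_s\, h^{-1/2}\|(i_h-1)(v_h\varphi)\|_\Omega$. The diffusion-bulk and boundary parts $II+III$ reduce (after integration by parts, since $\Delta v_h=0$ elementwise) to the jump term $\sum_F\int_F \mu \jump{\nabla v_h\cdot n}(\pi_h-1)(v_h\varphi)\,ds$; using \eqref{eq:trace_cont}, \eqref{eq:gradient_projection_interpolator} and the assumption $Pe(h)>1$ yields the same bound as in the downstream case, namely $C|\beta|^{1/2}\gamma^{-1/2}s_\Omega(v_h,v_h)^{1/2}(h^{-1/2}\|(i_h-1)(v_h\varphi)\|_\Omega + h^{1/2}\|\nabla(i_h-1)(v_h\varphi)\|_\Omega)$.

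The crucial step is then applying the discrete commutator property (\cref{lem:disc_com}) together with the gradient bound \eqref{eq:grad_weight} and \cref{lem:weight} to get
$$
h^{-1/2}\|(i_h-1)(v_h\varphi)\|_\Omega + h^{1/2}\|\nabla(i_h-1)(v_h\varphi)\|_\Omega \le C(h^{1/2}+\lambda^{-1})\|v_h\varphi\|_\Omega.
$$
Here is where the sign change enters: since $\varphi\in(-1,0)$ we use $\|v_h\varphi\|_\Omega \le \|v_h|\varphi|^{1/2}\|_\Omega$ instead of $\|v_h\varphi\|_\Omega\le\|v_h\varphi^{1/2}\|_\Omega$. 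After Young's inequality this produces a term $|\beta|(h^{1/2}+\lambda^{-1})^2\|v_h|\varphi|^{1/2}\|_\Omega^2$ which, for $h$ small and $\lambda$ large, is bounded by $\tfrac{\alpha}{2}\tnorm{v_h}_\varphi^2$ since $\||\beta|^{1/2} v_h|\varphi|^{1/2}\|_\Omega^2$ is the first term of the upstream triple norm \eqref{eq:triple_norm_up}. The residual $\|(v_h,0)\|_s^2$ contribution enters with either a factor $h^{-1}$ or $h^0$, exactly matching the two regimes of \cref{lem:weight_control_upstream}, and the conclusion follows by renaming $\alpha/2$ as $\alpha$.

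The only place where I would expect any subtlety is the verification that the convection-based estimate of $I$ and the jump-based estimate of $II+III$ still close, because the upstream triple norm no longer contains a $\mu^{1/2}\nabla v_h$ piece; however, this is not needed, since the bounds on $I,II,III$ are all in terms of $\|(v_h,0)\|_s$ and $\|v_h\varphi\|_\Omega$, and the latter is absorbed purely by the $L^2$ component of $\tnorm{\cdot}_\varphi$. Thus no new idea beyond the sign-aware inequality $|\varphi|<|\varphi|^{1/2}$ is required.
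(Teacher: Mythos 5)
Your proposal is correct and follows exactly the route the paper intends: the paper's own proof is the one-line remark that the downstream argument of \cref{cor:weight_control_projection} carries over once $\varphi<\varphi^{1/2}$ is replaced by $|\varphi|<|\varphi|^{1/2}$, which is precisely the sign-aware step you isolate, and your observation that the missing $\mu^{1/2}\nabla v_h$ component of the upstream triple norm is never needed (since $I$, $II$, $III$ are absorbed by $\|(v_h,0)\|_s$ and the $L^2$ part of $\tnorm{\cdot}_\varphi$ alone) is the correct justification for why the carry-over works.
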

\begin{lemma}\label{lem:weight_control_lb_upstream}
	For all $v_h \in V_h$ there holds
	$$
	\|(0,\pi_h (v_h \varphi))\|^2_s \leq C (h^{-1} \tnorm{v_h}^2_\varphi + \|(v_h,0)\|^2_s),\text{ when } 1 \lesssim Pe(h) < h^{-1},
	$$
	and
	$$
	\|(0,\pi_h (v_h \varphi))\|^2_s \leq C (\tnorm{v_h}^2_\varphi + \|(v_h,0)\|^2_s),\text{ when } Pe(h) > h^{-1}.
	$$
	\begin{proof}
	We follow the proof of \cref{lem:weight_control_lb}	and we focus on the bounds that are now different. As before, by the triangle inequality we have that up to a constant
	\begin{align*}
	\|(0,\pi_h (v_h \varphi))\|_s \leq{}& \|\mu^{\frac12} \nabla (\pi_h - 1) (v_h	\varphi))\|_\Omega + \|\mu^{\frac12} v_h \nabla \varphi\|_\Omega + \|\mu^{\frac12} \nabla v_h \varphi \|_\Omega \\
	&+ (|\beta|+ \mu h^{-1})^{\frac12} (\|(\pi_h-1) (v_h \varphi)\|_{\partial \Omega}
	+ \|v_h\varphi\|_{\partial \Omega} )\\
	&+ s_\Omega(\pi_h (v_h \varphi), \pi_h (v_h \varphi))^{\frac12}.
	\end{align*}
	The first two terms can be bounded by $C \tnorm{v_h}_\varphi$ as previously. For the third one, we can use the inverse inequality \eqref{eq:inverse_ineq} and \eqref{eq:lem_weight_1} to obtain
	$$
	\|\mu^{\frac12} \nabla v_h \varphi \|_\Omega \leq C \mu^{\frac12} h^{-1} \|\varphi\|_{\infty,\Omega} \|v_h \|_\Omega
	\leq C \mu^{\frac12} h^{-1} \|v_h \varphi \|_\Omega
	\leq C \mu^{\frac12} h^{-1} \|v_h |\varphi|^\frac12 \|_\Omega.
	$$
	Hence we have that
	$$\|\mu^{\frac12} \nabla v_h \varphi \|_\Omega \leq C h^{-\frac12} \tnorm{v_h}_\varphi, \text{ when } 1 \lesssim Pe(h) < h^{-1},$$
	and
	$$\|\mu^{\frac12} \nabla v_h \varphi \|_\Omega \leq C \tnorm{v_h}_\varphi, \text{ when } Pe(h)>h^{-1}.$$
	Arguing as previously, we can bound the second line by $C \tnorm{v_h}_\varphi$ using \eqref{eq:bound_outflow} instead of \eqref{eq:bound_inflow}. We conclude the proof by recalling the estimate \eqref{eq:jump_projection} for the jump term and the subsequent bounds.
	\end{proof}
\end{lemma}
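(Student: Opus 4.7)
The plan is to follow the blueprint of the downstream analogue, \cref{lem:weight_control_lb}, and identify the single place where the change of weight sign (and the correspondingly weaker triple norm) forces an extra factor. As a first step, I would bound $\|(0,\pi_h(v_h\varphi))\|_s$ by a triangle inequality into the same four groups as in the downstream proof: a piece involving $(\pi_h - 1)(v_h\varphi)$ in $H^1(\Omega)$; a piece $\|\mu^{\frac12}\nabla(v_h\varphi)\|_\Omega$; the boundary terms $(|\beta|+\mu h^{-1})^{\frac12}\|(\pi_h-1)(v_h\varphi)\|_{\partial\Omega}$ and $(|\beta|+\mu h^{-1})^{\frac12}\|v_h\varphi\|_{\partial\Omega}$; and the jump stabilization $s_\Omega(\pi_h(v_h\varphi),\pi_h(v_h\varphi))^{\frac12}$.

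For the $(\pi_h-1)$ and jump-stabilization terms I would reuse verbatim the arguments in the downstream proof: the projection/interpolant comparison \eqref{eq:gradient_projection_interpolator}, the discrete commutator property \cref{lem:disc_com} applied on both elements and faces, and the $\varphi$-bounds \eqref{eq:grad_weight}, \eqref{eq:lem_weight_1}, with the observation that nothing in those manipulations used the sign of $\varphi$, only $|\varphi|\le 1$ and $|\nabla\varphi|\lesssim|\varphi|$. The boundary terms are treated as in the downstream case, except that the splitting on the outflow part is now controlled by the outflow bound \eqref{eq:bound_outflow} instead of its inflow counterpart \eqref{eq:bound_inflow}; the $W^{-1/2,\infty}$-type decay of $\psi_2$ outside $\omega_\beta$ again suppresses the part touching $\partial\Omega\setminus\omega_\beta$.

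The main obstacle, and the only real new point, is the piece $\|\mu^{\frac12}\nabla(v_h\varphi)\|_\Omega$. In the downstream case this was absorbed directly because the triple norm \eqref{eq:triple_norm_down} carried the weighted $H^1$-contribution $\|\mu^{\frac12}\nabla v_h\,\varphi^{\frac12}\|_\Omega$; in the upstream triple norm \eqref{eq:triple_norm_up} this term is gone. The product rule splits the piece into $\|\mu^{\frac12}v_h\nabla\varphi\|_\Omega$, which is bounded by $C\tnorm{v_h}_\varphi$ exactly as before via \eqref{eq:grad_weight}, \eqref{eq:lem_weight_1} and $\mu\le|\beta|h$; and $\|\mu^{\frac12}\varphi\nabla v_h\|_\Omega$, which I would attack by the inverse inequality \eqref{eq:inverse_ineq} combined with \eqref{eq:lem_weight_1}:
\begin{equation*}
\|\mu^{\frac12}\varphi\nabla v_h\|_\Omega \le C\mu^{\frac12}h^{-1}\|\varphi\|_{\infty,\Omega}\|v_h\|_\Omega
\le C\mu^{\frac12}h^{-1}\|v_h\varphi\|_\Omega
\le C\mu^{\frac12}h^{-1}\|v_h|\varphi|^{\frac12}\|_\Omega.
\end{equation*}
The factor $\mu^{\frac12}h^{-1}$ is now the source of the regime split. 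Since $\mu\le|\beta|h$ always, one has $\mu^{\frac12}h^{-1}\le|\beta|^{\frac12}h^{-\frac12}$, giving the pre-asymptotic bound with the extra $h^{-\frac12}$ when $1\lesssim Pe(h)<h^{-1}$. In the stronger regime $Pe(h)>h^{-1}$ one has $\mu<|\beta|h^2$, so $\mu^{\frac12}h^{-1}\le|\beta|^{\frac12}$ and the factor absorbs directly into the $L^2$-piece of $\tnorm{v_h}_\varphi$ without loss.

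Collecting the four groups and taking the maximum of the scaling factors yields both claimed estimates. The only delicate accounting is making sure the $h^{-1/2}$ (respectively $1$) prefactor appears outside $\tnorm{v_h}_\varphi$ rather than inside the $\|(v_h,0)\|_s$ contribution; this is automatic because every appearance of $\|(v_h,0)\|_s$ above comes from the trace-based bounds \eqref{eq:bound_outflow} and the jump/interpolation estimates, none of which involve the $\mu^{\frac12}h^{-1}$ factor.
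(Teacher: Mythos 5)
Your proposal is correct and follows essentially the same route as the paper's proof: the same triangle-inequality decomposition, the identification of $\|\mu^{\frac12}\varphi\nabla v_h\|_\Omega$ as the only genuinely new term (since the upstream triple norm lacks the weighted $H^1$ contribution), the same inverse-inequality bound $\mu^{\frac12}h^{-1}\|v_h|\varphi|^{\frac12}\|_\Omega$, and the same regime split via $\mu \le |\beta|h$ versus $\mu < |\beta|h^2$. The remaining terms are handled exactly as in the paper, by recycling the downstream arguments with \eqref{eq:bound_outflow} in place of \eqref{eq:bound_inflow}.
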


We now prove the weighted error estimate in the upstream case showing that in the stability region $\mathring{\omega}_\beta$ we have quasi-optimal convergence for high P\'eclet numbers and a reduction of the convergence order by $\mathcal{O}(h^\frac12)$ in an intermediate regime.
\begin{theorem}\label{thm:error_weighted_upstream}
	Let $u \in H^2(\Omega)$ be the solution of \eqref{eq:intro_model_problem}
	and $(u_h,z_h) \in [V_h]^2$ the solution to \eqref{eq:method_FEM}, then there holds
	$$
	\tnorm{u - u_h}_\varphi \leq C (|\beta|^{\frac12} h |u|_{H^2(\Omega)} + |\beta|^{\frac12} h^{-1} \|\delta\|_\omega ), \text{ when } 1 \lesssim Pe(h) < h^{-1},
	$$
	and
	$$
	\tnorm{u - u_h}_\varphi \leq C (|\beta|^{\frac12} h^{\frac32} |u|_{H^2(\Omega)} + |\beta|^{\frac12} h^{-\frac12} \|\delta\|_\omega ), \text{ when } Pe(h) > h^{-1}.
	$$
	\begin{proof}
	We combine \cref{lem:weight_control_upstream}, \cref{cor:weight_control_projection_upstream} and \cref{lem:weight_control_lb_upstream} as in the proof of \cref{thm:error_weighted_downstream} and note that the argument holds verbatim when $Pe(h) > h^{-1}$. Observe that when $1 \lesssim Pe(h) < h^{-1}$ we similarly obtain
	for some $\alpha>0$ and $0 < \varepsilon_1 < \alpha/2$,
	\begin{equation}\label{eq:error_weight_stab_upstream}
	\frac{\alpha}{2} \tnorm{e_h}_\varphi^2  \leq a_h(\pi_h u - u, \pi_h (e_h\varphi) )
	+ C \varepsilon_1^{-1} \|(0,z_h)\|^2_s + C h^{-1} \|(e_h,0)\|^2_s.
	\end{equation}
	Since $\pi_h u - u \in V_h^\perp$ we may apply \cref{lem:cont} to bound
	$$
	a_h(\pi_h u - u, \pi_h (e_h\varphi) ) \leq C \|\pi_h u - u\|_\sharp \|(0,\pi_h (e_h\varphi))\|_s.
	$$
	From \cref{lem:weight_control_lb_upstream} and Young's inequality we thus have that for some $\varepsilon_2>0$,
	\begin{equation*}
	a_h(\pi_h u - u, \pi_h (e_h \varphi))
	\leq  C ((1 + \varepsilon_2^{-1} h^{-1}) \|\pi_h u - u\|_\sharp^2 + \|(e_h,0)\|_s^2 + \varepsilon_2 \tnorm{e_h}_\varphi^2).
	\end{equation*}
	Taking $\varepsilon_2 < \alpha/4$ and combining the above bound with \eqref{eq:error_weight_stab_upstream} we see that
	$$
	\frac{\alpha}{4} \tnorm{e_h}_\varphi^2
	\leq C((1+ \varepsilon_2^{-1} h^{-1} ) \|\pi_h u - u\|_\sharp^2 + \varepsilon_1^{-1} h^{-1} \|(e_h,z_h)\|_s^2).
	$$
	Since $\varepsilon_{1,2}$ are independent of $h$ we can absorb them in the constant $C$ and conclude the proof by using the approximation inequality \eqref{eq:approx} and \cref{prop:conv_stab} to obtain that
	\begin{align*}
	\tnorm{e_h}_\varphi &\leq C (\mu^{\frac12} h^{\frac12} + |\beta|^{\frac12} h) (|u|_{H^2(\Omega)} + h^{-2} \|\delta\|_\omega) \\
	&\leq C (|\beta|^{\frac12} h |u|_{H^2(\Omega)} + |\beta|^{\frac12} h^{-1} \|\delta\|_\omega ).
	\end{align*}
	\end{proof}
\end{theorem}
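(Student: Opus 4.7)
The plan is to follow the same skeleton as the proof of \cref{thm:error_weighted_downstream}, substituting the upstream analogues \cref{cor:weight_control_projection_upstream} and \cref{lem:weight_control_lb_upstream} for their downstream counterparts, and carefully tracking the extra $h^{-1}$ factor that appears in the intermediate regime. As before, set $e_h = \pi_h u - u_h$ so that $u - u_h = (u - \pi_h u) + e_h$; the piece $u - \pi_h u$ is controlled directly by \eqref{eq:approx}, leaving the task of estimating $\tnorm{e_h}_\varphi$.

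In the high-P\'eclet regime $Pe(h) > h^{-1}$, \cref{cor:weight_control_projection_upstream} and \cref{lem:weight_control_lb_upstream} take the same form as their downstream counterparts (without an $h^{-1}$ factor), so the argument of \cref{thm:error_weighted_downstream} transfers line by line. Concretely: apply the corollary to obtain $\alpha\tnorm{e_h}_\varphi^2 \leq a_h(e_h,\pi_h(e_h\varphi)) + C\|(e_h,0)\|_s^2$; invoke the first consistency identity of \cref{lem:consist} to rewrite $a_h(e_h,\pi_h(e_h\varphi)) = a_h(\pi_h u - u,\pi_h(e_h\varphi)) - s_*(z_h,\pi_h(e_h\varphi))$; handle the dual term by Cauchy--Schwarz, Young's inequality and \cref{lem:weight_control_lb_upstream}; apply the continuity \cref{lem:cont} to $a_h(\pi_h u - u,\pi_h(e_h\varphi))$ (valid since $\pi_h u - u \in V_h^{\perp}$); and close with \eqref{eq:approx} and \cref{prop:conv_stab}.

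In the intermediate regime $1 \lesssim Pe(h) < h^{-1}$ the same chain is followed, but each use of \cref{cor:weight_control_projection_upstream} and \cref{lem:weight_control_lb_upstream} introduces an $h^{-1}$. After absorbing the $s_*$ term one arrives at
$$
\tfrac{\alpha}{2}\tnorm{e_h}_\varphi^2 \leq a_h(\pi_h u - u,\pi_h(e_h\varphi)) + C\varepsilon_1^{-1}\|(0,z_h)\|_s^2 + Ch^{-1}\|(e_h,0)\|_s^2.
$$
Bounding $a_h(\pi_h u - u,\pi_h(e_h\varphi)) \leq C\|\pi_h u - u\|_\sharp \|(0,\pi_h(e_h\varphi))\|_s$ via \cref{lem:cont}, then inserting the upstream bound $\|(0,\pi_h(e_h\varphi))\|_s^2 \leq C(h^{-1}\tnorm{e_h}_\varphi^2 + \|(e_h,0)\|_s^2)$ and a Young with parameter $\varepsilon_2$ small enough to absorb $\varepsilon_2 h^{-1}\tnorm{e_h}_\varphi^2$ on the left, forces the coefficient of $\|\pi_h u - u\|_\sharp^2$ to pick up a factor $h^{-1}$. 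Combining \eqref{eq:approx} and \cref{prop:conv_stab} then gives
$$
\tnorm{e_h}_\varphi \leq C(\mu^{1/2}h^{1/2} + |\beta|^{1/2}h)\bigl(|u|_{H^2(\Omega)} + h^{-2}\|\delta\|_\omega\bigr),
$$
and using $Pe(h) \gtrsim 1$, i.e.\ $\mu^{1/2} \lesssim |\beta|^{1/2} h^{1/2}$, produces the stated pre-asymptotic bound.

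The main obstacle is keeping the bookkeeping of the $h^{-1}$ factors consistent in the intermediate regime: the $h^{-1}$ inside \cref{lem:weight_control_lb_upstream} forces the Young parameters to be calibrated so that the $h^{-1}\tnorm{e_h}_\varphi^2$ contribution can be absorbed on the left, and this calibration is precisely what transfers the $h^{-1}$ onto $\|\pi_h u - u\|_\sharp^2$ and $\|(e_h,z_h)\|_s^2$, resulting in the $h^{1/2}$ reduction of the convergence rate compared to the downstream case. The high-P\'eclet case sidesteps this because the degenerate diffusion bound $\mu^{1/2}h^{-1}|\varphi|^{1/2} \leq |\beta|^{1/2}$ removes the extra $h^{-1}$, and the proof proceeds verbatim as in \cref{thm:error_weighted_downstream}.
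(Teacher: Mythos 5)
Your proposal is correct and follows essentially the same route as the paper: the downstream argument of \cref{thm:error_weighted_downstream} repeated with \cref{cor:weight_control_projection_upstream} and \cref{lem:weight_control_lb_upstream}, verbatim for $Pe(h)>h^{-1}$ and with the $h^{-1}$ factors tracked onto $\|\pi_h u - u\|_\sharp^2$ and $\|(e_h,z_h)\|_s^2$ in the intermediate regime. One wording caution: the Young parameter $\varepsilon_2$ must stay fixed (independent of $h$), with the $h^{-1}$ from \cref{lem:weight_control_lb_upstream} transferred entirely to the coefficient $\varepsilon_2^{-1}h^{-1}$ of $\|\pi_h u - u\|_\sharp^2$ rather than literally absorbing a term $\varepsilon_2 h^{-1}\tnorm{e_h}_\varphi^2$ on the left (which would force $\varepsilon_2\sim h$ and degrade the rate); your stated conclusion shows you intend the former.
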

The error bounds in this section contain a global Sobolev norm.
This may be large in the presence of layers and it would be optimal to replace it with a local regularity measure.
However, it is not clear how to reconcile this goal with the ill-posed character of the problem.
Inserting everywhere the weight function as in the well-posed case \cite{BGL09}, would perturb the stability of the optimality system and, due to the lack of physical coercivity, the residual terms would then be hard to control.
One could also consider a reduced transport equation (with zero diffusivity)
and define the far field, where the solution is unknown, by a smooth extension, but is not obvious how this could be constructed in our context, and without requiring regularity for the right-hand side.
Nonetheless, in numerical experiments we observe a good regularity behaviour, i.e. layers do not pollute the solution in the stability zone, as shown in \cref{fig:layer_diffusion,fig:layer_convection}.
\section{Numerical experiments}
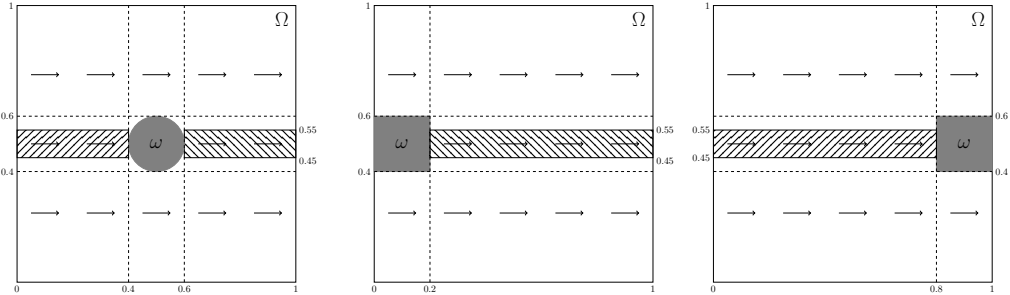
\begin{figure}[H]
	\begin{subfigure}{0.32\textwidth}
		\resizebox{\textwidth}{!}{
			\centering
			\begin{tikzpicture}
				\draw (0,0) rectangle (10,10);
				\draw (0,0) node[align=center, below] {0};
				\draw (4,0) node[align=center, below] {0.4};
				\draw (6,0) node[align=center, below] {0.6};
				\draw (10,0) node[align=center, below] {1};
				\draw (0,10) node[align=center, left] {1};
				\draw (0,6) node[align=center, left] {0.6};
				\draw (0,4) node[align=center, left] {0.4};
				\draw (10,5.5) node[align=center, right] {0.55};
				\draw (10,4.4) node[align=center, right] {0.45};
				
				\filldraw[pattern=north east lines] (0,4.5) rectangle (4,5.5);
				\filldraw[pattern=north west lines] (6,4.5) rectangle (10,5.5);
				
				\filldraw[color=gray] (5,5) circle (1);
				\draw (5,5) node[align=center] {\Huge $\omega$};
				\draw (9.5,9.5) node[align=center] {\Huge $\Omega$};
				
				\draw[dashed] (0,4) -- (10,4);
				\draw[dashed] (0,6) -- (10,6);
				
				\draw[dashed] (4,0) -- (4,10);
				\draw[dashed] (6,0) -- (6,10);
				
				\foreach \x in {0.5, 2.5, 4.5, 6.5, 8.5} {
					\draw[->,thick] (\x,2.5) -- +(1,0);
					\draw[->,thick] (\x,7.5) -- ++(1,0);
				}
				\foreach \x in {0.5, 2.5, 6.5, 8.5} {
					\draw[->,thick] (\x,5) -- +(1,0);
				}
		\end{tikzpicture}}
		\caption{$\omega = B((0.5,0.5),0.1)$, measuring the error downstream and upstream.}
		\label{fig:data_sets_disk}
	\end{subfigure}
	\hfill
	\begin{subfigure}{0.32\textwidth}
		\resizebox{\textwidth}{!}{
			\begin{tikzpicture}
				\draw (0,0) rectangle (10,10);
				\draw (0,0) node[align=center, below] {0};
				\draw (2,0) node[align=center, below] {0.2};
				\draw (10,0) node[align=center, below] {1};
				\draw (0,10) node[align=center, left] {1};
				\draw (0,6) node[align=center, left] {0.6};
				\draw (0,4) node[align=center, left] {0.4};
				\draw (10,5.5) node[align=center, right] {0.55};
				\draw (10,4.4) node[align=center, right] {0.45};
				
				\filldraw[color=gray] (0,4) rectangle (2,6);
				\filldraw[pattern=north west lines] (2,4.5) rectangle (10,5.5);
				
				\draw (1,5) node[align=center] {\Huge $\omega$};
				\draw (9.5,9.5) node[align=center] {\Huge $\Omega$};
				
				\draw[dashed] (0,4) -- (10,4);
				\draw[dashed] (0,6) -- (10,6);
				
				\draw[dashed] (2,0) -- (2,10);
				
				\foreach \x in {0.5, 2.5, 4.5, 6.5, 8.5} {
					\draw[->,thick] (\x,2.5) -- +(1,0);
					\draw[->,thick] (\x,7.5) -- ++(1,0);
				}
				\foreach \x in {2.5, 4.5, 6.5, 8.5} {
					\draw[->,thick] (\x,5) -- +(1,0);
				}
		\end{tikzpicture}}
		\caption{$\omega = (0,0.2)\times(0.4,0.6)$, measuring the error downstream in $(0.2,1)\times (0.45,0.55)$.}
		\label{fig:data_sets_side_down}
	\end{subfigure}
	\begin{subfigure}{0.32\textwidth}
		\resizebox{\textwidth}{!}{
			\begin{tikzpicture}
				\draw (0,0) rectangle (10,10);
				\draw (0,0) node[align=center, below] {0};
				\draw (8,0) node[align=center, below] {0.8};
				\draw (10,0) node[align=center, below] {1};
				\draw (0,10) node[align=center, left] {1};
				\draw (0,5.5) node[align=center, left] {0.55};
				\draw (0,4.5) node[align=center, left] {0.45};
				\draw (10,6) node[align=center, right] {0.6};
				\draw (10,4) node[align=center, right] {0.4};
				
				\filldraw[color=gray] (8,4) rectangle (10,6);
				\filldraw[pattern=north east lines] (0,4.5) rectangle (8,5.5);
				
				\draw (9,5) node[align=center] {\Huge $\omega$};
				\draw (9.5,9.5) node[align=center] {\Huge $\Omega$};
				
				\draw[dashed] (0,4) -- (10,4);
				\draw[dashed] (0,6) -- (10,6);
				
				\draw[dashed] (8,0) -- (8,10);
				
				\foreach \x in {0.5, 2.5, 4.5, 6.5, 8.5} {
					\draw[->,thick] (\x,2.5) -- +(1,0);
					\draw[->,thick] (\x,7.5) -- ++(1,0);
				}
				\foreach \x in {0.5, 2.5, 4.5, 6.5} {
					\draw[->,thick] (\x,5) -- +(1,0);
				}
		\end{tikzpicture}}
		\caption{$\omega = (0.8,1)\times(0.4,0.6)$, measuring the error upstream in $(0,0.8)\times (0.45,0.55)$.}
		\label{fig:data_sets_side_up}
	\end{subfigure}
	\caption{Data set $\omega$ and error measurement regions (hatched).}
	\label{fig:data_sets}
\end{figure}
We let $\Omega$ be the unit square and illustrate the performance of the numerical method \eqref{eq:method_FEM} for different locations of the data domain $\omega$ and different regions of interest where we measure the approximation error. The computational domains are given in \cref{fig:data_sets} and the implementation is done using FEniCS \cite{fenics}. In all the examples below we have used uniform triangulations with alternating left/right diagonals.
In the definition of $s_\Omega$ and $s_*$ we have taken the stabilization parameters $\gamma = 10^{-5}$ and $\gamma_* = 1$, and $\zeta = 2$ for $s_\omega$.
The effect of different combinations of $\gamma$ and $\gamma_*$ on the $L^2$-errors is shown in \cref{fig:centered_disc_params} and \cref{fig:centered_disc_params_global} when data is given in a centered disk. Similar results are obtained when the data set is near the inflow/outflow boundary. Notice that our choice is empirically close to being optimal both locally and globally.

\begin{figure}[h]
	\begin{subfigure}{0.48\textwidth}
		\includegraphics[draft=false, width=\textwidth]{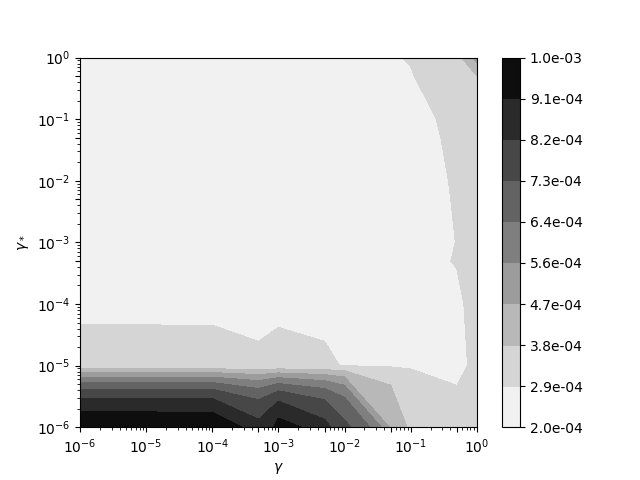}
		\caption{$\mu = 10^{-3}$.}
	\end{subfigure}
	\hfill
	\begin{subfigure}{0.48\textwidth}
		\includegraphics[draft=false, width=\textwidth]{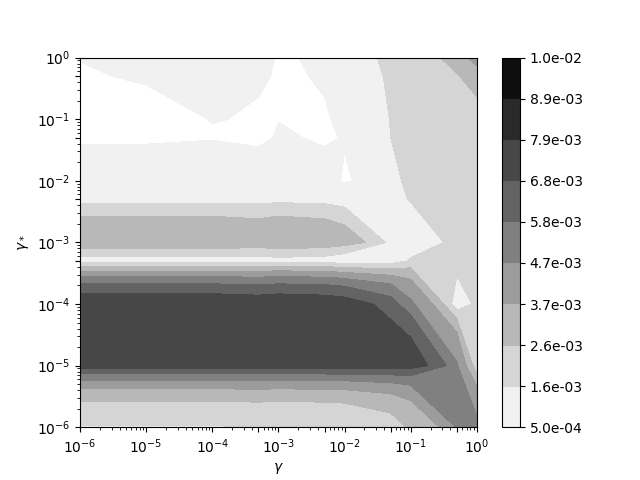}
		\caption{$\mu = 10^{-2}$.}
	\end{subfigure}
	\caption{Varying the stabilization parameters $\gamma$ and $\gamma_*$. Absolute $L^2$-errors downstream, computational domains in \cref{fig:data_sets_disk}. $\beta = (1,0)$, exact solution $u = 2\sin(5\pi x)\sin (5\pi y)$. Similar results in the upstream case.}
	\label{fig:centered_disc_params}
\end{figure}

\begin{figure}[h]
	\begin{subfigure}{0.48\textwidth}
		\includegraphics[draft=false, width=\textwidth]{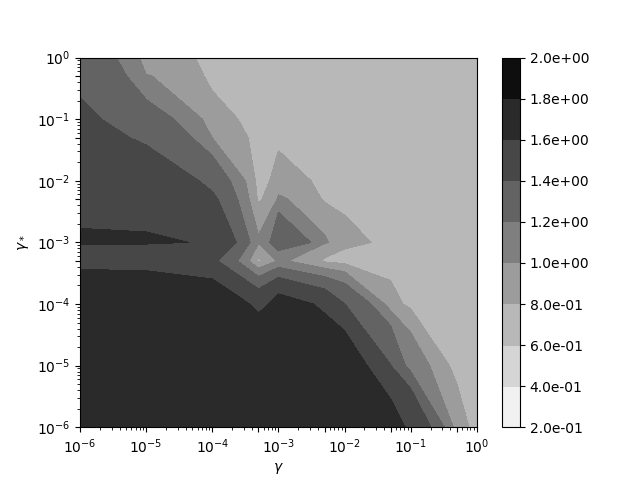}
		\caption{$\mu = 10^{-3}$.}
	\end{subfigure}
	\hfill
	\begin{subfigure}{0.48\textwidth}
		\includegraphics[draft=false, width=\textwidth]{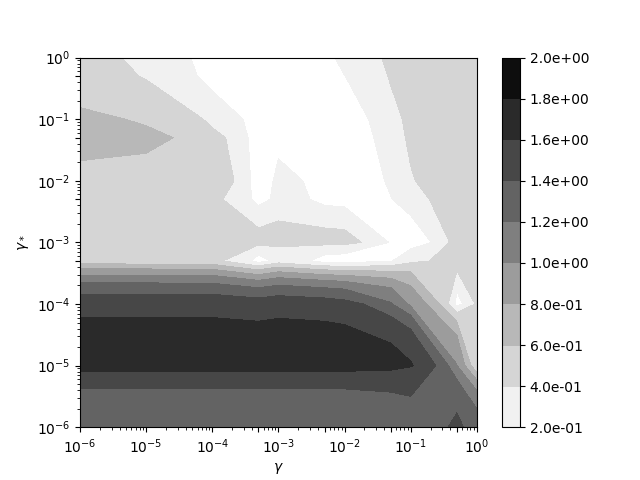}
		\caption{$\mu = 10^{-2}$.}
	\end{subfigure}
	\caption{Varying the stabilization parameters $\gamma$ and $\gamma_*$. Absolute $L^2$-errors globally, computational domains in \cref{fig:data_sets_disk}. $\beta = (1,0)$, exact solution $u = 2\sin(5\pi x)\sin (5\pi y)$.}
	\label{fig:centered_disc_params_global}
\end{figure}

We first show convergence plots both downstream and upstream from the data set when varying the diffusion coefficient $\mu$ and keeping the convection field $\beta$ fixed. As in the case of well-posed convection-dominated problems, the observed $L^2$-convergence order is typically $\mathcal{O}(h^2)$, surpassing by $\mathcal{O}(h^\frac12)$ the weighted error estimates proven for general meshes (see \cref{fig:sides_L2}, for example).
\begin{figure}[h]
	\begin{subfigure}{0.48\textwidth}
		\includegraphics[draft=false, width=\textwidth]{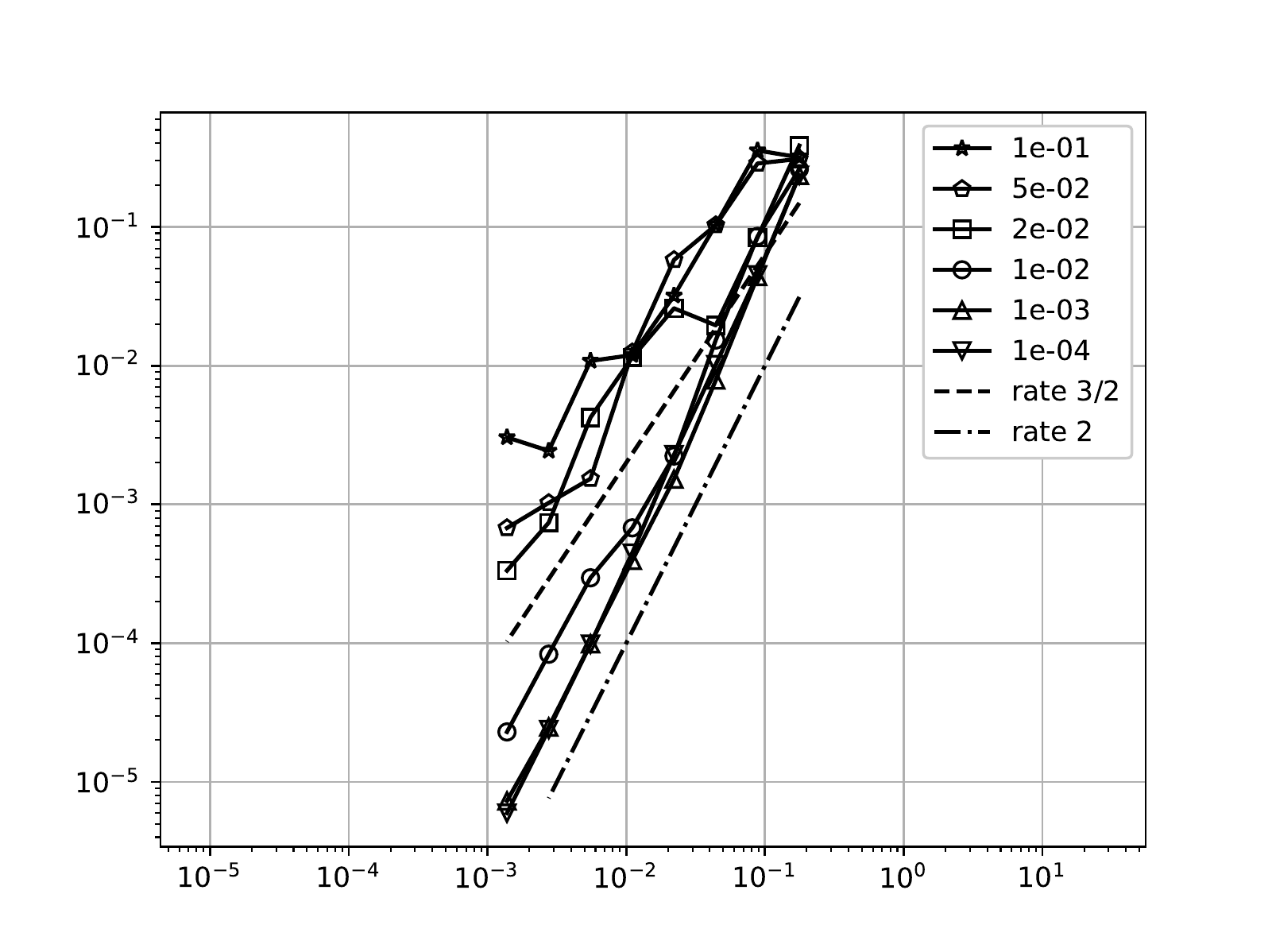}
		\caption{Computational domains in \cref{fig:data_sets_side_down}.}
		\label{fig:leftside_downstream}
	\end{subfigure}
	\hfill
	\begin{subfigure}{0.48\textwidth}
		\includegraphics[draft=false, width=\textwidth]{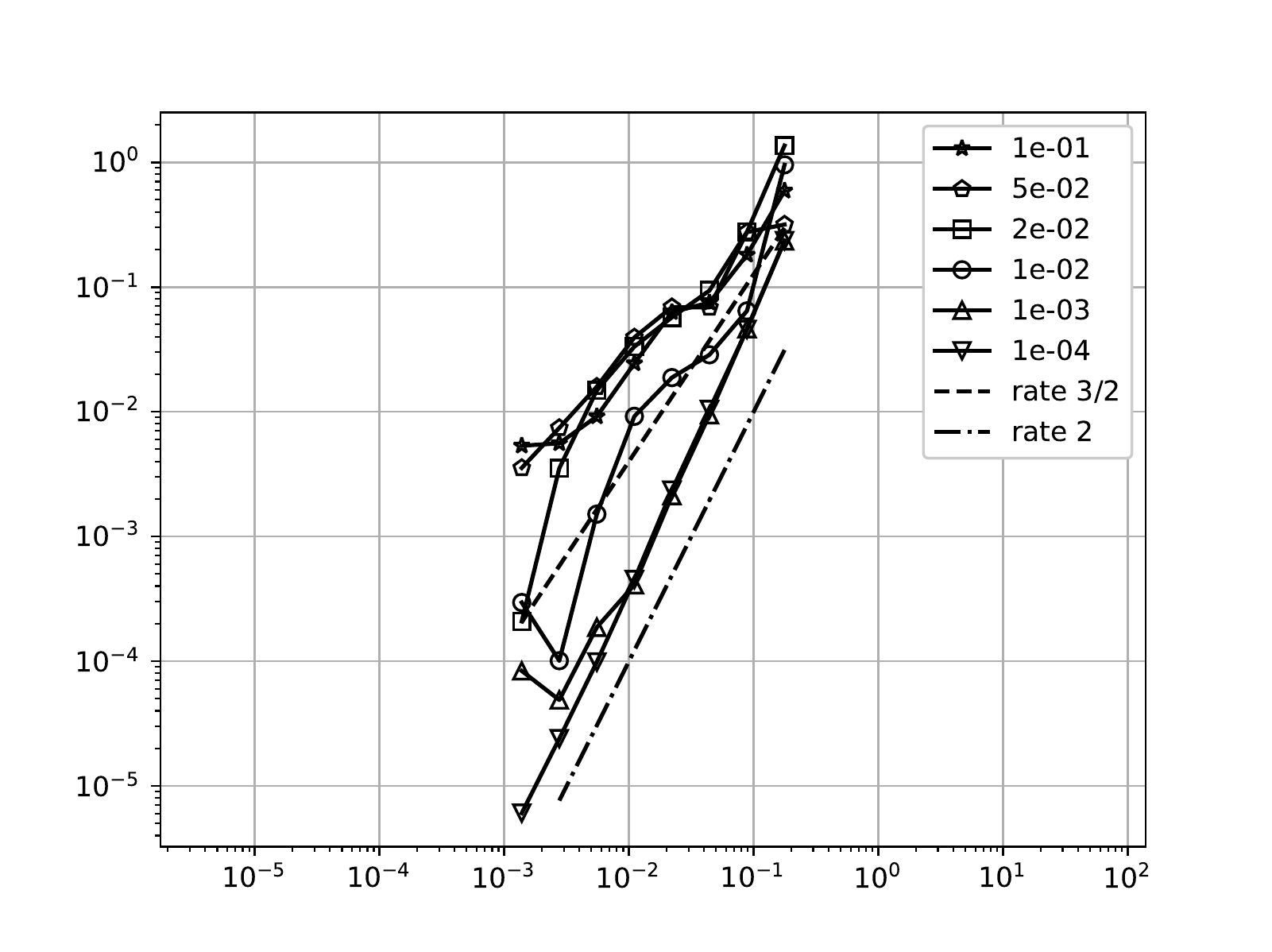}
		\caption{Computational domains in \cref{fig:data_sets_side_up}.}
		\label{fig:rightside_upstream}
	\end{subfigure}
	\caption{Absolute $L^2$-errors against mesh size $h$ when varying the diffusion coefficient $\mu$ for fixed $\beta = (1,0)$, exact solution $u = 2\sin(5\pi x)\sin (5\pi y)$.}
	\label{fig:sides_L2}
\end{figure}
\begin{figure}[h]
	\begin{subfigure}{0.48\textwidth}
		\includegraphics[draft=false, width=\textwidth]{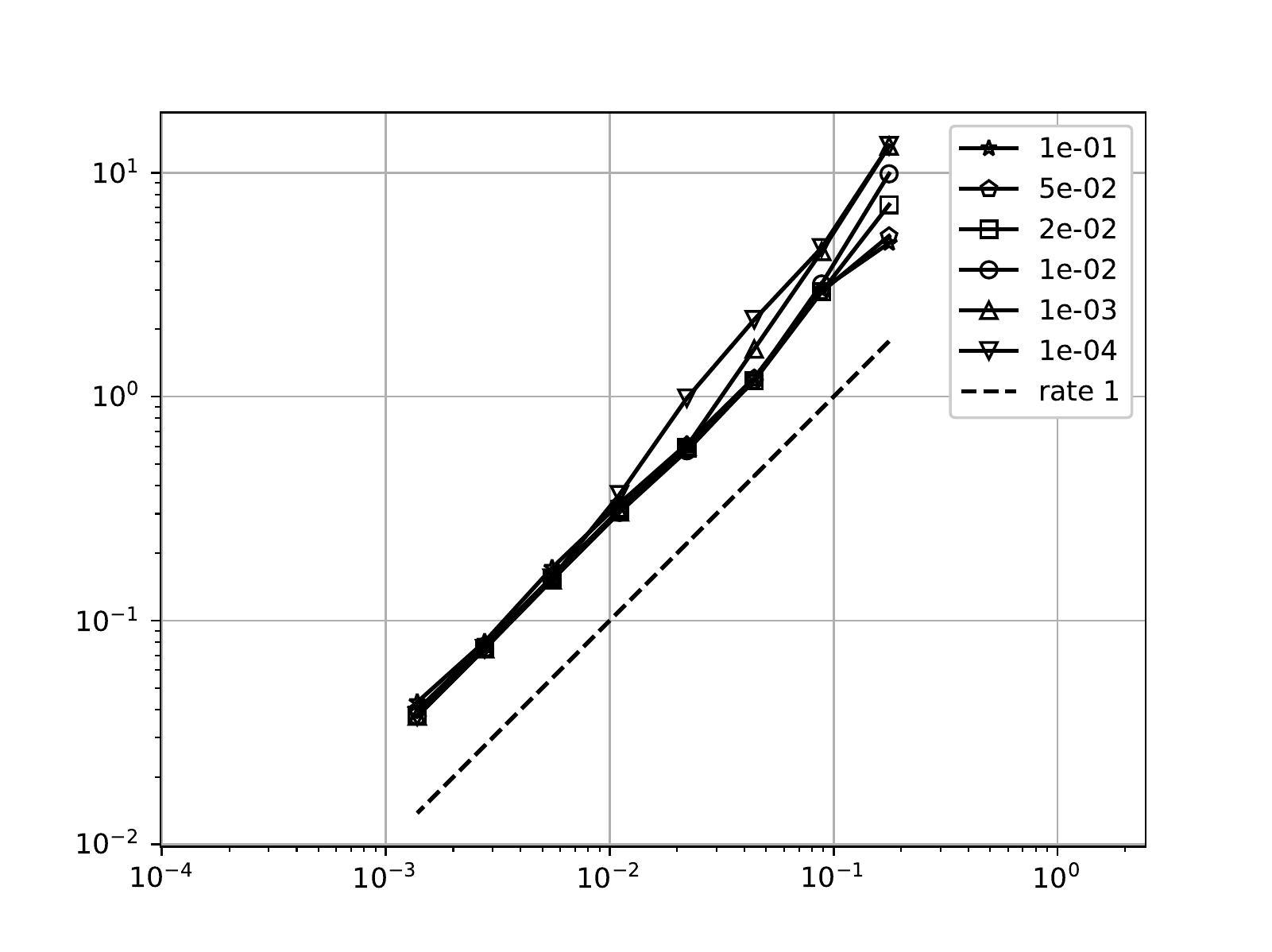}
		\caption{Computational domains in \cref{fig:data_sets_side_down}.}
		\label{fig:leftside_downstream}
	\end{subfigure}
	\hfill
	\begin{subfigure}{0.48\textwidth}
		\includegraphics[draft=false, width=\textwidth]{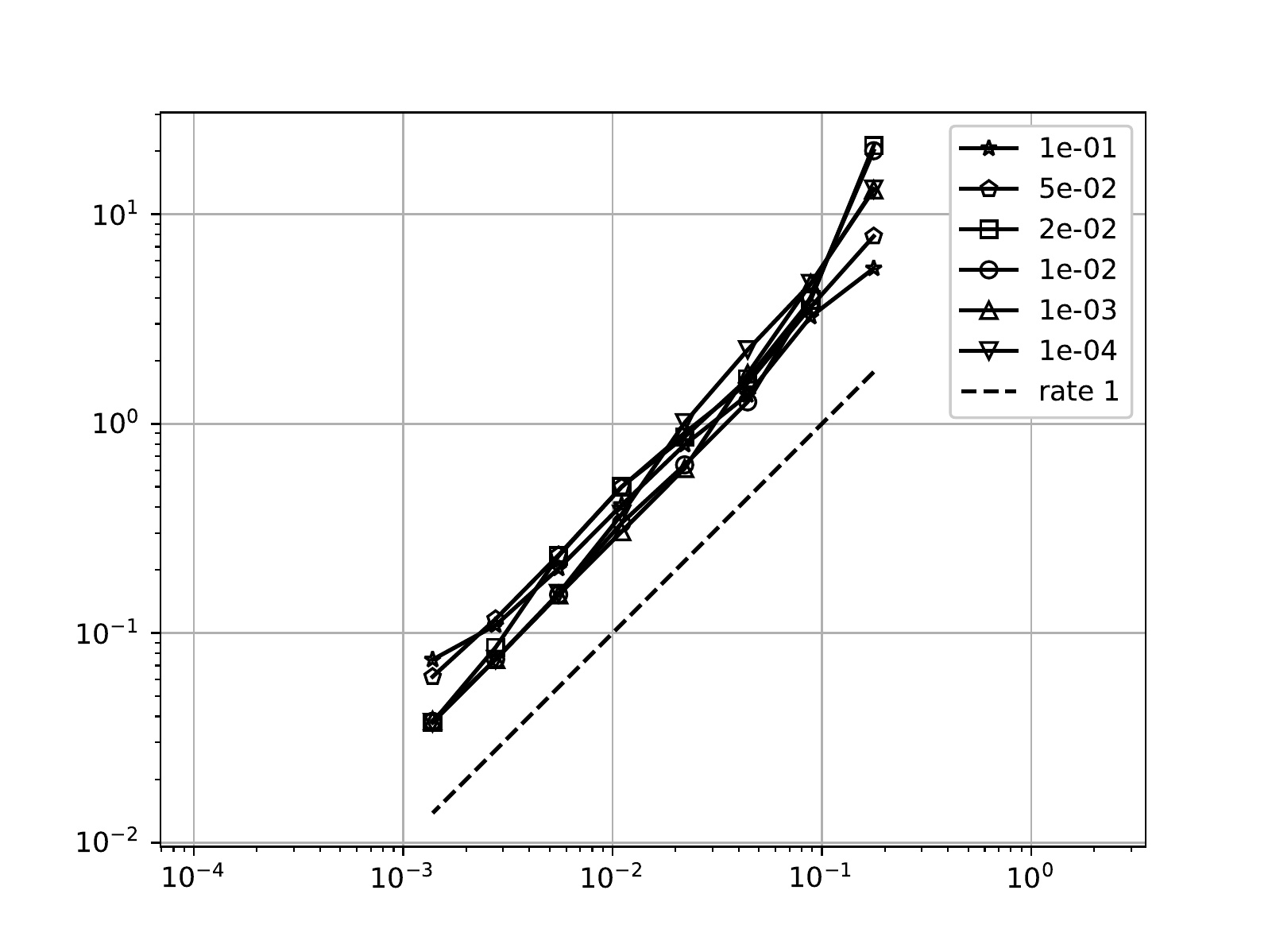}
		\caption{Computational domains in \cref{fig:data_sets_side_up}.}
		\label{fig:rightside_upstream}
	\end{subfigure}
	\caption{Absolute $H^1$-errors against mesh size $h$ when varying the diffusion coefficient $\mu$ for fixed $\beta = (1,0)$, exact solution $u = 2\sin(5\pi x)\sin (5\pi y)$.}
	\label{fig:sides_H1}
\end{figure}

\subsection{Data set near the inflow/outflow boundary} We consider the data set $\omega$ near the inflow and outflow boundaries of $\Omega$, as assumed in \cref{section_stab_weight}.
We observe in \cref{fig:sides_L2} that as diffusion is reduced the convergence order for the $L^2$-errors increases, culminating with quadratic convergence when convection dominates. Confirming the theoretical analysis in \cref{sec:error_estimates_upstream}, we note the presence of an intermediate regime for P\'eclet numbers in which the upstream convergence orders are reduced and the upstream errors are typically larger. This can also be seen in \cref{fig:down_vs_up} where we consider the diffusion coefficient $\mu = 10^{-2}$ and an interior data set. The errors in the $H^1$-seminorm are given in \cref{fig:sides_H1} which shows almost linear convergence. This is probably due to the small contribution of the gradient term in the triple norm \eqref{eq:triple_norm_down}.
\begin{figure}[h]
	\begin{subfigure}{0.48\textwidth}
		\includegraphics[draft=false, width=\textwidth]{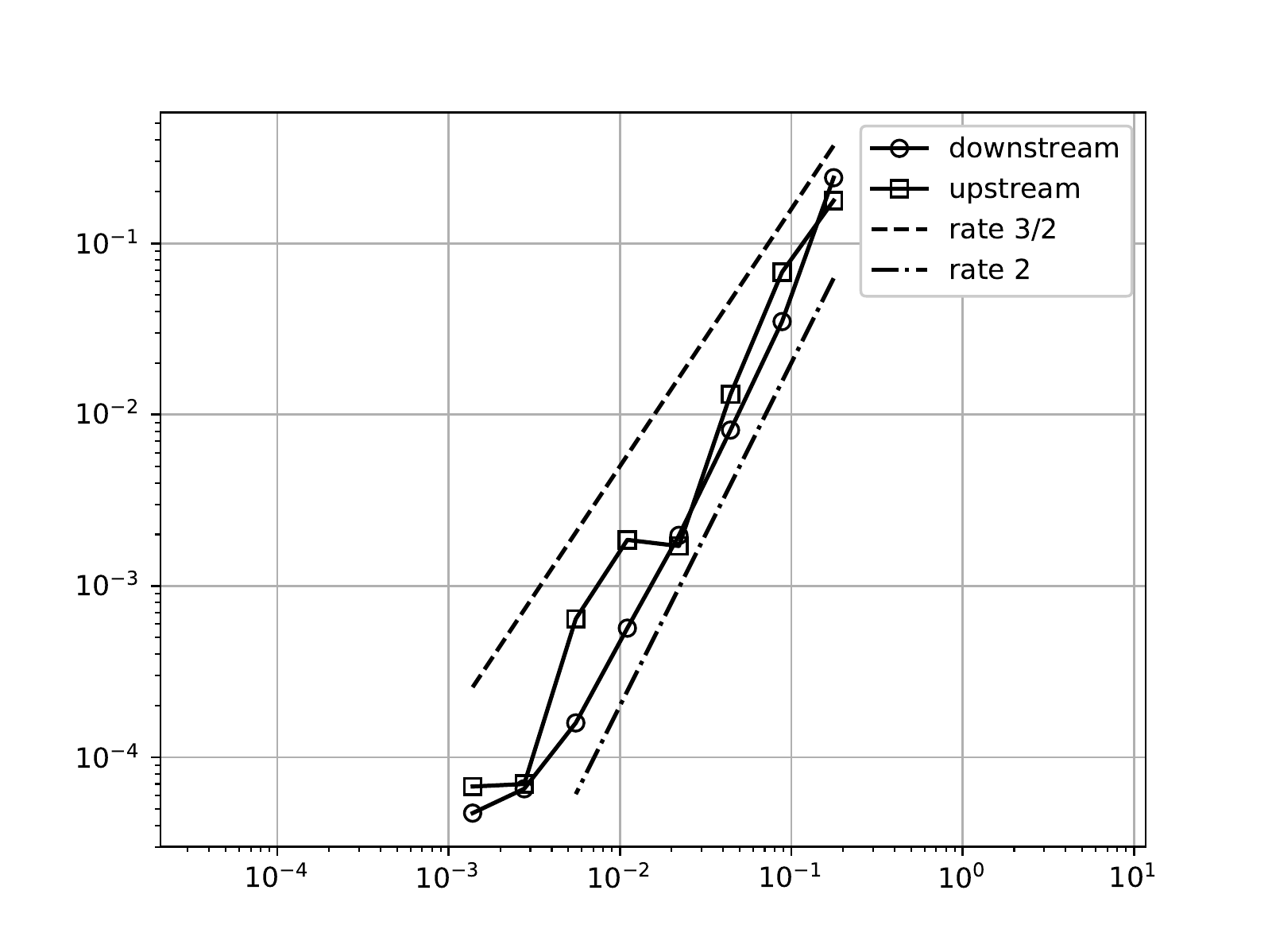}
		\caption{Domains in \cref{fig:data_sets_disk}.}
	\end{subfigure}
	\hfill
	\begin{subfigure}{0.48\textwidth}
		\includegraphics[draft=false, width=\textwidth]{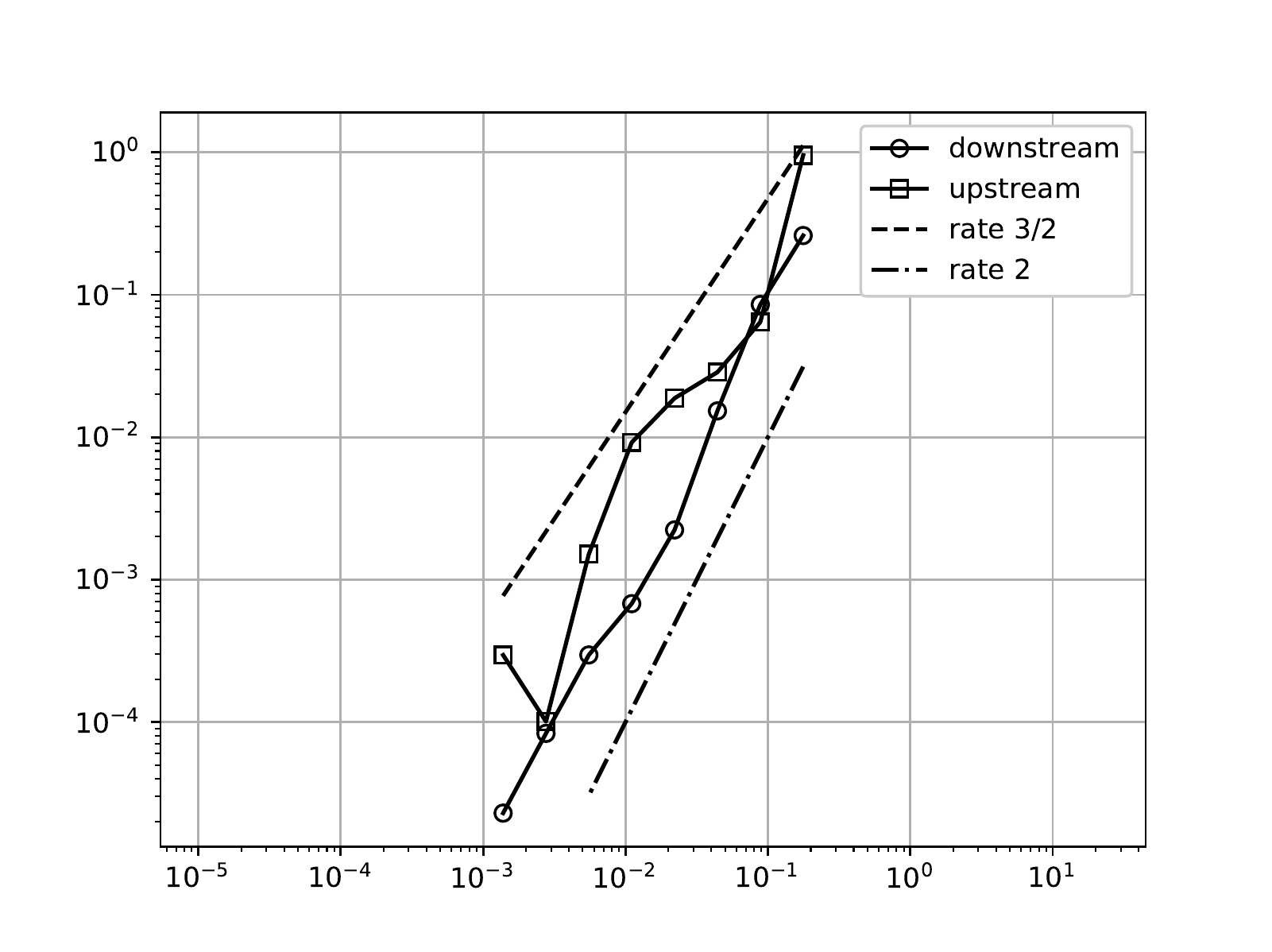}
		\caption{Domains in \cref{fig:data_sets_side_down} and \cref{fig:data_sets_side_up}.}
	\end{subfigure}
	\caption{Absolute $L^2$-errors against mesh size $h$, downstream vs upstream for $\mu = 10^{-2}$, $\beta = (1,0)$, exact solution $u = 2\sin(5\pi x)\sin (5\pi y)$.}
	\label{fig:down_vs_up}
\end{figure}

\begin{figure}[h]
	\begin{subfigure}{0.48\textwidth}
		\includegraphics[draft=false, width=\textwidth]{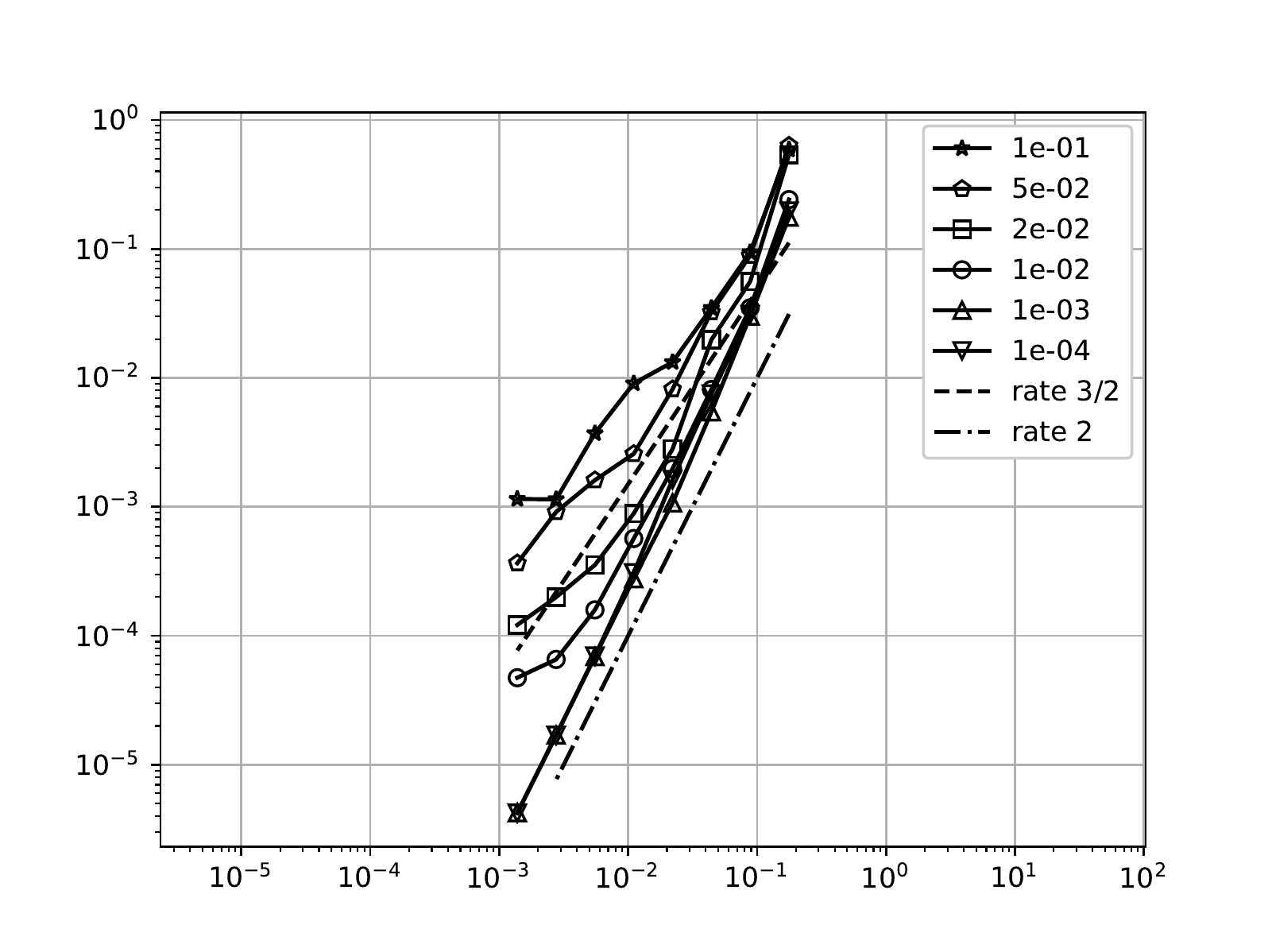}
		\caption{Downstream.}
	\end{subfigure}
	\hfill
	\begin{subfigure}{0.48\textwidth}
		\includegraphics[draft=false, width=\textwidth]{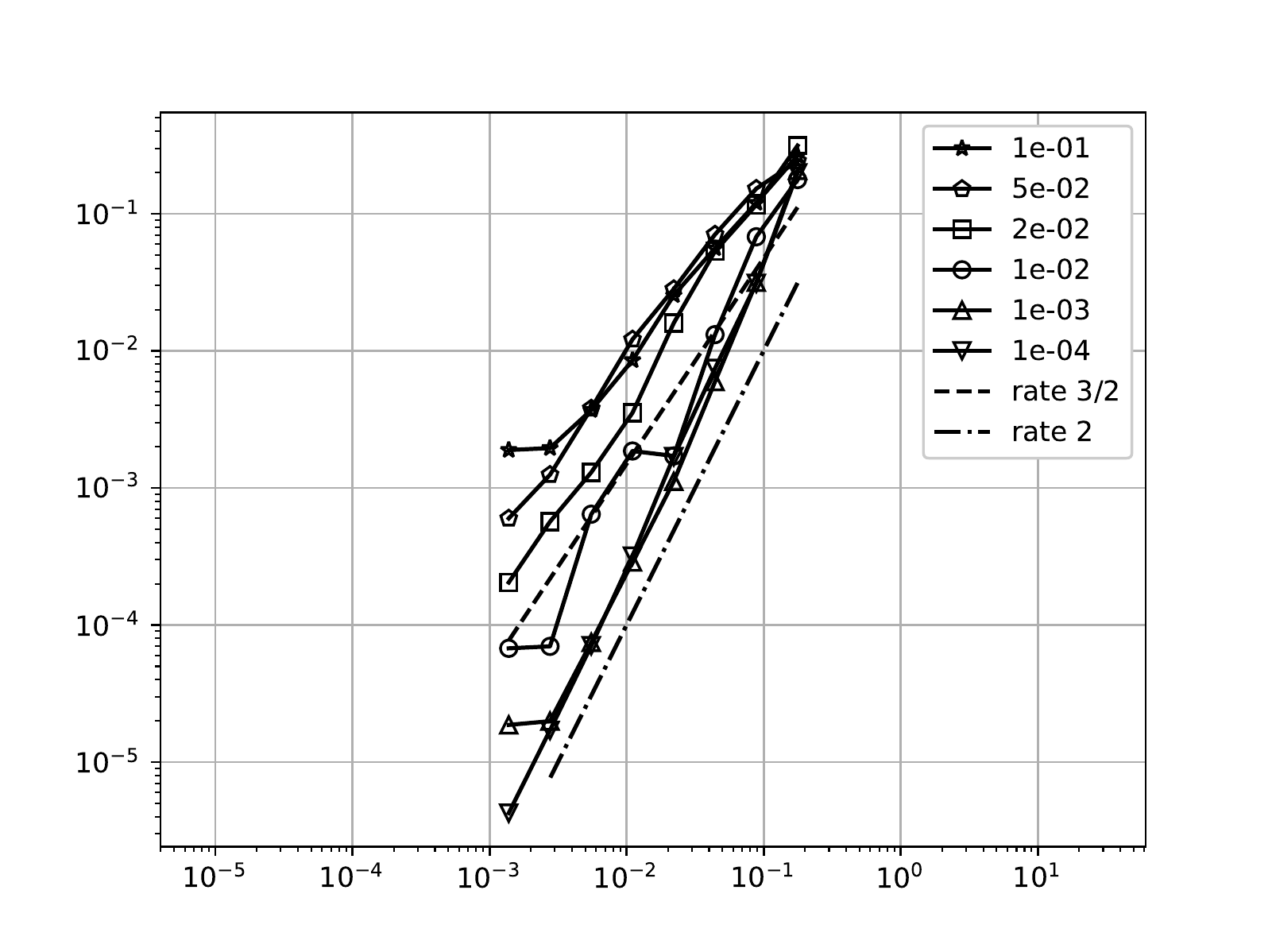}
		\caption{Upstream.}
	\end{subfigure}
	\caption{Absolute $L^2$-errors against mesh size $h$, computational domains in \cref{fig:data_sets_disk}. Varying the diffusion coefficient $\mu$ for fixed $\beta = (1,0)$, exact solution $u = 2\sin(5\pi x)\sin (5\pi y)$.}
	\label{fig:centered_disc}
\end{figure}
\subsection{Interior data set} Next we consider the setting of the example discussed in the \nameref{section:intro} (\cref{fig:contour_convection}), where data is given in the centre of the domain. We give the convergence of the $L^2$-errors in \cref{fig:centered_disc} with the caveat that this location of the data set $\omega$ is not rigorously covered by the theoretical analysis of the previous sections. Nonetheless, the experiments are in agreement with the proven results. Notice that the $L^2$-convergence is faster as $\mu$ decreases and for high P\'eclet numbers (above 10) one has optimal quadratic convergence both downstream and upstream, with the distinction that in the upstream case the convergence order is reduced in an intermediate regime, in agreement with the theoretical results in \cref{sec:error_estimates}. Also, as expected from the error estimates proven in the first part \cite{BNO20}, when diffusion is moderately small one can see the transition towards the diffusion-dominated regime as the mesh gets refined -- the convergence changes from almost quadratic to sublinear as the P\'eclet number decreases below 1.
\cref{fig:centered_disc_H1} shows almost linear convergence in the $H^1$-seminorm. We think this is observed due to the small contribution of the gradient term in the triple norm \eqref{eq:triple_norm_down}.
We also remark almost no distinction between upstream and downstream for this example, probably because the gradient term is controlled by the $L^2$-norm for small enough $\mu$.
\begin{figure}[h]
	\begin{subfigure}{0.48\textwidth}
		\includegraphics[draft=false, width=\textwidth]{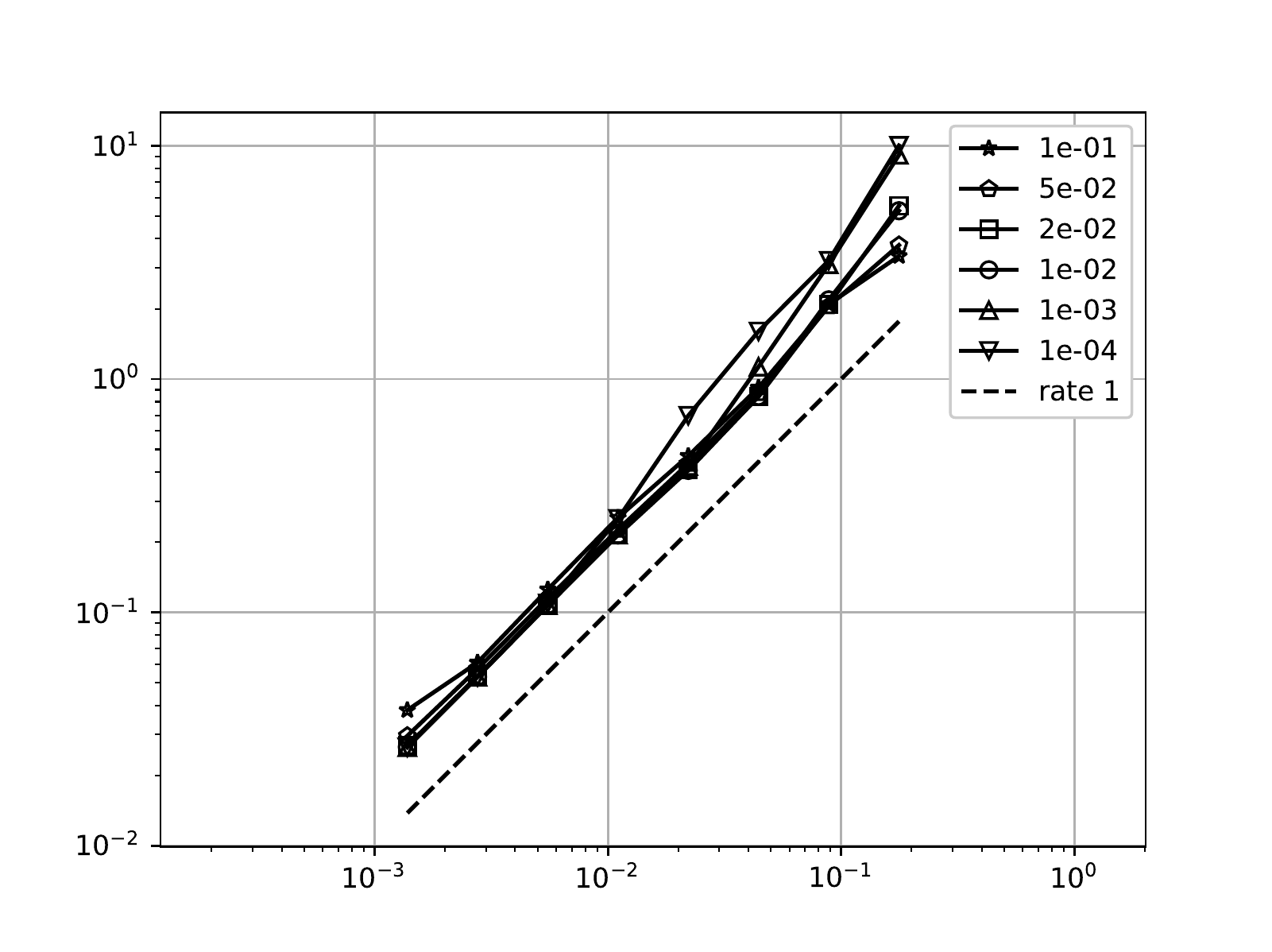}
		\caption{Downstream.}
	\end{subfigure}
	\hfill
	\begin{subfigure}{0.48\textwidth}
		\includegraphics[draft=false, width=\textwidth]{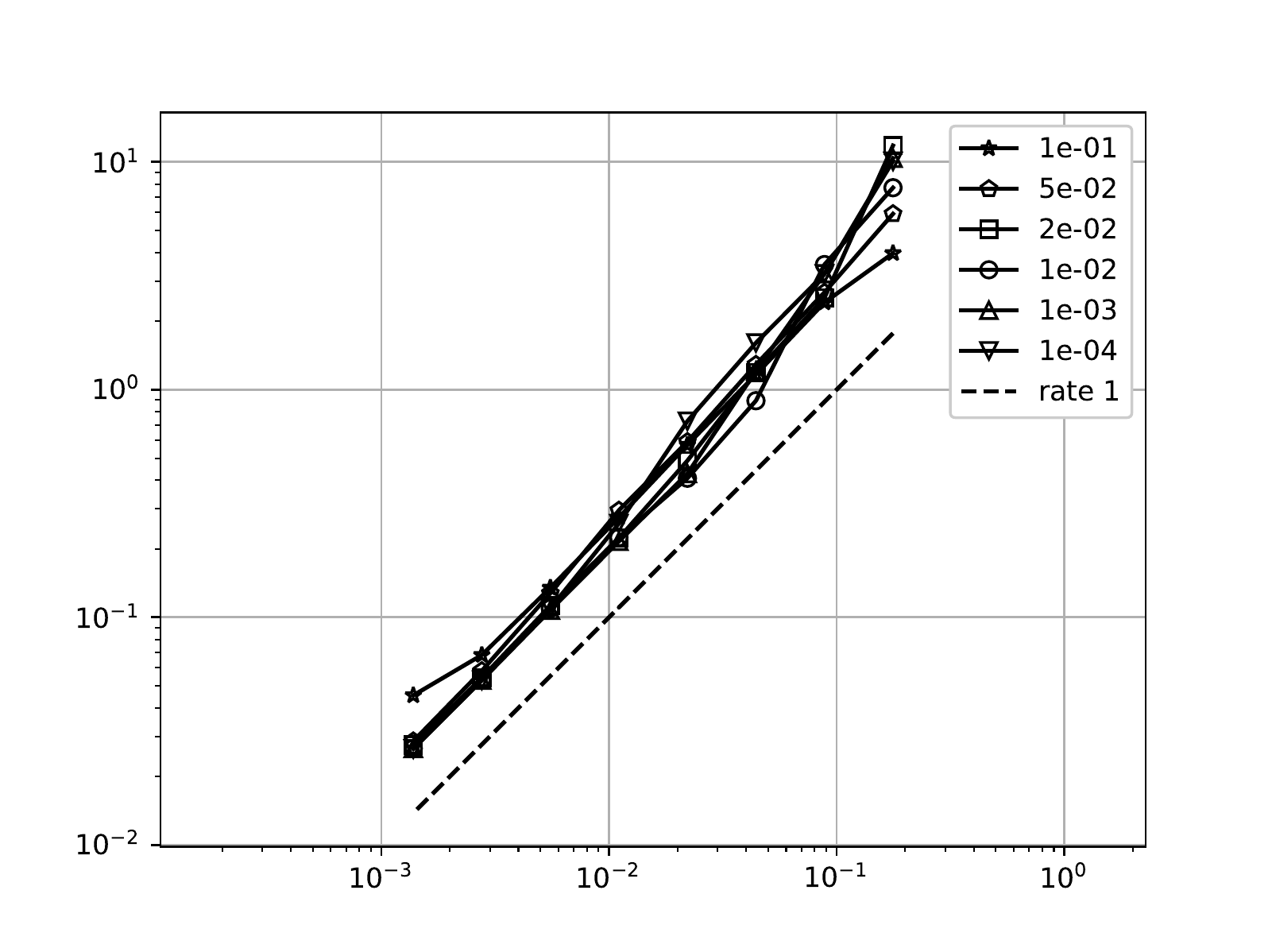}
		\caption{Upstream.}
	\end{subfigure}
	\caption{$H^1$-errors against mesh size $h$, computational domains in \cref{fig:data_sets_disk}. Varying the diffusion coefficient $\mu$ for fixed $\beta = (1,0)$, exact solution $u = 2\sin(5\pi x)\sin (5\pi y)$.}
	\label{fig:centered_disc_H1}
\end{figure}
\begin{figure}[H]
	\begin{subfigure}{0.48\textwidth}
		\includegraphics[draft=false, width=\textwidth]{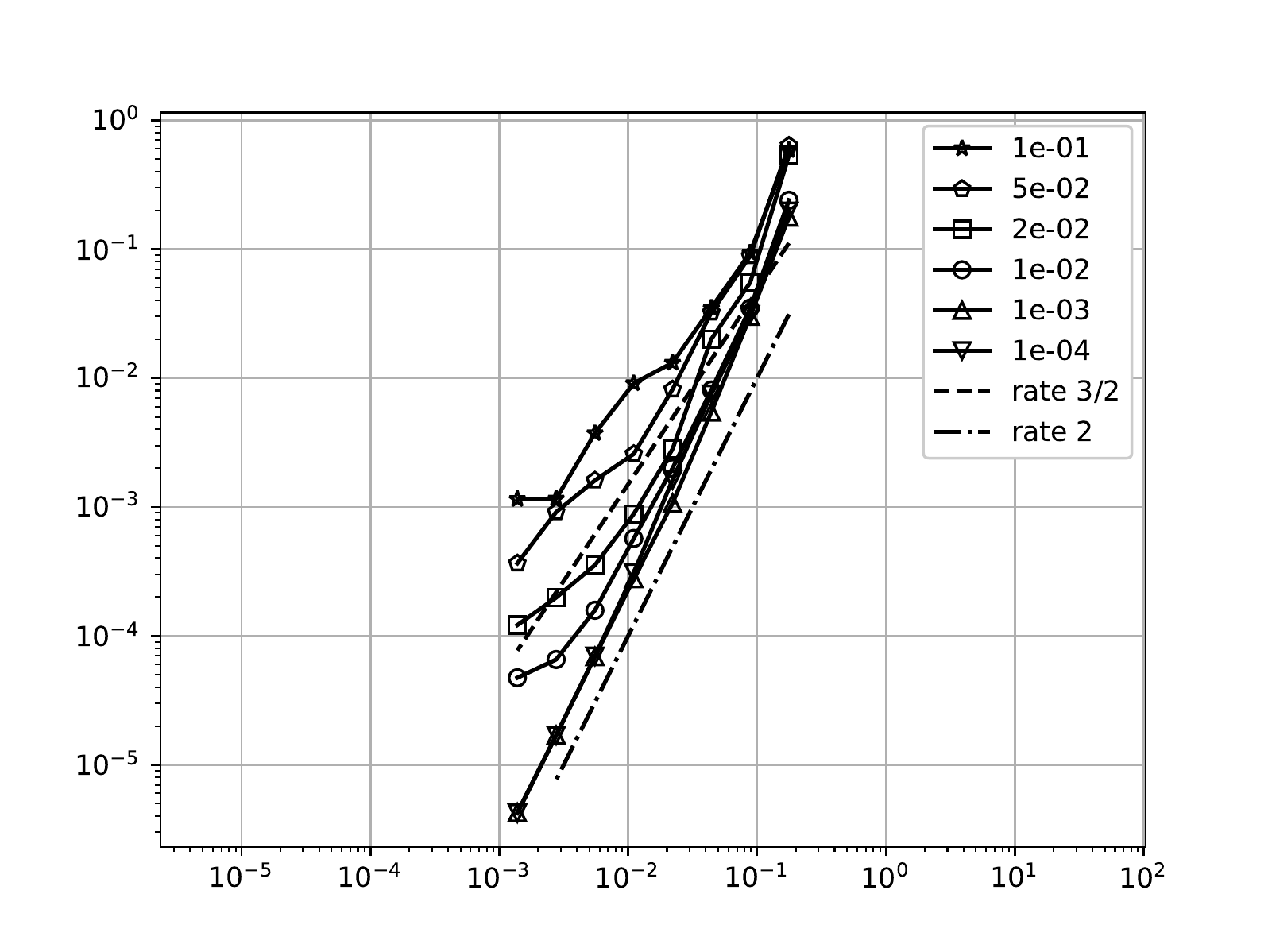}
		\caption{Downstream, perturbation $\mathcal{O}(h^2)$.}
	\end{subfigure}
	\hfill
	\begin{subfigure}{0.48\textwidth}
		\includegraphics[draft=false, width=\textwidth]{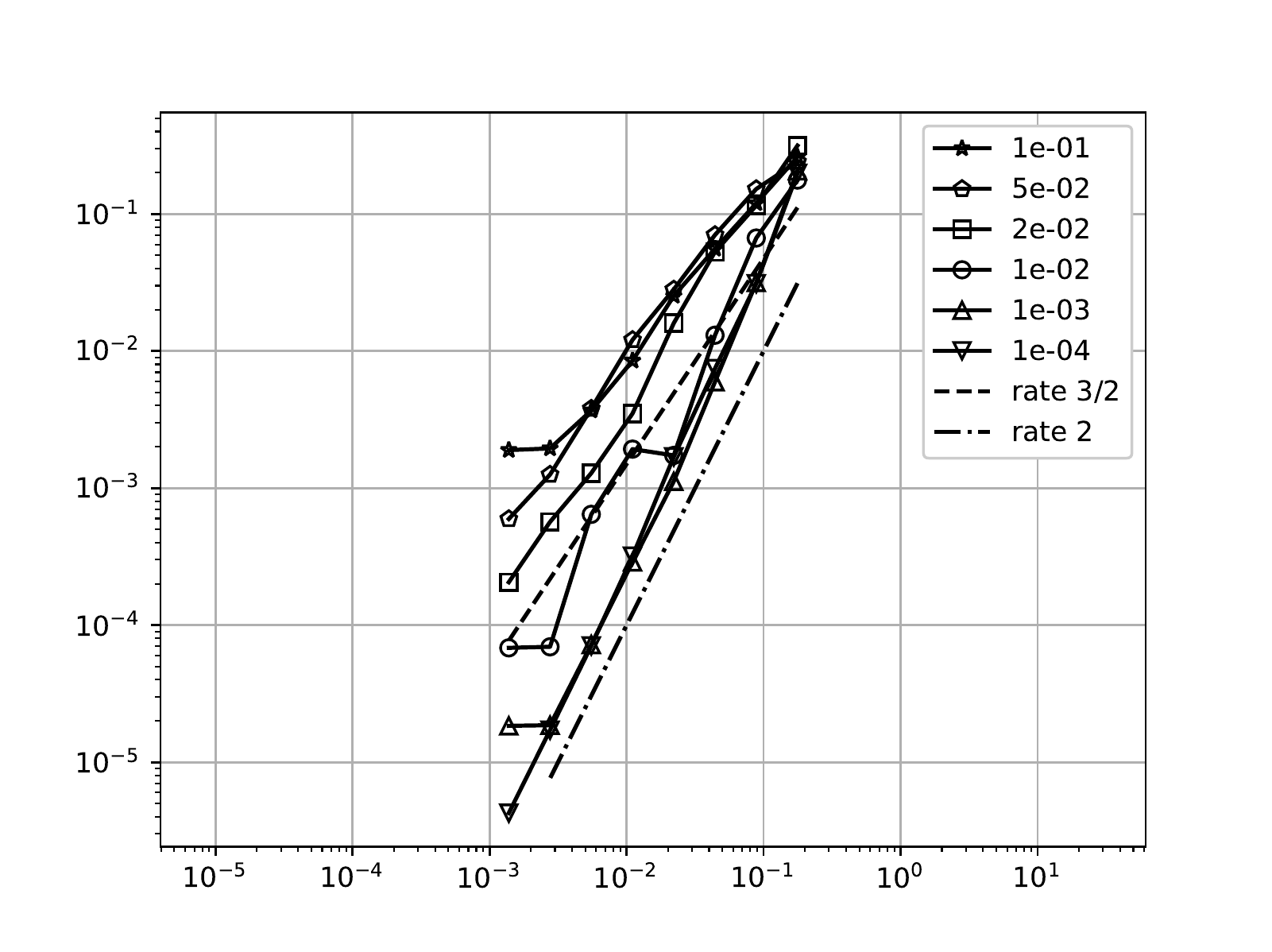}
		\caption{Upstream, perturbation $\mathcal{O}(h^2)$.}
	\end{subfigure}
	\caption{$L^2$-errors against mesh size $h$ for perturbations in data, computational domains in \cref{fig:data_sets_disk}. Varying the diffusion coefficient $\mu$ for fixed $\beta = (1,0)$, exact solution $u = 2\sin(5\pi x)\sin (5\pi y)$.}
	\label{fig:centered_disc_pert_h2}
\end{figure}
\subsection{Data perturbations}
We demonstrate the effect of data perturbations $\tilde U_\omega = u\vert_{\omega} + \delta$ in a downstream vs upstream setting by polluting the restriction of $u$ to each node of the mesh in $\omega$ with uniformly distributed values in $[-h^2,h^2], [-h, h]$ and $[-h^\frac12, h^\frac12]$, respectively.
Comparing first \cref{fig:centered_disc_pert_h2} to \cref{fig:centered_disc} we see that perturbations of amplitude $\mathcal{O}(h^2)$ have no effect on the $L^2$-errors, as expected.

An $\mathcal{O}(h)$ noise amplitude exhibits in \cref{fig:centered_disc_pert_h} the difference -- proven in \cref{thm:error_weighted_downstream} and \cref{thm:error_weighted_upstream} -- between the downstream and upstream scenarios. In the upstream case the noise has a strong effect for moderate P\'eclet numbers and the errors stagnate. Only for high P\'eclet numbers one has convergence of order $\mathcal{O}(h^\frac12)$. In the downstream case one observes lower errors, faster convergence and almost no noise effect for high P\'eclet numbers.
The difference is also very clear for perturbations of amplitude $\mathcal{O}(h^\frac12)$ shown in \cref{fig:centered_disc_pert_h12}. In the upstream case the errors stagnate and there seems to be no convergence, while in the downstream case the errors still convergence for high P\'eclet numbers.    

\begin{figure}[h]
	\begin{subfigure}{0.48\textwidth}
		\includegraphics[draft=false, width=\textwidth]{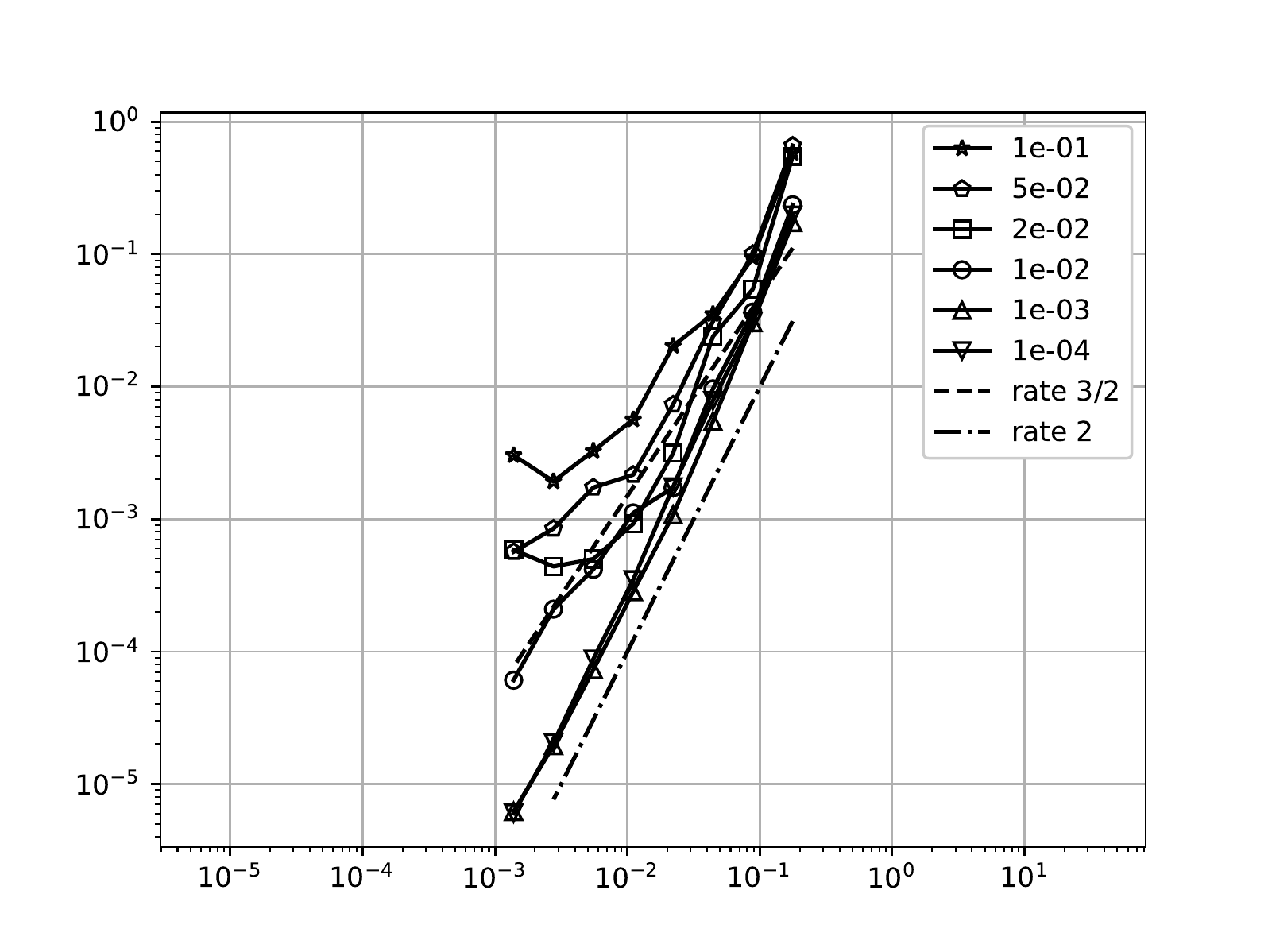}
		\caption{Downstream, perturbation $\mathcal{O}(h)$.}
	\end{subfigure}
	\hfill
	\begin{subfigure}{0.48\textwidth}
		\includegraphics[draft=false, width=\textwidth]{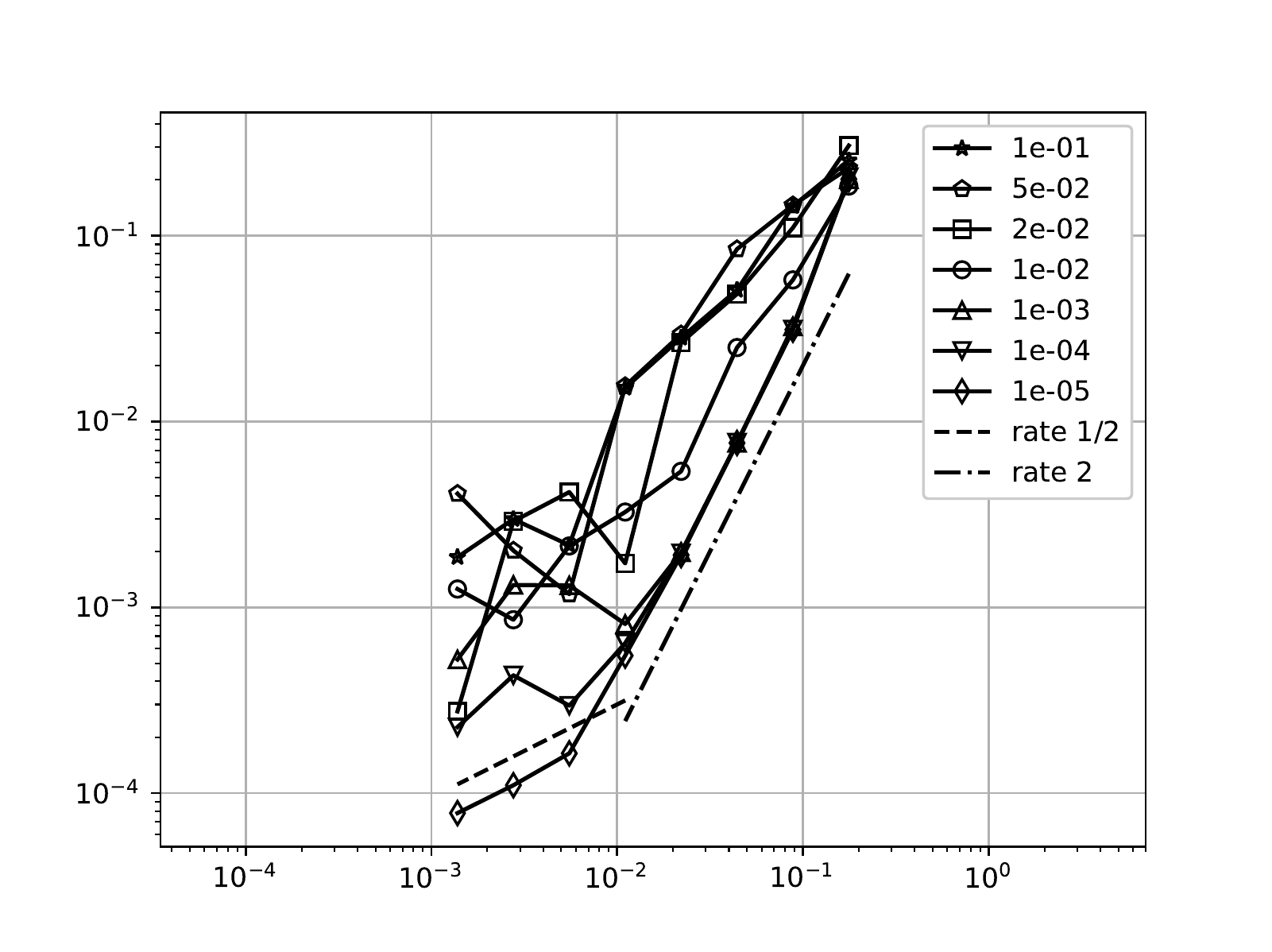}
		\caption{Upstream, perturbation $\mathcal{O}(h)$.}
	\end{subfigure}
	\caption{$L^2$-errors against mesh size $h$ for perturbations in data, computational domains in \cref{fig:data_sets_disk}. Varying the diffusion coefficient $\mu$ for fixed $\beta = (1,0)$, exact solution $u = 2\sin(5\pi x)\sin (5\pi y)$.}
	\label{fig:centered_disc_pert_h}
\end{figure}

\begin{figure}[h]
	\begin{subfigure}{0.48\textwidth}
		\includegraphics[draft=false, width=\textwidth]{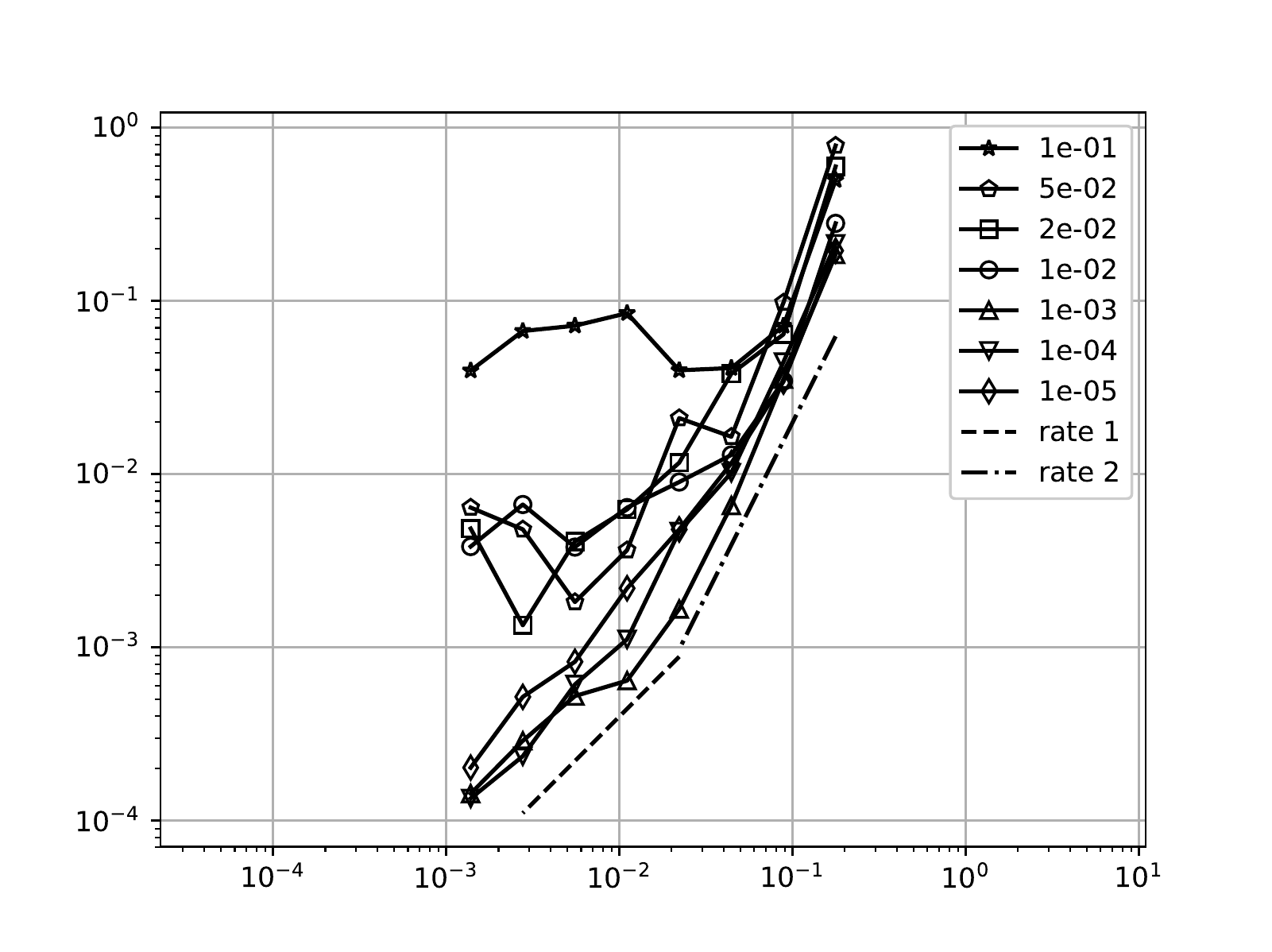}
		\caption{Downstream, perturbation $\mathcal{O}(h^\frac12)$.}
	\end{subfigure}
	\hfill
	\begin{subfigure}{0.48\textwidth}
		\includegraphics[draft=false, width=\textwidth]{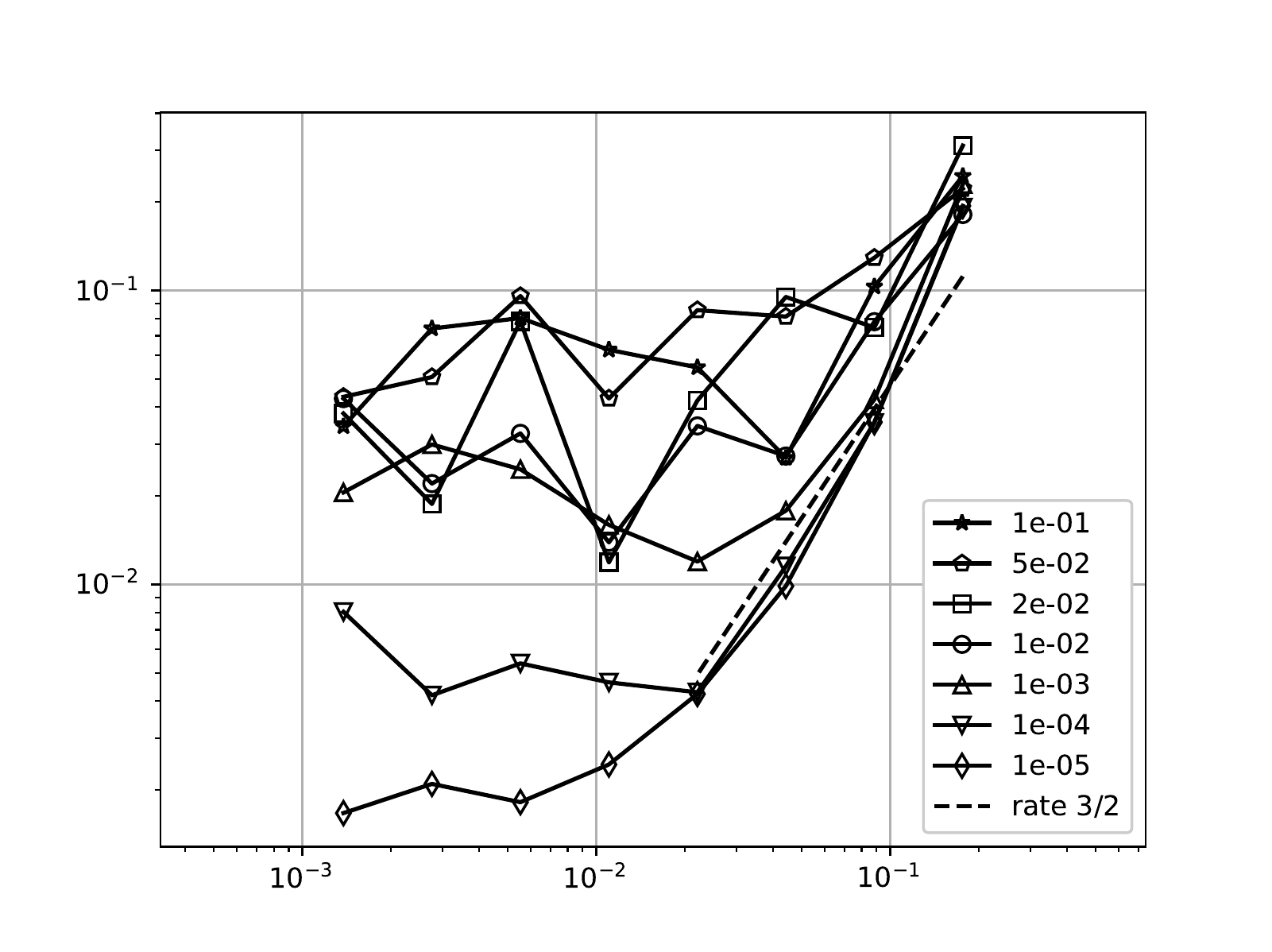}
		\caption{Upstream, perturbation $\mathcal{O}(h^\frac12)$.}
	\end{subfigure}
	\caption{$L^2$-errors against mesh size $h$ for perturbations in data, computational domains in \cref{fig:data_sets_disk}. Varying the diffusion coefficient $\mu$ for fixed $\beta = (1,0)$, exact solution $u = 2\sin(5\pi x)\sin (5\pi y)$.}
	\label{fig:centered_disc_pert_h12}
\end{figure}

\subsection{Internal layer example} We now consider an exact solution $u=\sin(3\pi x)+ \tanh(100(y-1/2))$ having an internal layer at $y=1/2$ and study qualitatively the transition from dominant diffusion to dominant convection. Data is given on both sides of the layer. The distribution of the absolute error is presented in \cref{fig:layer_diffusion} for the diffusion-dominated regime and in \cref{fig:layer_convection} for the intermediate and convection-dominated regimes.
Note that the width of the internal layer does not depend on the physical parameters.
Initially, the errors oscillate away from the data sets and concentrate around the boundary of the domain.
When convection dominates, the approximation around the layer strongly deteriorates due to the crosswind position relative to the data sets.
In this example the mesh is unstructured with 512 elements on a side and $h \approx 0.0025$. 

\begin{figure}[h]
	\includegraphics[width=0.75\columnwidth]{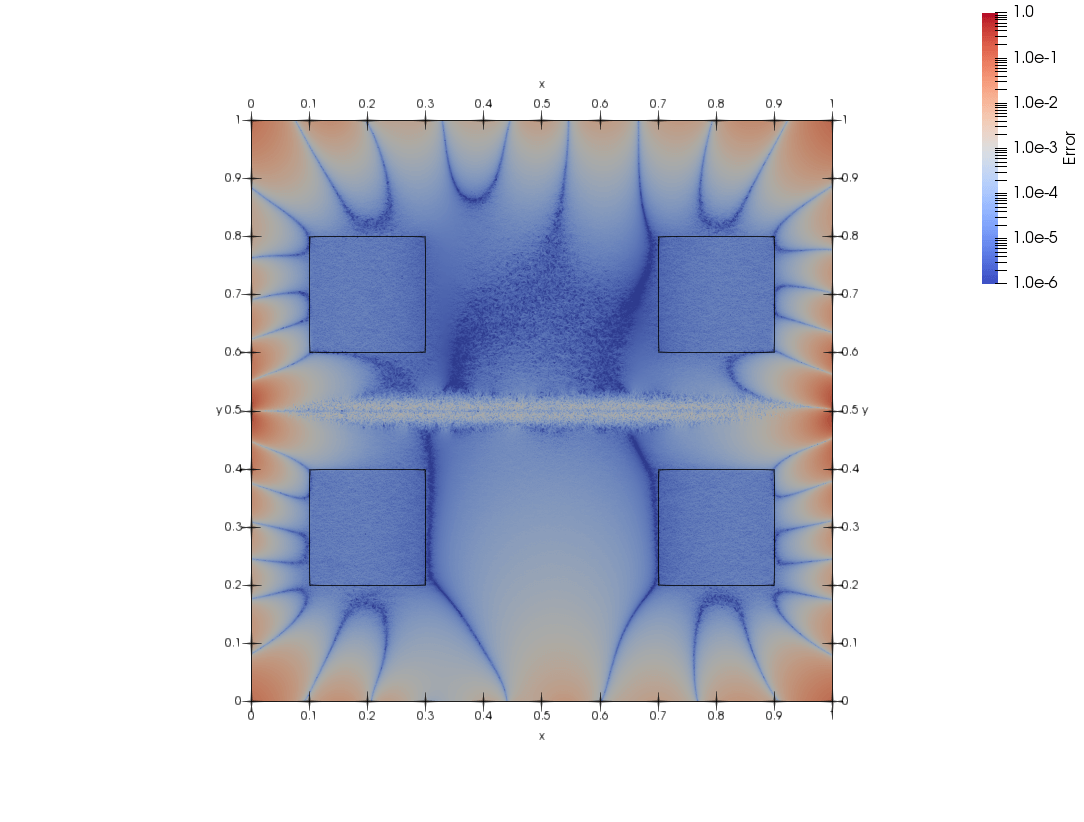}
	\caption{Absolute error in the diffusion-dominated regime, $\mu = 1,\, \beta=(1,0)$. Data given in the four outlined boxes for the solution $u=\sin(3\pi x)+ \tanh(100(y-1/2))$.}
	\label{fig:layer_diffusion}
\end{figure}

\begin{figure}[h]
	\begin{subfigure}{0.48\textwidth}
		\includegraphics[draft=false, width=\textwidth]{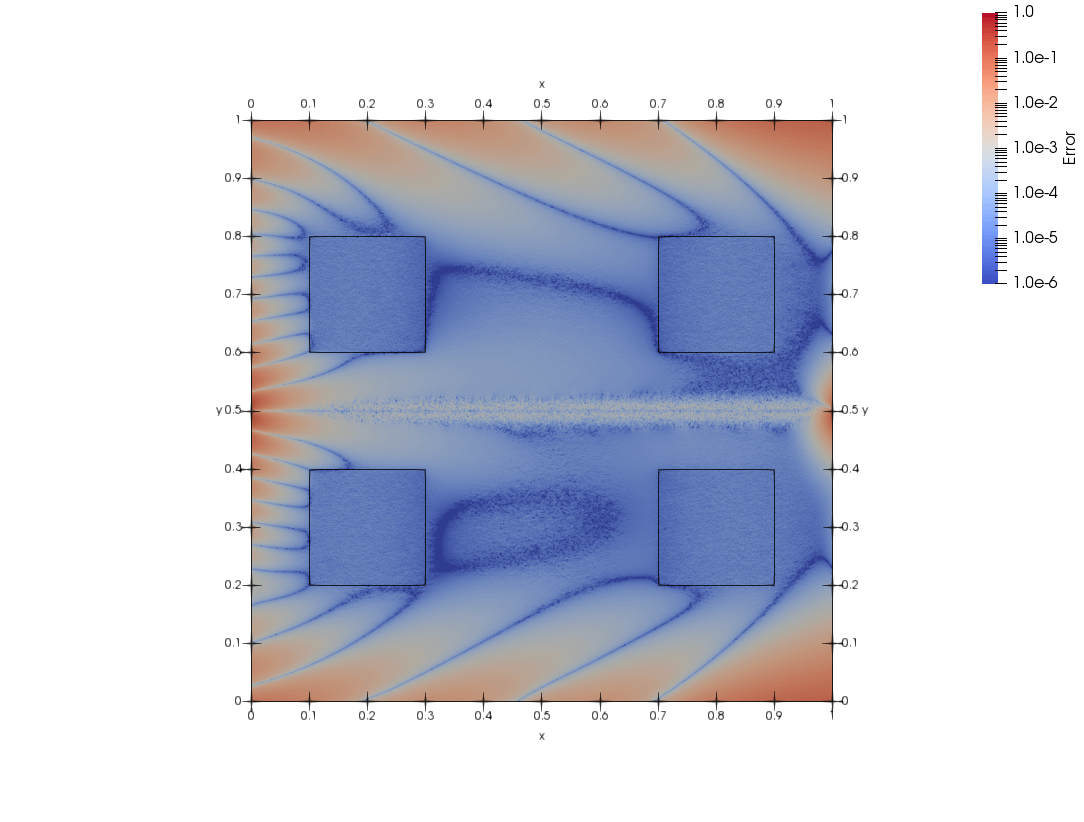}
		\caption{$\mu=10^{-2}.$}
	\end{subfigure}
	\hfill
	\begin{subfigure}{0.48\textwidth}
		\includegraphics[draft=false, width=\textwidth]{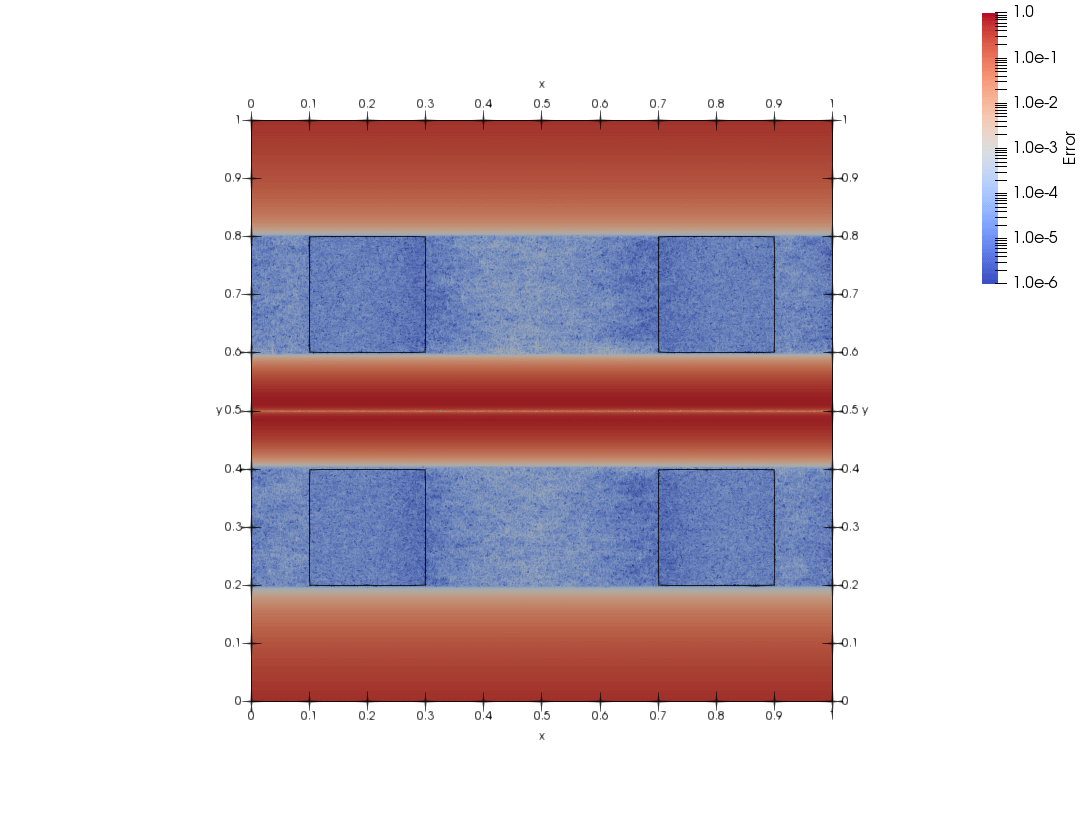}
		\caption{$\mu=10^{-6}.$}
	\end{subfigure}
	\caption{Absolute error in transition to the convection-dominated regime, $\beta=(1,0)$. Data given in the four outlined boxes for the solution $u=\sin(3\pi x)+ \tanh(100(y-1/2))$.}
	\label{fig:layer_convection}
\end{figure}

\clearpage
\bibliographystyle{alpha} 
\bibliography{biblio_part2}

\end{document}